\newcommand\supp[1]{\ensuremath{{\rm supp}\left(#1\right)}}
\def\card{{\#}}
\def\dist{{\rm dist}}
\def\Hdim{\dim_{\rm H}}
\def\Pdim{\dim_{\rm P}}
\def\hau{\mathcal{H}}
\def\leb{\mathcal{L}}
\def\ph{\varphi}
\def\eps{\varepsilon}
\def\osc{{\rm Osc}}
\newcommand{\lic}[2]{\ensuremath{\mathrm{G}^{#1}(#2)}}
\newcommand{\licfalc}[1]{\ensuremath{\mathcal{G}^{#1}}}
\newcommand{\varlic}[2]{\ensuremath{\overline{\mathrm{G}}^{#1}(#2)}}
\newcommand{\varlicfalc}[1]{\ensuremath{\overline{\mathcal{G}}^{#1}}}
\def\R{\mathbb{R}}
\def\N{\mathbb{N}}
\def\Z{\mathbb{Z}}
\def\C{\mathbb{C}}
\def\ZD{\Z^{d}_{\ast}}
\def\Ccal{\mathcal{C}}
\def\Fcal{\mathcal{F}}
\def\Mcal{\mathcal{M}}
\def\Ncal{\mathcal{N}}
\def\Ical{\mathcal{I}}
\def\Scal{\mathcal{S}}
\def\Hcal{\mathcal{H}}
\def\Jcal{\mathcal{J}}
\def\Pcal{\mathcal{P}}
\def\ind{\mathds{1}}
\def\sfrak{\mathfrak{s}}
\def\mfrak{\mathfrak{m}}
\def\Hfrak{\mathfrak{H}}
\newcommand{\diam}[1]{\ensuremath{\left| #1 \right|}}
\def\dd{\mathrm{d}}
\def\ee{\mathrm{e}}
\def\irrd{\Ical^{d}}
\newcommand{\prim}[1]{\ensuremath{\Pcal_{#1}}}
\newcommand{\disc}[1]{\ensuremath{D_{#1}}}
\def\Tau{\mathrm{T}}
\newcommand{\opball}[2]{\mathrm{B}(#1,#2)}
\theoremstyle{definition}
\newtheorem{df}{Definition}
\theoremstyle{plain}
\newtheorem{thm}{Theorem}
\newtheorem{prp}{Proposition}
\newtheorem{lem}{Lemma}
\newtheorem{cor}{Corollary}
\title{Multivariate Davenport series}
\author{Arnaud Durand \and St\'ephane Jaffard}
\address{Laboratoire de Math\'ematiques, UMR 8628\\ Universit\'e Paris-Sud\\ 91405 Orsay Cedex\\ France} 
\email{arnaud.durand@math.u-psud.fr}
\address{Laboratoire d'Analyse et de Math\'ematiques Appliqu\'ees, UMR 8050\\ Universit\'e Paris-est Cr\'eteil Val de Marne\\ 61 avenue du G\'en\'eral de Gaulle\\ 94010 Cr\'eteil Cedex\\ France}
\email{jaffard@u-pec.fr}
\subjclass[2010]{11J83, 11K55, 26A16, 28A78, 28A80, 42C15, 46E35}
\begin{document}

\begin{abstract}
We consider series of the form $\sum a_n \{n\cdot x\}$, where $n\in\Z^{d}$ and $\{x\}$ is the sawtooth function. They are the natural multivariate extension of Davenport series. Their global (Sobolev) and pointwise regularity are studied and their multifractal properties are derived. Finally, we list some open problems which concern the study of these series.
\end{abstract}

\maketitle


\section{Introduction}

Let $\lfloor\,\cdot\,\rfloor$ denote integer part and let $\{\,\cdot\,\}$ be the {\em centered sawtooth function} defined by
\begin{equation}
\{x\}=\begin{cases}
x-\lfloor x\rfloor -\frac{1}{2} & \mbox{if} \;\; x\notin\Z \\
0 & \mbox{else.}
\end{cases}
\end{equation}
The purpose of this paper is to investigate regularity properties of the multivariate functions which are defined by
\begin{equation}\label{eq:dave}
\forall x\in\R^{d} \qquad f(x)=\sum_{n\in\ZD} a_{n}\{n\cdot x\},
\end{equation}
where $n\cdot x$ denotes the standard inner product between the vectors $n$ and $x$, and $(a_{n})_{n\in\ZD}$ is a real valued sequence indexed by the set $\ZD=\Z^{d}\setminus\{0\}$. With a slight abuse, the vectors $n$ for which $a_{n}$ is nonvanishing will be referred to as the {\em frequencies} of the series.

In the one-variable case, examples of such functions can be traced back to the {\em Habilitationsschrift} of Riemann, see~\cite{Kahane:1996qv,Riemann:1868fk}; they were later considered by Hecke~\cite{Hecke:1921ly}, and also Hardy, who studied the series
\begin{equation}\label{eq:defHecke}
\Hfrak_{\beta}(x)=\sum_{n=1}^{\infty} \frac{\{nx\}}{n^{\beta}}.
\end{equation}
It seems however that the general one-dimensional case was first considered only in~1937 by H.~Davenport in~\cite{Davenport:1937yq,Davenport:1937vn}. The first of these papers starts with the following remarkable identity, which establishes in all generality the connexion with Fourier series:
\begin{equation}\label{eq:dav11}
\sum_{n=1}^\infty a_{n} \{nx\}=\sum_{m=1}^\infty c_{m} \sin(2\pi mx)
\quad\mbox{with}\quad
c_{m}=-\frac{1}{\pi m} \sum_{n\in\N \atop n|m} n a_{n}.
\end{equation}

One of the fascinating aspects of these expansions is that their study lies at the crossroad between several areas of mathematics. They appear naturally in several problems related with analytic number theory; this actually was the motivation of H.~Davenport for studying them, see also the recent studies by R.~de la Bret\`eche and G.~Tenenbaum (such as ~\cite{Breteche:2004uq} for instance). They were later considered in connexion with harmonic analysis, see {\em e.g.}~\cite{Jaffard:2004os} and references therein where a function space point of view is developed, and it is shown in which sense an arbitrary one-periodic odd function can be expanded on this system. Convergence properties of these series at particular points are related with the Diophantine approximation properties of these points, see~\cite{Breteche:2004uq,Jaffard:2004os}. Recently, J.~Br\'emont studied the $L^2$ and almost-sure convergence of these series, see \cite{Bremont:2011fk}. The multifractal analysis of these functions shows connexions between their pointwise regularity properties and geometric measure theory, see~\cite{Jaffard:2004os}, and also~\cite{Zhou:2009fk} for an extension of Davenport series with translated phases. Note also that examples of Davenport series valued in $\R^{2}$ were proposed by H.~Lebesgue as space filling functions; this study was developed in~\cite{Jaffard:2009ve,Jaffard:2010qf}, where the connexions between Davenport series and space filling functions are examined.

In this paper, we shall investigate the multivariate case, which has not been considered up to now. Our main motivation is that multivariate Davenport series are natural examples of multifractal fields. The recent increase of interest in such fields is motivated by the relevance of multifractal analysis techniques in image classification, see~\cite{AJW1,AJW2}. Indeed, the validation of 2D multifractal analysis algorithms requires the introduction and the mathematical study of collections of multifractal fields of various kinds. However, very few multivariate multifractal models have been studied up to now (see however the PhD thesis of H.~Oppenheim~\cite{Oppenheim:1997kx} for an early analysis of a multifractal function of several variables where Diophantine approximation properties are involved, and~\cite{Aubry:2002up} for fields generated by random wavelet series). Another case of random fields which have recently been studied are L\'evy fields, which are a natural extension of L\'evy processes to the multivariate setting; their multifractal analysis has recently been performed by the authors, see~\cite{Durand:2010uq}. The scarcity of existing results is partly due to the fact that the derivation of the multifractal properties of multivariate functions lies on variants of ubiquity methods which can prove much more involved in the multidimensional setting. Therefore, extending the collection of available multivariate models, and elucidating their multifractal properties is an important issue. An additional motivation of this paper is to draw a comparison between the multifractal behavior of Davenport series and L\'evy fields. Indeed, they are both constructed as superpositions of piecewise linear functions which display jumps along hyperplanes, the main difference being that the locations of these hyperplanes are random in the case of L\'evy fields, whereas they are determined by arithmetic conditions in the case of Davenport series. We shall see that the multifractal properties of Davenport series bear similarities with those of L\'evy fields, so that they can be seen as a kind of deterministic counterpart of these fields.

The paper is organized as follows. In Section~\ref{sec:relFourierDav}, we establish the relationships between the Davenport and Fourier coefficients of Davenport series, in the normally convergent case. We shall see that this relationship extends to more general, and actually distributional, settings in Section~\ref{sec:sobol}. The main purpose of this paper is the study of pointwise regularity properties of Davenport series. The key step consists in analyzing the locations and magnitudes of the jumps of Davenport series. Preliminary results concerning this study are collected in Sections~\ref{sec:disc} and~\ref{sec:jumpop}. In Section~\ref{sec:upbnd}, an upper bound of the H\"older exponent is derived and, as a consequence, cases where this exponent vanishes everywhere are worked out. A difficult question (which is far from being closed, even in the one-variable case) is to understand when this upper bound is sharp; the purpose of Section~\ref{sec:sparse} is to show that this is the case when the frequencies of the Davenport series are sufficiently sparse. Implications for multifractal analysis are stated in Section~\ref{sec:multifrac}. In Section~\ref{sec:sobol}, we shall consider convergence properties of Davenport series in the Sobolev spaces $H^{s}$ for $s\in\R$, especially when the sequence of coefficients does not belong to $\ell^{1}$; this study will also be the occasion to draw bridges with arithmetic functions in several variables, a topic which has been barely scratched until now (see however~\cite{Breteche:2001ys,Essou} and references therein). Concluding remarks and open problems are collected in Section~\ref{sec:conclud}. Finally, the proofs of the main results are completed in Sections~\ref{sec:proofthmformhold} and~\ref{sec:proofthmmultifrac}. This is the occasion for us to exhibit deep connexions with the theory of sets of large intersection and with the Duffin-Schaeffer and Catlin conjectures in the metric theory of Diophantine approximation, see Section~\ref{subsec:sliLaalpha}.

\section{Relationships between Davenport and Fourier series}\label{sec:relFourierDav}

We start by establishing some conventions. First, note that functions such as~(\ref{eq:dave}) are necessarily odd and $\Z^d$-periodic. Since $\{-x\}=-\{x\}$, it follows that the system supplied by the $\{n\cdot x\} $, for $n\in\Z^d$, is redundant. The choice made for one-dimensional Davenport series is to use only these functions for $n\geq 1$, as {\em e.g.}~in~(\ref{eq:dav11}) above. We shall make a different choice in dimension $d\geq 2$, which will preserve the symmetry of the decomposition. Specifically, we shall keep both functions $\{n\cdot x\}$ and $\{-n\cdot x\} $, and, without loss of generality, we shall assume that the sequence $(a_n)_{n\in\Z^d}$ of Davenport coefficients is an odd sequence indexed by $\Z^{d}$, which implies uniqueness of the decomposition.

The function spaces that we shall consider are composed of $\Z^{d}$-periodic odd functions, and the sequence spaces that we shall consider are composed of odd sequences. Therefore, we shall use the following conventions concerning spaces: With a slight abuse of notations, $\ell^{p}$ will denote the space of odd sequences which belong to $\ell^{p}(\Z^{d})$, $L^{2}$ is the space of odd locally square-integrable functions which are $\Z^d$-periodic, and, more generally, if $E$ is a space of functions defined on $\R^{d}$, we shall also denote by $E$ the space of odd functions that belong locally to $E$ and are $\Z^{d}$-periodic.

Another convention concerns divisibility in several dimensions. Let $n,m\in\ZD$. If $m=ln$ for some $l\in\Z^{\ast}$, we say that $l$ and $n$ are divisors of $m$. The fact that the term ``divisor'' applies without distinction to elements of $\ZD$ and of $\Z^{\ast}$ will not create confusions because the context will always be clear. If $l=\pm 1$ are the only integer divisors of $m$, we say that $m$ is {\em irreducible}; this means that its components are coprime. Throughout the paper, $\N$ denotes the set of positive integers.

Finally, the {\em support} of a sequence $a=(a_{n})_{n\in\Z^{d}}$ is 
\[
\supp{a}=\{n\in\Z^{d} \:|\: a_n\neq 0\}.
\]

Let us now investigate the relationship between Davenport and Fourier series. To this end, let us assume that the series~(\ref{eq:dave}) converges normally, {\em i.e.}~that the sequence $(a_{n})_{n\in\Z^{d}}$ belongs to $\ell^1$ (convergence properties in different functional settings will be investigated in Section~\ref{sec:sobol}). Then, $f$ belongs to $L^{\infty}$, hence to $L^{2}$ and the Fourier series expansion of $f$ converges in $L^{2}$. Since $f$ is odd, it may be written in the form
\[
f(x)=\sum_{m\in\Z^d} c_{m} \sin(2\pi m\cdot x) \quad\mbox{with}\quad c_{m}=\int_{[0,1)^{d}} f(x) \sin(2\pi m\cdot x)\,\dd x.
\]
Here, we adopt the same convention for Fourier series as for Davenport series, that is, we assume that the expansion is taken on all frequencies of $\Z^d$, but that the sequence $(c_{m})_{m\in\Z^d}$ is odd. Since~(\ref{eq:dave}) is normally convergent,
\[
c_{m}=\sum_{n\in\ZD} a_{n} \int_{[0,1)^{d}} \{n\cdot x\} \sin(2\pi m\cdot x) \,\dd x
\]
for all $m\in\ZD$. A straightforward computation shows that the above integral is equal to zero except if $n$ is a divisor of $m$, in which case there exists an integer $l\in\Z^{\ast}$ such that $ln=m$, and the integral is equal to $-1/(2\pi l)$. As a consequence, the Fourier coefficients of $f$ are given by
\begin{equation}\label{eq:davfou}
c_{m}=-\frac{1}{2\pi}\sum_{(l,n)\in\Z^{\ast}\times\ZD \atop ln=m}\frac{a_{n}}{l}.
\end{equation}

Note that without making any assumption on the summability of the sequence $(a_{n})_{n\in\Z^{d}}$, the above formula still enables us to define a sequence $(c_{m})_{m\in\Z^{d}}$. This detour via Fourier series will allow us to study the convergence of the series~(\ref{eq:dave}) even when $(a_{n})_{n\in\Z^{d}}$ does not belong to $\ell^{1}$, see Section~\ref{sec:sobol}. Indeed, we shall see that, in many functional settings, when the associated Fourier series converges, then the partial sums of the series $\sum_{n} a_{n}\{n\cdot x\}$ converge to the same limit.

\section{Discontinuities of Davenport series}\label{sec:disc}

Let us consider a bounded function $g:\R^{d}\to\R$. By definition, the magnitude of the jump of $g$ at any fixed point $x_{0}\in\R^{d}$ is
\[
\Delta_{g}(x_{0})=\limsup_{x\to x_{0}} g(x) - \liminf_{x\to x_{0}} g(x)
\]
(which may possibly vanish, in which case $g$ is continuous at $x_{0}$). The magnitude of the jumps can also be expressed by means of local oscillations. To be specific, let us recall that the {\em oscillation} of the function $g$ on a bounded subset $\Omega$ of $\R^d$ is defined by
\[
\osc_{g}(\Omega)=\sup_{x\in\Omega} g(x) - \inf_{x\in\Omega} g(x).
\]
Letting $\opball{x}{r}$ denote the open ball with center $x$ and radius $r$, it is easy to see that the magnitude of the jump of the function $g$ at the point $x_{0}$ satisfies
\[
\Delta_{g}(x_{0})=\lim_{r\to 0} \osc_{g}(\opball{x_{0}}{r}).
\]
We shall now determine the set of points at which the Davenport series $f$ defined by~(\ref{eq:dave}) has a discontinuity, and we shall study the magnitude of the corresponding jump. We assume in what follows that the sequence $(a_{n})_{n\in\Z^{d}}$ belongs to $\ell^{1}$.

Given a vector with integer coordinates $q\in\ZD$ and an integer $p\in\Z$, let $H_{p,q}$ denote the hyperplane
\begin{equation}\label{eq:defHpq}
H_{p,q}=\{x\in\R^{d} \:|\: p=q\cdot x\}.
\end{equation}
It is clear that multiplying $p$ and the components of $q$ by a common integer value leaves the hyperplane unchanged. In order
 to ensure the uniqueness of the representation, it is sufficient to assume that $p$ and the components of $q$ are coprime and that $q$ belongs to the subset $\Z^{d}_{+}$ of $\ZD$ formed by the vectors whose first nonvanishing coordinate is positive. In fact, one easily checks that any hyperplane $H_{p,q}$ may be indexed in a unique manner by a pair $(p,q)$ that belongs to
\[
\Hcal_{d}=\left\{(p,q)\in\Z\times\Z^{d}_{+} \:|\: \gcd(p,q)=1\right\},
\]
where $\gcd(p,q)$ is the greatest common divisor of the integer $p$ and the components of the vector $q$. Furthermore, let $\{\,\cdot\,\}_{\star}$ denote the restriction of the sawtooth function $\{\,\cdot\,\}$ to the open interval $(-1/2,1/2)$. Then, $\{\,\cdot\,\}_{\star}$ is continuous everywhere except at the origin: $\{x\}_{\star}$ makes a jump of size $-1$ when $x$ crosses zero in the upward direction. In addition, $\{x\}$ is the sum of $\{x-p\}_{\star}$ over all the integers $p\in\Z$. Along with the fact that the sequence $(a_{n})_{n\in\Z^{d}}$ and the function $\{\,\cdot\,\}_{\star}$ are both odd, this enables us to rewrite the definition~(\ref{eq:dave}) of the Davenport series $f$ in the form
\[
f(x)=\sum_{(p,q)\in\Hcal_{d}} f_{p,q}(x)
\qquad\mbox{with}\qquad
f_{p,q}(x)=2\sum_{l=1}^{\infty} a_{lq}\{l(q\cdot x-p)\}_{\star},
\]
where all the series converge normally. This decomposition enlightens the fact that $f$ is the superposition of a family of functions that are continuous everywhere except maybe on a specific hyperplane of the above kind. To be precise, each function $f_{p,q}$ is continuous everywhere except maybe on the hyperplane $H_{p,q}$, and the fact that $(p,q)$ belongs to $\Hcal_{d}$ implies that these hyperplanes are distinct. Moreover, when a point $x$ crosses $H_{p,q}$, the real points $l(q\cdot x-p)$, for $l\geq 1$, all cross zero, so that $f_{p,q}(x)$ makes a jump of magnitude $|A_{q}|$, where
\begin{equation}\label{eq:defAq}
A_{q}=2\sum_{l=1}^{\infty} a_{lq}.
\end{equation}
Note that, in the case where the latter sum vanishes, the function $f_{p,q}$ is actually continuous on the whole space, including the hyperplane $H_{p,q}$.

The analysis of the discontinuities of the Davenport series $f$ begins with a first remark: As the series~(\ref{eq:dave}) is normally convergent, its sum $f$ is a function in $L^{\infty}$, so that the potential discontinuities must have finite magnitude. We shall now show that the set of points at which $f$ is not continuous is exactly
\begin{equation}\label{eq:defdisc}
\bigcup_{(p,q)\in\Hcal_{d} \atop A_{q}\neq 0} H_{p,q}.
\end{equation}
First, note that, if a point $x_{0}$ does not belong to the latter set, then the above decomposition entails that $f$ is a sum of uniformly convergent series of functions that are continuous at $x_{0}$, thereby being continuous at $x_{0}$ as well. Conversely, if a point $x_{0}$ belongs to a hyperplane $H_{p,q}$ indexed by a pair $(p,q)\in\Hcal_{d}$ for which $A_{q}$ does not vanish, and to no other hyperplane of that form (which is the case of Lebesgue-almost every point of $H_{p,q}$), then $f$ has a discontinuity at $x_{0}$ of magnitude $|A_{q}|$, that is,
\[
\Delta_{f}(x_{0})=|A_{q}|>0.
\]
More generally, suppose that $x_0$ belongs to a (possibly infinite) collection of hyperplanes $H_{p_{i},q_{i}}$ indexed by pairs $(p_{i},q_{i})\in\Hcal_{d}$ for which $A_{q_{i}}$ do not vanish. The previous case shows that, for any specific value of $i$, the function $f$ has a discontinuity of magnitude exactly $|A_{q_{i}}|$ on a dense set of points of $H_{p_{i},q_{i}}$. Therefore, one can pick a point $y_{i}$ arbitrarily close to $x_{0}$ at which $f$ has a discontinuity of magnitude exactly $|A_{q_{i}}|$. It follows that
\[
\Delta_{f}(x_{0})\geq\max_{i}|A_{q_{i}}|>0,
\]
so that $f$ exhibits a discontinuity at $x_{0}$.

Given that $|A_{q}|$ is the magnitude of the jump of the Davenport series $f$ at Lebesgue-almost every point of the hyperplane $H_{p,q}$, we shall call with a slight abuse $|A_{q}|$ the magnitude of the jump of $f$ on $H_{p,q}$.

We see here a sharp contrast with Fourier series: The series~(\ref{eq:dave}) will usually exhibit discontinuities no matter how fast the coefficients $a_{n}$ decay. The following proposition shows that even more is true: The zero function is the only continuous Davenport series.

\begin{prp}
Let $f$ be a Davenport series with coefficients given by a sequence $a=(a_{n})_{n\in\Z^{d}}$ in $\ell^{1}$. If $f$ is a continuous function, then
\[
\forall n\in\Z^{d} \qquad a_{n}=0.
\]
\end{prp}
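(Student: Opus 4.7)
The strategy is to convert the continuity hypothesis, via the discontinuity analysis just completed, into countably many linear relations among the $a_{n}$, and then extract each $a_{n}$ from these relations by a truncated M\"obius inversion.

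\emph{Step 1: Extracting linear relations.} For every $q\in\Z^{d}_{+}$ the pair $(1,q)$ belongs to $\Hcal_{d}$, since $\gcd(1,q_{1},\dots,q_{d})=1$. The description of the discontinuity set of $f$ obtained above therefore forces
\[
A_{q}\;=\;2\sum_{l=1}^{\infty}a_{lq}\;=\;0\qquad\text{for every }q\in\Z^{d}_{+}.
\]
Fix an irreducible $q_{0}\in\Z^{d}_{+}$ and set $\alpha_{l}:=a_{lq_{0}}$, so that $(\alpha_{l})\in\ell^{1}(\N)$. Since $mq_{0}\in\Z^{d}_{+}$ for every $m\in\N$, the identity above specialises to
\[
\sum_{l=1}^{\infty}\alpha_{lm}\;=\;0\qquad\text{for every }m\in\N.
\]
It suffices to show that any such sequence vanishes identically: any $n\in\ZD$ can be written as $\pm kq_{0}$ for some irreducible $q_{0}\in\Z^{d}_{+}$ and some $k\in\N$, so $a_{n}=\pm\alpha_{k}$ by oddness of $(a_{n})$.

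\emph{Step 2: Truncated M\"obius inversion.} Formally, the desired conclusion would follow from the M\"obius identity $\alpha_{m}=\sum_{d\geq 1}\mu(d)\sum_{l\geq 1}\alpha_{ldm}$ (with $\mu$ the M\"obius function), but the interchange of summations behind this identity is not automatic from $\ell^{1}$-summability alone, since it would require summability of $d(n)|\alpha_{n}|$ with $d(\cdot)$ the unbounded divisor function. I would instead argue by finite inclusion-exclusion followed by a tail estimate. Let $p_{1}<p_{2}<\cdots$ enumerate the primes and fix $k,m\in\N$. Expanding the indicator of coprimality on the prime divisors of $l$ lying in $\{p_{1},\dots,p_{k}\}$ and then invoking Step~1 for each $m'=m\prod T$ gives
\[
\sum_{\substack{l\geq 1\\ \gcd(l,\,p_{1}\cdots p_{k})=1}}\alpha_{lm}\;=\;\sum_{T\subseteq\{p_{1},\dots,p_{k}\}}(-1)^{|T|}\sum_{l\geq 1}\alpha_{lm\prod T}\;=\;0.
\]
The left-hand side differs from $\alpha_{m}$ only by the contribution of indices $l\geq 2$ coprime to $p_{1}\cdots p_{k}$; every such $l$ satisfies $l\geq p_{k+1}$, so this remainder is bounded in absolute value by the $\ell^{1}$-tail $\sum_{l\geq p_{k+1}}|\alpha_{lm}|$, which tends to zero as $k\to\infty$. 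Hence $\alpha_{m}=0$ for every $m$, and the proposition follows.

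The main obstacle is precisely Step~2: the natural M\"obius inversion cannot be applied directly under the $\ell^{1}$ hypothesis alone, and the finite-truncation trick is what allows us to get by with only finite rearrangements together with a genuine $\ell^{1}$ tail estimate.
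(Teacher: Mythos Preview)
Your proof is correct. Step~1 coincides with the paper's reduction: continuity forces $A_{q}=0$ for all $q$, and restricting to multiples of a fixed irreducible $q_{0}$ yields a one-dimensional $\ell^{1}$ sequence $(\alpha_{l})$ with $\sum_{l\geq 1}\alpha_{lm}=0$ for every $m\in\N$.

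The difference lies in Step~2. The paper simply quotes a classical result of Haar (P\'olya--Szeg\H{o}, Chapter~1, no.~129): any $\ell^{1}$ sequence satisfying those relations must vanish identically. You instead reprove this result from scratch, and your argument is in fact one of the standard proofs of Haar's theorem. The finite inclusion-exclusion over the first $k$ primes is exactly what circumvents the obstruction you correctly identify, namely that the full M\"obius inversion $\alpha_{m}=\sum_{d\geq 1}\mu(d)\sum_{l\geq 1}\alpha_{ldm}$ would require control of $\sum_{n}\tau(n)|\alpha_{n}|$ rather than mere $\ell^{1}$ summability. Restricting to finitely many primes keeps the outer sum finite (so the interchange is trivially justified by absolute convergence), while any surviving index $l\geq 2$ coprime to $p_{1}\cdots p_{k}$ is forced into the tail $l\geq p_{k+1}$, which vanishes in the limit. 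Your write-up is thus more self-contained than the paper's, at the cost of a few extra lines; the paper's version is shorter but relies on locating the reference.
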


\begin{proof}
The continuity of $f$ implies that $A_{q}=0$ for all vectors $q\in\Z^{d}$. Given an irreducible vector $q$, let $b^{q}_{l}=a_{lq}$ for any integer $l\geq 1$. Then, the sequence $(b^{q}_{l})_{l\geq 1}$ is in $\ell^{1}(\N)$ and satisfies
\[
\forall l\geq 1 \qquad \sum_{k=1}^{\infty} b^{q}_{kl} =0.
\]
Haar proved that these conditions imply that $b^{q}_{l}=0$ for all $l\geq 1$, see~\cite[Chapter~1, no.~129]{Polya:1972yq}. This argument holds in all directions $q$, so the result follows.
\end{proof}

We refer to the next section for more general results that explain how to recover the coefficients $a_{n}$ from the values $A_{q}$.

\section{The jump operator}\label{sec:jumpop} 

In order to study the regularity properties of normally convergent Davenport series, it is useful to consider the linear operator $J$ which maps the sequence of Davenport coefficients to the sequence of jumps, and which is defined by
\begin{equation}\label{eq:defJ}
\forall (a_{n})_{n\in\Z^{d}}\in\ell^{1} \qquad J\left((a_{n})_{n\in\Z^{d}}\right)=(A_{q})_{q\in\Z^{d}}\in\ell^{\infty},
\end{equation}
where the coefficients $A_{q}$ are given by~(\ref{eq:defAq}). The key results concerning this mapping follow from those obtained in~\cite{Jaffard:2004os} in the one-dimensional case; this is due to a remarkable decomposition that we now present.

Let $\irrd$ denote the subset of $\ZD$ formed by the irreducible vectors and let $V_{m}$ denote the vector space of odd sequences $(a_{n})_{n\in\Z^{d}}$ that are supported by the multiples of such a vector $m\in\irrd$, {\em i.e.}~such that
\[
\supp{(a_{n})_{n\in\Z^{d}}}\subseteq \Z\,m. 
\]
Any odd sequence indexed by $\Z^{d}$ may be decomposed as a sum of sequences $b^{m}$ such that $b^{m}\in V_{m}$. However, as the vector subspaces $V_{m}$ and $V_{-m}$ coincide, in order to ensure the uniqueness of the decomposition, we shall privilege the irreducible vectors whose first nonvanishing coordinate is positive. The set of those vectors is therefore $\irrd_{+}=\irrd\cap\Z^{d}_{+}$. As a consequence, we obtain the following unconditional Schauder decomposition:
\begin{equation}\label{eq:ell1oplus}
\ell^{1}=\bigoplus_{m\in\irrd_{+}}\left(V_{m}\cap\ell^{1}\right),
\end{equation}
meaning that any sequence in $\ell^{1}$ may be written in a unique manner as the sum in the $\ell^{1}$ sense of an unconditionally summable family indexed by $m\in\irrd_{+}$ of sequences in $V_{m}\cap\ell^{1}$. Moreover, the operator $J$ maps the subspace $V_{m}\cap\ell^{1}$ to $V_{m}\cap\ell^{\infty}$. It follows that, in order to study $J$, it suffices to analyze its restriction $J_{m}$ to the subspace $V_{m}\cap\ell^{1}$, for any fixed vector $m\in\irrd_{+}$.

On top of that, let $S_{m}$ denote the operator of subsampling with step $m$, which is defined by
\[
S_{m}((a_{n})_{n\in\Z^d})=(a_{lm})_{l\geq 1}
\]
for any odd sequence $(a_{n})_{n\in\Z^d}$. As we consider odd sequences only, it is clear that the restriction of $S_{m}$ to $V_{m}$ is one-to-one. In fact, we even see that $S_{m}$ maps $V_{m}\cap\ell^{p}$ onto $\ell^{p}(\N)$.

In the one-dimensional case, as already mentioned above and illustrated by~(\ref{eq:dav11}), one assumes that the sequence of Davenport coefficients is supported on $\N$, instead of supposing that they form an odd sequence indexed by $\Z$. Thus, the operator $J$ has a simpler counterpart which has already been considered in~\cite{Jaffard:2004os}; this is the {\em jump operator} $\Jcal$ defined by
\begin{equation}\label{eq:defJcal}
\Jcal\left((b_{n})_{n\geq 1}\right)=\left(\sum_{l=1}^{\infty}b_{lq}\right)_{q\geq 1}\in\ell^{\infty}(\N),
\end{equation}
for any sequence $(b_{n})_{n\geq 1}\in\ell^{1}(\N)$. The following straightforward lemma shows that all the mappings $J_{m}$ essentially reduce to $\Jcal$.

\begin{lem}\label{lem:SmJm}
Let us consider a vector $m\in\irrd_{+}$. Then, for any sequence $a=(a_{n})_{n\in\Z^{d}}$ in $V_{m}\cap\ell^{1}$, 
\[
S_{m}(J_{m}(a))=2\Jcal(S_{m}(a)).
\]
Therefore, the following diagram is commutative:
\[
\xymatrix{
    V_{m}\cap\ell^{1} \ar[r]^{J_{m}} \ar[d]^{S_{m}}_{\sim}  & V_{m}\cap\ell^{\infty} \ar[d]^{S_{m}}_{\sim} \\
    \ell^{1}(\N) \ar[r]^{2\Jcal} & \ell^{\infty}(\N)
}
\]
\end{lem}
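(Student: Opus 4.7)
The plan is to verify the identity $S_m(J_m(a)) = 2\Jcal(S_m(a))$ by direct calculation from the definitions; the commutativity of the diagram will then be immediate, since the paragraph preceding the lemma already noted that $S_m$ restricts to a bijection $V_m \cap \ell^p \to \ell^p(\N)$ for $p \in \{1, \infty\}$, so the two horizontal arrows are intertwined by these bijections.

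First I would unpack the left-hand side. By definition of the jump operator, $J_m(a) = (A_q)_{q \in \Z^d}$ with $A_q = 2 \sum_{l=1}^{\infty} a_{lq}$. Applying the subsampling $S_m$ then extracts the values at $q = lm$, so the $l$-th entry of $S_m(J_m(a))$ equals
\[
A_{lm} = 2 \sum_{k=1}^{\infty} a_{klm}.
\]
Next I would unpack the right-hand side. Setting $b = S_m(a)$, so that $b_l = a_{lm}$ for $l \geq 1$, the definition of $\Jcal$ from~(\ref{eq:defJcal}) gives $\Jcal(b)_l = \sum_{k=1}^{\infty} b_{kl} = \sum_{k=1}^{\infty} a_{klm}$, and doubling matches the expression for $A_{lm}$. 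Entry-by-entry equality follows.

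Since $a \in \ell^{1}$, every series above converges absolutely, so no convergence or rearrangement issue arises; the statement is advertised as ``straightforward'' and is really just bookkeeping. The one point I would pause to check is the implicit consistency claim that $J_m(a)$ lies in $V_m$, so that $S_m$ is entitled to act on it and loses no information: this holds because $a$ is supported on $\Z m$ and $m$ is irreducible, forcing $A_q$ to vanish whenever $q$ is not an integer multiple of $m$. With that in hand, the identity and the commutativity of the displayed square both follow, and there is no genuine obstacle to surmount.
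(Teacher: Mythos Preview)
Your proof is correct and matches what the paper intends: the lemma is stated there without proof and labeled ``straightforward,'' and your direct unwinding of the definitions of $J$, $S_{m}$, and $\Jcal$ is exactly the bookkeeping the authors have in mind. Your verification that $J_{m}(a)\in V_{m}$ via irreducibility of $m$ is the only point requiring a moment's thought, and you handle it correctly.
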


It follows from Lemma~\ref{lem:SmJm} that the mapping $J$ can be inverted on each subspace $V_{m}\cap\ell^{1}$ by means of an inversion formula for the one-dimensional jump operator $\Jcal$. This formula has been obtained in~\cite{Jaffard:2004os} and is recalled in the statement of Proposition~\ref{prp:invJcalTau} below. It makes use of the {\em M\"obius function} $\mu$, which is defined on the positive integers by $\mu(n)=0$ if $n$ is not square-free, and by $\mu(n)=(-1)^{k}$ if $n$ is square-free and admits exactly $k$ prime divisors. The inversion formula holds on the subspace $\Tau(\N)$ of $\ell^{1}(\N)$ that is formed by the sequences $(b_{n})_{n\geq 1}$ for which the series $\sum_{n}\tau(n)|b_{n}|$ converges, where $\tau(n)$ denotes the number of divisors of $n$; this merely means that the restriction of $\Jcal$ to that subspace is one-to-one. It is well-known that the sequence $\tau(n)$ grows slower than any positive power of $n$, in the sense that $\tau(n)={\rm o}(n^{\eps})$ as $n$ goes to infinity, for all $\eps>0$. This is a plain consequence of the fact that
\begin{equation}\label{eq:asymptau}
\limsup_{n\to\infty}\frac{\log\log n}{\log n}\log\tau(n)=\log 2,
\end{equation}
see {\em e.g.}~\cite[Theorem~13.12]{Apostol:1976uq}. This implies in particular that, for any real $\gamma$ larger than one, $\Tau(\N)$ contains the space $\Fcal^{\gamma}(\N)$ of all the sequences $b=(b_{n})_{n\geq 1}$ such that
\[
|b|_{\Fcal^{\gamma}(\N)}=\sup_{n\geq 1} n^{\gamma}|b_{n}|<\infty.
\]
Thus, the restriction of $\Jcal$ to each $\Fcal^{\gamma}(\N)$ is one-to-one. The next result even shows that $\Jcal$ is a bicontinuous automorphism of $\Fcal^{\gamma}(\N)$. We refer to~\cite{Jaffard:2004os} for its proof.

\begin{prp}\label{prp:invJcalTau}
The operator $\Jcal$ induces a one-to-one mapping from $\Tau(\N)$ into $\ell^{1}(\N)$. More specifically, for any sequence $B=(B_{q})_{q\geq 1}$ in the image set $\Jcal(\Tau(\N))$, the equation
\[
B=\Jcal(b)
\]
admits exactly one solution $b=(b_{n})_{n\geq 1}$ in $\Tau(\N)$, namely, the sequence defined by
\begin{equation}\label{eq:invJcal}
\forall n\geq 1 \qquad b_{n}=\sum_{l=1}^\infty \mu(l) B_{ln}.
\end{equation}
Moreover, for any real $\gamma>1$, the operator $\Jcal$ induces a bicontinuous automorphism of the space $\Fcal^{\gamma}(\N)$ whose inverse is given by~(\ref{eq:invJcal}).
\end{prp}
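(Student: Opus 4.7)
The strategy is pure M\"obius inversion: I would formally verify that the composition of $\Jcal$ with the operator
\[
(B_q)_{q\geq 1}\longmapsto\Bigl(\sum_{l=1}^\infty\mu(l)B_{ln}\Bigr)_{n\geq 1}
\]
reduces to the identity thanks to the relation $\sum_{l\mid m}\mu(l)=\ind_{\{m=1\}}$, and then justify every summation and every interchange of sums by means of the working hypothesis.

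My first step is to check that $\Jcal$ actually sends $\Tau(\N)$ into $\ell^{1}(\N)$: for $b\in\Tau(\N)$,
\[
\sum_{q=1}^\infty|B_q|\leq\sum_{q=1}^\infty\sum_{l=1}^\infty|b_{lq}|=\sum_{n=1}^\infty\tau(n)|b_n|<\infty,
\]
since each $n\geq 1$ arises in exactly $\tau(n)$ pairs $(l,q)$ with $n=lq$; in particular every $B_q$ is well defined as an absolutely convergent series. Next I would plug $B_{ln}=\sum_{k\geq 1}b_{kln}$ into $\sum_{l\geq 1}\mu(l)B_{ln}$ and swap the summations. The double series $\sum_{l,k}|\mu(l)|\,|b_{kln}|$ is dominated termwise by $\sum_{m\geq 1}\tau(m)|b_{mn}|\leq\sum_{m\geq 1}\tau(m)|b_m|<\infty$, so Fubini applies; grouping the pair $(l,k)$ by $m=lk$ and applying the M\"obius identity collapses the inner sum to $\ind_{\{m=1\}}$ and leaves only $b_n$. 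This simultaneously establishes the inversion formula~(\ref{eq:invJcal}) and the injectivity of $\Jcal$ on $\Tau(\N)$.

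For the statement concerning $\Fcal^{\gamma}(\N)$ with $\gamma>1$, I would start from~(\ref{eq:asymptau}), which gives $\tau(n)=\mathrm{o}(n^{\eps})$ for every $\eps>0$; fixing $0<\eps<\gamma-1$ yields $\sum_{n\geq 1}\tau(n)n^{-\gamma}<\infty$, so $\Fcal^{\gamma}(\N)\subset\Tau(\N)$ and the preceding step already applies. The one-line estimate
\[
|B_q|\leq|b|_{\Fcal^{\gamma}(\N)}\sum_{l=1}^{\infty}(lq)^{-\gamma}=\zeta(\gamma)\,|b|_{\Fcal^{\gamma}(\N)}\,q^{-\gamma}
\]
shows that $\Jcal$ sends $\Fcal^{\gamma}(\N)$ continuously into itself; running the identical bound on the right-hand side of~(\ref{eq:invJcal}) yields $|b_n|\leq\zeta(\gamma)\,|B|_{\Fcal^{\gamma}(\N)}\,n^{-\gamma}$. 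This gives continuity of the inverse map and, since the candidate preimage of any $B\in\Fcal^{\gamma}(\N)$ produced by~(\ref{eq:invJcal}) automatically lies in $\Fcal^{\gamma}(\N)\subset\Tau(\N)$, also surjectivity on $\Fcal^{\gamma}(\N)$.

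The only genuine point of vigilance is the Fubini step in the central computation; this is precisely where the class $\Tau(\N)$ enters essentially, because the combinatorial weight $\tau(n)$ counts the number of ways $n$ factors as $lk$, and it is exactly this weight that must be finite after summation in order to swap the $\mu$-sum with the $\Jcal$-sum. Every remaining step is either an elementary estimate, an application of $\sum_{l\mid m}\mu(l)=\ind_{\{m=1\}}$, or an invocation of~(\ref{eq:asymptau}).
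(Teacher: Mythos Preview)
Your argument is correct and is the natural M\"obius-inversion proof; the paper does not give its own proof of this proposition but refers to~\cite{Jaffard:2004os}, where the same approach is used. Two small points worth tightening: the inequality $\sum_{m\geq 1}\tau(m)|b_{mn}|\leq\sum_{m\geq 1}\tau(m)|b_m|$ deserves one line of justification (use $\tau(m)\leq\tau(mn)$ and reindex by $j=mn$), and for surjectivity of $\Jcal$ on $\Fcal^{\gamma}(\N)$ you have only verified that the M\"obius operator is a \emph{left} inverse of $\Jcal$; you still need $\Jcal\circ\text{M\"obius}=\mathrm{id}$ on $\Fcal^{\gamma}(\N)$, which follows by the identical Fubini computation with the roles of the two operators exchanged.
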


Thanks to Lemma~\ref{lem:SmJm}, Proposition~\ref{prp:invJcalTau} naturally extends to the multivariate setting. In fact, we now establish that the higher-dimensional jump operator $J$ induces a bicontinuous automorphism on the space $\Fcal^{\gamma}$ defined as follows.

\begin{df}
The space $\Fcal^{\gamma}$ is the vector space composed of the odd sequences $a=(a_{n})_{n\in\Z^{d}}$ satisfying
\[
|a|_{\Fcal^{\gamma}}=\sup_{n\in\ZD} |n|^{\gamma}|a_{n}|<\infty.
\]
\end{df} 

If $\gamma$ is larger than the dimension $d$ of the ambient space, it is clear that $\Fcal^{\gamma}$ may be seen as a vector subspace of $\ell^{1}$, so that the operator $J$ is well-defined on $\Fcal^{\gamma}$. In the opposite case, $\Fcal^{\gamma}$ is not necessarily included in $\ell^{1}$. However, if $\gamma>1$, the formula~(\ref{eq:defJ}) still has a meaning, because all the series~(\ref{eq:defAq}) converge, which enables us to define the operator $J$ on $\Fcal^{\gamma}$ as well. This may also be seen as a consequence of Lemma~\ref{lem:SmJm}, along with the fact that $S_{m}(\Fcal^{\gamma})$ is contained in $\ell^{1}(\N)$.

\begin{prp}\label{prp:equivdec}
For any $\gamma>1$, the jump operator $J$ is a bicontinuous automorphism of $\Fcal^{\gamma}$, and its inverse is given by
\begin{equation}\label{eq:invJ}
\forall A=(A_{q})_{q\in\Z^{d}}\in\Fcal^{\gamma} \qquad J^{-1}(A)=\left(\frac{1}{2}\sum_{l=1}^{\infty}\mu(l)A_{ln}\right)_{n\in\Z^{d}}.
\end{equation}
\end{prp}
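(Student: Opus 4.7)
The plan is to reduce the multivariate statement to the one-dimensional Proposition~\ref{prp:invJcalTau} by means of the commutative diagram of Lemma~\ref{lem:SmJm}, transferring bicontinuity from $\Jcal$ to $J_{m}$ for each $m\in\irrd_{+}$ with constants that do not depend on $m$, and then assembling the pieces along the decomposition indexed by irreducible directions.

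First I would check that $J$ is well-defined on $\Fcal^{\gamma}$: for any $q\in\ZD$, the bound $|a_{lq}|\leq|a|_{\Fcal^{\gamma}}(l|q|)^{-\gamma}$ makes the series~(\ref{eq:defAq}) absolutely convergent and gives $|q|^{\gamma}|A_{q}|\leq 2\zeta(\gamma)|a|_{\Fcal^{\gamma}}$, so $J$ already maps $\Fcal^{\gamma}$ continuously into itself. Next, since $A_{q}$ depends only on the coordinates of $a$ indexed by multiples of the unique $m\in\irrd_{+}$ with $q\in\Z m$, the operator $J$ preserves each subspace $V_{m}\cap\Fcal^{\gamma}$; denoting by $J_{m}$ its restriction, one has $J(a)=\sum_{m\in\irrd_{+}}J_{m}(b^{m})$ coordinatewise, where $b^{m}\in V_{m}\cap\Fcal^{\gamma}$ is the part of $a$ supported on $\Z m$.

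The key observation is that $S_{m}$ restricts to a bijection $V_{m}\cap\Fcal^{\gamma}\to\Fcal^{\gamma}(\N)$ satisfying $|a|_{\Fcal^{\gamma}}=|m|^{\gamma}|S_{m}(a)|_{\Fcal^{\gamma}(\N)}$, and the same relation holds on the codomain side. Combined with Lemma~\ref{lem:SmJm}, the factors $|m|^{\gamma}$ cancel, so Proposition~\ref{prp:invJcalTau} (saying $\Jcal$ is a bicontinuous automorphism of $\Fcal^{\gamma}(\N)$) transfers to a uniform estimate: there exists $C=C(\gamma)>0$, independent of $m$, with $C^{-1}|a|_{\Fcal^{\gamma}}\leq|J_{m}(a)|_{\Fcal^{\gamma}}\leq C|a|_{\Fcal^{\gamma}}$ for every $a\in V_{m}\cap\Fcal^{\gamma}$.

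To conclude, I would use the identities $|a|_{\Fcal^{\gamma}}=\sup_{m\in\irrd_{+}}|b^{m}|_{\Fcal^{\gamma}}$ and $|J(a)|_{\Fcal^{\gamma}}=\sup_{m\in\irrd_{+}}|J_{m}(b^{m})|_{\Fcal^{\gamma}}$; taking suprema in the uniform bicontinuity of the $J_{m}$'s gives the bicontinuity of $J$ on $\Fcal^{\gamma}$. The inversion formula~(\ref{eq:invJ}) then follows by applying~(\ref{eq:invJcal}) coordinate-by-coordinate: inverting the diagram of Lemma~\ref{lem:SmJm} yields $S_{m}\circ J_{m}^{-1}=\frac{1}{2}\Jcal^{-1}\circ S_{m}$, so for $n=lm$ with $l\geq 1$ and $m\in\irrd_{+}$ one reads off $J^{-1}(A)_{n}=\frac{1}{2}\sum_{k\geq 1}\mu(k)A_{kn}$, and oddness of the sequences extends the identity to every $n\in\ZD$. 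The only point that requires some attention is the cancellation of the $|m|^{\gamma}$ factors in the uniform estimate---once that is noted, no single step is deep, since the analytic content has been absorbed into Proposition~\ref{prp:invJcalTau}.
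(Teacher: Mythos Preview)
Your proposal is correct and follows essentially the same route as the paper: reduce to the one-dimensional operator $\Jcal$ via Lemma~\ref{lem:SmJm} and the direction-by-direction decomposition, invoke Proposition~\ref{prp:invJcalTau}, and read off the inversion formula~(\ref{eq:invJ}) from the diagram. The only minor organizational difference is that the paper establishes continuity of $J^{-1}$ by a direct estimate on the explicit M\"obius formula (yielding operator norm at most $\zeta(\gamma)/2$), whereas you transfer the uniform bicontinuity of the $J_{m}$'s from $\Jcal$ via the scaling identity $|a|_{\Fcal^{\gamma}}=|m|^{\gamma}|S_{m}(a)|_{\Fcal^{\gamma}(\N)}$ and then take a supremum; both arguments are equivalent in content.
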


\begin{proof}
Given $\gamma>1$, let $a=(a_{n})_{n\in\Z^{d}}$ be a sequence in $\Fcal^{\gamma}$, and let $A=(A_{q})_{q\in\Z^{d}}$ denote its image under $J$, that is, $A=J(a)$. Then, for each vector $q\in\ZD$,
\[
|A_{q}|\leq 2\sum_{l=1}^{\infty} |a_{lq}|\leq 2\sum_{l=1}^{\infty}\frac{|a|_{\Fcal^{\gamma}}}{|lq|^{\gamma}} \leq\frac{2\zeta(\gamma)}{|q|^{\gamma}}|a|_{\Fcal^{\gamma}},
\]
where $\zeta$ is the Riemann zeta function. Therefore, $J$ is continuous on $\Fcal^{\gamma}$ with operator norm at most $2\zeta(\gamma)$.

In order to study the invertibility of the operator $J$ on $\Fcal^{\gamma}$, let us begin by observing that, in a way similar to~(\ref{eq:ell1oplus}), the latter space may be decomposed as the direct sum over $m\in\irrd_{+}$ of the subspaces $V_{m}\cap \Fcal^{\gamma}$. Moreover, the subsampling operator $S_{m}$ induces a one-to-one mapping from $V_{m}\cap \Fcal^{\gamma}$, which is included in $V_{m}\cap\ell^{1}$, onto $\Fcal^{\gamma}(\N)$. Then, letting $\pi_{m}$ denote the projection onto $V_{m}$, we deduce from Lemma~\ref{lem:SmJm} and Proposition~\ref{prp:invJcalTau} that the following diagram is commutative:
\[
\xymatrix{
    \Fcal^{\gamma} \ar[r]^{J} \ar@{->>}[d]^{\pi_{m}} & \Fcal^{\gamma} \ar@{->>}[d]^{\pi_{m}}\\
    V_{m}\cap \Fcal^{\gamma} \ar[r]^{J_{m}} \ar[d]^{S_{m}}_{\sim} & V_{m}\cap \Fcal^{\gamma} \ar[d]^{S_{m}}_{\sim} \\
    \Fcal^{\gamma}(\N) \ar[r]^{2\Jcal}_{\sim} & \Fcal^{\gamma}(\N)
}
\]
This implies the invertibility of the operator $J$ on the space $\Fcal^{\gamma}$. In order to obtain an explicit formula for its inverse $J^{-1}$, let us make use of the diagram to infer that the equation $A=J(a)$ implies
\[
S_{m}(\pi_{m}(a))=\frac{1}{2}\Jcal^{-1}(S_{m}(\pi_{m}(A)))
\]
for all $m\in\irrd_{+}$, where $\Jcal^{-1}$ denotes the inverse of the one-dimensional jump operator $\Jcal$ on $\Fcal^{\gamma}(\N)$. By means of~(\ref{eq:invJcal}), we deduce that
\[
S_{m}(a)=\left(\frac{1}{2}\sum_{l=1}^{\infty}\mu(l) A_{lkm}\right)_{k\geq 1}
\]
and~(\ref{eq:invJ}) follows. Finally, the method that we used above in order to show the continuity of the operator $J$ also applies to its inverse because the M\"obius function is at most one in absolute value; proceeding in this way, we deduce that $J^{-1}$ is continuous with operator norm at most $\zeta(\gamma)/2$.
\end{proof}

We shall show in Section~\ref{sec:sparse} below that the statement of Proposition~\ref{prp:equivdec} may be extended to the case where $0<\gamma\leq 1$, up to replacing $\Fcal^{\gamma}$ by a subspace formed by sequences whose support satisfies an additional sparsity assumption.

\section{Pointwise H\"older regularity}\label{sec:upbnd}

The section contains general results on the pointwise regularity of multivariate functions with a dense set of discontinuities, a class in which the typical Davenport series fall. We begin by recalling the appropriate definitions.

\begin{df}\label{df:hol}
Let $f:\R^{d}\to\R$ be a locally bounded function, $x_{0}\in\R^{d}$ and $\alpha\geq 0$. The function $f$ belongs to $C^{\alpha}(x_{0})$ if there exist $C>0$ and a polynomial $P_{x_{0}}$ of degree less than $\alpha$ such that for all $x$ in a neighborhood of $x_{0}$,
\begin{equation}\label{eq:df:Calphax0}
|f(x)-P_{x_{0}}(x)|\leq C\,|x-x_{0}|^{\alpha}.
\end{equation}
The {\em H\"older exponent} of $f$ at $x_{0}$ is then defined by
\[
h_{f}(x_{0})=\sup\{\alpha\geq 0 \:|\: f\in C^{\alpha}(x_{0})\}.
\]
\end{df}

Note that $h_{f}$ takes values in $[0,\infty]$. The following lemma yields an upper bound on the pointwise H\"older exponent of functions that have a dense set of discontinuities, and will be applied to Davenport series in the following. It is a direct extension to the multivariate setting of Lemma~1 in~\cite{Jaffard:1997jx}.

\begin{lem}\label{lem:upbndjump}
Let $f:\R^{d}\to\R$ be a locally bounded function and let $x_{0}\in\R^{d}$. Then,
\begin{equation}\label{eq:lem:upbndjump}
h_{f}(x_{0})\leq\liminf_{s\to x_{0}} \frac{\log\Delta_{f}(s)}{\log|s-x_{0}|}.
\end{equation}
\end{lem}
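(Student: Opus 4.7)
The plan is to show that any admissible exponent $\alpha < h_{f}(x_{0})$ forces the jump magnitudes $\Delta_{f}(s)$ at nearby points $s$ to decay at least like $|s-x_{0}|^{\alpha}$; after taking logarithms and letting $\alpha\nearrow h_{f}(x_{0})$ this yields~(\ref{eq:lem:upbndjump}). The key observation is that the polynomial part $P_{x_{0}}$ featured in Definition~\ref{df:hol} is globally continuous, so it contributes nothing to the local oscillation of $f$ at any point.

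Concretely, I would fix $\alpha<h_{f}(x_{0})$, choose $C>0$ and a polynomial $P_{x_{0}}$ of degree $<\alpha$ so that $|f(x)-P_{x_{0}}(x)|\leq C|x-x_{0}|^{\alpha}$ for every $x$ in some ball $\opball{x_{0}}{\rho}$, and then for any $s\in\opball{x_{0}}{\rho}$ and any $r>0$ with $\opball{s}{r}\subseteq\opball{x_{0}}{\rho}$, split
\[
|f(x)-f(y)|\leq|f(x)-P_{x_{0}}(x)|+|P_{x_{0}}(x)-P_{x_{0}}(y)|+|P_{x_{0}}(y)-f(y)|
\]
for $x,y\in\opball{s}{r}$. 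Taking the supremum in $x,y$ and letting $r\to 0$, the middle term vanishes by continuity of $P_{x_{0}}$ and the outer terms tend to $C|s-x_{0}|^{\alpha}$, giving
\[
\Delta_{f}(s)\leq 2C\,|s-x_{0}|^{\alpha}.
\]

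For $s$ close enough to $x_{0}$ that $|s-x_{0}|<1$, taking logarithms and dividing by the negative quantity $\log|s-x_{0}|$ flips the inequality to
\[
\frac{\log\Delta_{f}(s)}{\log|s-x_{0}|}\geq\alpha+\frac{\log(2C)}{\log|s-x_{0}|},
\]
whence the liminf as $s\to x_{0}$ is at least $\alpha$. Letting $\alpha\nearrow h_{f}(x_{0})$ closes the argument. The only mildly delicate point is the possibility that $\Delta_{f}(s)=0$ along a subsequence $s\to x_{0}$; with the natural convention $\log 0=-\infty$ the ratio is then $+\infty$ and only raises the liminf, so the estimate accommodates this case automatically. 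I do not foresee a genuine obstacle here: the proof is a straightforward extension to $\R^{d}$ of the one-variable statement in~\cite{Jaffard:1997jx}, since the triangle inequality trick used above is dimension-free.
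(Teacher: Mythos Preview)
Your proof is correct and follows essentially the same approach as the paper: both use that the polynomial $P_{x_{0}}$ is continuous (hence contributes nothing to the jump) so that $\Delta_{f}(s)$ is controlled by the H\"older remainder $C|s-x_{0}|^{\alpha}$, then take logarithms and let $\alpha\nearrow h_{f}(x_{0})$. Your presentation is in fact slightly more streamlined, since you bound the oscillation directly via the triangle inequality, whereas the paper picks two points $x_{1},x_{2}$ near $s$ realizing the jump up to $\eps=\Delta_{f}(s)/6$ and then works along a sequence $(s_{n})$ achieving the liminf; the content is the same.
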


\begin{proof}
We may obviously assume that $h_{f}(x_{0})$ is positive, in which case $f$ is continuous at $x_{0}$. Then, given a positive real $\alpha$ less than $h_{f}(x_{0})$, there exists $\delta,C>0$ and a polynomial $P_{x_{0}}$ such that~(\ref{eq:df:Calphax0}) holds for any $x$ in the open ball $\opball{x_{0}}{\delta}$.

Now, let $s$ be a discontinuity point of $f$, which thus necessarily differs from $x_{0}$. The magnitude $\Delta_{f}(s)$ of the jump of $f$ at $s$ is positive, as well as $\eps=\Delta_{f}(s)/6$. Owing to the definition of $\Delta_{f} (s)$ and the continuity of the polynomial $P_{x_{0}}$, there exist two points $x_{1}$ and $x_{2}$ such that
\[
|f(x_{1})-f(x_{2})|\geq\Delta_{f}(s)-\eps \qquad\mbox{and}\qquad |P_{x_{0}}(x_{1})-P_{x_{0}}(x_{2})|\leq\eps.
\]
These two points may be chosen arbitrarily close to $s$, for instance within range $|s-x_{0}|/2$ from that point. Therefore, at least one of them, denoted by $x(s)$, satisfies
\begin{equation}\label{eq:condxs}
|x(s)-s|\leq\frac{|s-x_{0}|}{2} \qquad\mbox{and}\qquad |f(x(s))-P_{x_{0}}(x(s))|\geq\frac{\Delta_{f}(s)}{3}.
\end{equation}

Let $L$ denote the right-hand side of~(\ref{eq:lem:upbndjump}), which we may obviously assume to be finite. Thus, there exists a sequence $(s_{n})_{n\geq 1}$ of discontinuity points of $f$ which realizes the lower limit $L$; the points $s_{n}$ necessarily differ from $x_{0}$ but they converge to that point. For each integer $n\geq 1$, the above procedure yields a point $x(s_{n})$ for which~(\ref{eq:condxs}) holds. The resulting sequence $(x(s_{n}))_{n\geq 1}$ thus satisfies
\[
\limsup_{n\to\infty}\frac{\log|f(x(s_{n}))-P_{x_{0}}(x(s_{n}))|}{\log|x(s_n)-x_0|}\leq L.
\]
In the meantime,~(\ref{eq:df:Calphax0}) implies that
\[
\liminf_{n\to\infty}\frac{\log|f(x(s_{n}))-P_{x_{0}}(x(s_{n}))|}{\log|x(s_n)-x_0|}\geq\alpha.
\]
We deduce that $\alpha\leq L$, and the result follows from the fact that $\alpha$ may be chosen arbitrarily close to the H\"older exponent $h_{f}(x_{0})$.
\end{proof}

Let us apply Lemma~\ref{lem:upbndjump} to multivariate Davenport series. For each vector $q$ in $\ZD$, let $\prim{q}$ denote the set of all integers $p\in\Z$ such that $\gcd(p,q)=1$. We remark that a vector $q$ is irreducible, {\em i.e.}~belongs to $\irrd$, if and only if $\prim{q}$ contains zero. Moreover, the set $\Hcal_{d}$ which indexes the hyperplanes $H_{p,q}$ in a unique manner is actually formed by the pairs $(p,q)$ such that $p\in\prim{q}$ and $q\in\Z^{d}_{+}$. Now, given a point $x_{0}\in\R^{d}$, let $\delta^{\Pcal}_{q}(x_{0})$ denote the distance between $x_{0}$ and the hyperplanes $H_{p,q}$ with $p\in\prim{q}$, that is,
\begin{equation}\label{eq:defdPqx0}
\delta^{\Pcal}_{q}(x_{0})=\dist\left(x_{0},\bigcup_{p\in\prim{q}} H_{p,q}\right)=\frac{1}{|q|}\inf_{p\in\prim{q}} |q\cdot x_{0}-p|,
\end{equation}
where $|q|$ is the Euclidean norm of the vector $q$. It is obvious that $\delta^{\Pcal}_{-q}(x_{0})$ coincides with $\delta^{\Pcal}_{q}(x_{0})$, so there is no loss of information in assuming, whenever necessary, that $q$ is in $\Z^{d}_{+}$. Also, it is easy to see that the set $\prim{q}$ is invariant under the translations of the form $p\mapsto p+k\cdot q$ with $k\in\Z^{d}$, which makes it clear that the function $\delta^{\Pcal}_{q}$ is $\Z^{d}$-periodic.

The analysis of the discontinuities of Davenport series that we led in Section~\ref{sec:disc} above ensures that every hyperplane $H_{p,q}$ indexed by $(p,q)\in\Hcal_{d}$ contains a dense set of points at which the Davenport series has a discontinuity of magnitude $|A_{q}|$, with the proviso that the sum $A_{q}$ defined by~(\ref{eq:defAq}) does not vanish. These observations then yield the following corollary to Lemma~\ref{lem:upbndjump}.
 
\begin{cor}\label{cor:upbndjumpDav}
Let $f$ be a Davenport series with $a=(a_{n})_{n\in\Z^{d}}\in\ell^{1}$. Then,
\[
\forall x_{0}\in\R^{d} \qquad h_{f}(x_{0})\leq\liminf_{q\to\infty\atop q\in\supp{A}}\frac{\log|A_{q}|}{\log \delta^{\Pcal}_{q}(x_{0})},
\]
where the sequence $A=(A_{q})_{q\in\Z^{d}}$ of jump sizes is the image of the sequence $a$ under the jump operator $J$ defined by~(\ref{eq:defJ}).
\end{cor}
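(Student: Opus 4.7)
The plan is to apply Lemma~\ref{lem:upbndjump} along a sequence of discontinuity points of $f$ accumulating at $x_{0}$. The analysis in Section~\ref{sec:disc} shows that whenever $(p,q)\in\Hcal_{d}$ and $A_{q}\neq 0$, every point $s\in H_{p,q}$ satisfies $\Delta_{f}(s)\geq |A_{q}|$: at Lebesgue-almost every such $s$ the jump is exactly $|A_{q}|$, and since these generic points are dense in $H_{p,q}$ the same inequality extends to the remaining points by approximation. Moreover, the oddness of $A$ yields $|A_{-q}|=|A_{q}|$ and $\delta^{\Pcal}_{-q}=\delta^{\Pcal}_{q}$, so it suffices to control the right-hand side of the corollary along $q\in\supp{A}\cap\Z^{d}_{+}$.

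For each such $q$, the set $\prim{q}$ is discrete, and the infimum in~(\ref{eq:defdPqx0}) is therefore attained at some $p_{q}\in\prim{q}$. Let $s_{q}$ be the orthogonal projection of $x_{0}$ onto $H_{p_{q},q}$; then $|s_{q}-x_{0}|=\delta^{\Pcal}_{q}(x_{0})$ and $s_{q}\in H_{p_{q},q}$, so $\Delta_{f}(s_{q})\geq|A_{q}|>0$ by the previous paragraph. If $\delta^{\Pcal}_{q}(x_{0})=0$ for some $q\in\supp{A}\cap\Z^{d}_{+}$, then $x_{0}\in H_{p_{q},q}$ and hence $h_{f}(x_{0})=0$, making the inequality vacuous; I may therefore assume $\delta^{\Pcal}_{q}(x_{0})>0$ for all $q\in\supp{A}\cap\Z^{d}_{+}$ in what follows.

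Let $L$ denote the right-hand side of the inequality in the corollary, and assume $L<\infty$ (otherwise there is nothing to prove). Pick $(q_{n})\subset\supp{A}\cap\Z^{d}_{+}$ with $|q_{n}|\to\infty$ realizing this limit inferior. Along a subsequence one has $\delta^{\Pcal}_{q_{n}}(x_{0})\to 0$: indeed, otherwise $\log\delta^{\Pcal}_{q_{n}}(x_{0})$ would stay bounded below while the $\ell^{1}$ hypothesis forces $|A_{q_{n}}|\leq 2\sum_{l\geq 1}|a_{lq_{n}}|\to 0$, so the ratio $\log|A_{q_{n}}|/\log\delta^{\Pcal}_{q_{n}}(x_{0})$ would tend to $+\infty$, contradicting $L<\infty$. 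Hence $s_{q_{n}}\to x_{0}$ with $s_{q_{n}}\neq x_{0}$, and for $n$ large both $|A_{q_{n}}|$ and $\delta^{\Pcal}_{q_{n}}(x_{0})$ belong to $(0,1)$; the inequality $\Delta_{f}(s_{q_{n}})\geq|A_{q_{n}}|$ together with the negativity of $\log|s_{q_{n}}-x_{0}|=\log\delta^{\Pcal}_{q_{n}}(x_{0})$ then gives
\[
\frac{\log\Delta_{f}(s_{q_{n}})}{\log|s_{q_{n}}-x_{0}|}\leq\frac{\log|A_{q_{n}}|}{\log\delta^{\Pcal}_{q_{n}}(x_{0})}.
\]
Passing to the limit inferior and invoking Lemma~\ref{lem:upbndjump} yields $h_{f}(x_{0})\leq L$, which is the claim.

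The main place where care is needed is the logarithmic sign bookkeeping, together with the extraction of a subsequence along which $s_{q_{n}}$ genuinely accumulates at $x_{0}$; once the decay $|A_{q_{n}}|\to 0$ is secured from the $\ell^{1}$ summability, everything else is a mechanical combination of Lemma~\ref{lem:upbndjump} with the jump description from Section~\ref{sec:disc}.
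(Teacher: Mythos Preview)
Your argument is correct and is exactly the approach the paper has in mind: feed the discontinuity analysis of Section~\ref{sec:disc} (every point of $H_{p,q}$ with $A_q\neq 0$ carries a jump of magnitude at least $|A_q|$) into Lemma~\ref{lem:upbndjump} via nearest-hyperplane projections. One small point you use without stating it in the contradiction step is that $\delta^{\Pcal}_{q}(x_{0})<1$ always, so that the denominator $\log\delta^{\Pcal}_{q}(x_{0})$ is negative and the ratio is forced to $+\infty$; this is immediate since $\prim{q}$ contains $k\gcd(q)+1$ for every $k\in\Z$, whence $\delta^{\Pcal}_{q}(x_{0})\leq\gcd(q)/(2|q|)\leq 1/2$.
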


In the above statement, we adopt the usual convention according to which the lower limit is infinite if the index set $\supp{A}$ is finite, in which case the bound is trivial. We now illustrate Corollary~\ref{cor:upbndjumpDav} by pointing out a class of Davenport series whose H\"older exponent vanishes everywhere, as a direct consequence of the previous upper bound. These series are characterized by the fact that the magnitude $|A_{q}|$ of the jumps does not become too small as $q$ goes to infinity along a subsequence satisfying particular arithmetical properties.

In order to specify these properties, let us begin by observing that $\delta^{\Pcal}_{q}(x_{0})$ may sometimes be bounded above by $1/|q|$ infinitely often, up to a logarithmic factor; also, in view of the statement of Corollary~\ref{cor:upbndjumpDav}, we may restrict our attention to the vectors $q$ for which $A_{q}$ does not vanish. The situation described above then occurs precisely when the support of the sequence $A$ of jump sizes is regular in the sense of the next definition, which makes use of the function $\kappa$ defined as follows: For every point $x_{0}\in\R^{d}$ and every infinite subset $Q$ of $\Z^{d}$,
\[
\kappa(x_{0},Q)=\limsup_{q\to\infty \atop q\in Q}\frac{\log\left( \inf\limits_{p\in\prim{q}} |q\cdot x_{0}-p| \right)}{\log|q|}.
\]
Note that, as a consequence of the periodicity of the function $\delta^{\Pcal}_{q}$, the function $\kappa(\,\cdot\,,Q)$ is $\Z^{d}$-periodic.

\begin{df}
\begin{enumerate}
\item An infinite subset $Q$ of $\Z^{d}$ is said to be regular if the following condition holds:
\[
\forall x_{0}\in\R^{d} \qquad \kappa(x_{0},Q)<1.
\]
\item A Davenport series with coefficients given by a sequence $a\in\ell^{1}$ is regular if $\supp{J(a)}$ is a regular subset of $\Z^{d}$.
\end{enumerate}
\end{df}

It is clear that any infinite subset of a regular set is also regular. Moreover, the fact that a set $Q$ is regular roughly means that the sets $\prim{q}$, for $q\in Q$, do not have exceptionally long gaps. In order to elaborate on this remark, let us focus on the one-dimensional case and give some heuristic arguments. In that situation, the sets $\prim{q}$ are $q\Z$-periodic, so that it suffices to analyze their gaps in the interval $\{1,\ldots,q-1\}$, which clearly amounts to examining the difference between two consecutive numbers prime to $q$. There are $\phi(q)$ such numbers, where $\phi$ denotes Euler's totient function. Hence, in the absence of exceptionally long gaps, the intervals between two consecutive numbers prime to $q$ would have length of the order of $q/\phi(q)$. Thus, the infimum arising in the definition of $\kappa(x_{0},Q)$ would grow at a comparable rate, up to constants, and $\kappa(x_{0},Q)$ would actually vanish. This is due to the fact that $q/\phi(q)={\rm O}(\log\log q)$ as $q\to\infty$; indeed, it is known that
\[
\liminf_{q\to\infty}\frac{\phi(q)}{q}\log\log q=\ee^{-\gamma},
\]
where $\gamma$ denotes the Euler-Mascheroni constant, see {\em e.g.}~\cite[Theorem~13.14]{Apostol:1976uq}. In general, though, there may exist exceptional gaps of length much larger or smaller than $q/\phi(q)$ between the numbers prime to $q$, so the above arguments are not always applicable. The literature seems rather scarce on that difficult topic, apart from a series of papers by C.~Hooley~\cite{Hooley:1962uq,Hooley:1965kx,Hooley:1965fj}.

In some cases, the previous heuristic arguments can be turned into a rigorous proof. For instance, let us suppose that $Q$ is the set $l^{\N}=\{l^{k},\ k\geq 1\}$ of integer powers of a given prime number $l\geq 2$. For any $k\geq 1$, the set $\prim{l^{k}}$ is formed by the nonmultiples of $l$, thereby having gaps of length one only. Hence, the infimum of $|l^{k}x_{0}-p|$ over $p\in\prim{l^{k}}$ is at most two, so that $\kappa(x_{0},l^{\N})=0$. A one-dimensional Davenport series whose coefficients $a=(a_{n})_{n\geq 1}$ are supported in $l^{\N}$ will be termed as $l$-adic. The sequence $A=J(a)$ of jump sizes is then also supported in $l^{\N}$. As a result, any $l$-adic Davenport series is regular.

Note however that in slightly more complicated examples, the above infimum may not easily be bounded by a constant, because the sets $\prim{q}$ may be chosen in such a way that they have longer gaps than above. As an illustration, still in dimension one, assume that $Q$ is the set of all primorials of prime numbers, that is, the set of all integers $q_{k}=p_{1}\cdots p_{k}$ for $k\geq 1$, where $p_{i}$ is the $i$-th prime number. Then, $\prim{q_{k}}$ is the sequence of integers prime that are not a multiple of any of the primes $p_{1},\ldots,p_{k}$. It follows that $\prim{q_{k}}$ has gaps of size at least $p_{k+1}-1$, which tends to infinity as $k\to\infty$.

We now introduce a definition that bears on the asymptotic behavior of the sequence indexed by $\Z^{d}$; we shall apply it in what follows to the sequence formed by the jump magnitudes $|A_{q}|$.

\begin{df}\label{df:slowdecay}
Let $b=(b_{q})_{q\in\Z^{d}}$ be a real-valued sequence and let $Q$ be an infinite subset of $\Z^{d}$. We say that the sequence $b$ has slow decay on $Q$ if
\[
\liminf_{q\to\infty \atop q\in Q}\frac{-\log|b_{q}|}{\log|q|}=0.
\]
\end{df}

Note that this condition is more and more restrictive as the subset $Q$ becomes smaller; to be precise, any sequence with slow decay on a given set has slow decay on all its supersets. In addition, the above definition is in stark contradiction with that of the sets $\Fcal^{\gamma}$ that we introduced in Section~\ref{sec:jumpop} and on which the jump operator $J$ is a bicontinuous automorphism. In fact, it is easy to see that the sequences belonging to the sets $\Fcal^{\gamma}$, for $\gamma>0$, do not have slow decay on $\Z^{d}$.

Combining the previous two definitions, and calling upon Corollary~\ref{cor:upbndjumpDav}, we readily deduce the following result.

\begin{cor}
Let $(a_{n})_{n\in\Z^{d}}$ be a sequence in $\ell^{1}$, and let $A=(A_{q})_{q\in\Z^{d}}$ be its image under the jump operator $J$. Let us assume that $A$ has slow decay on a regular subset of $\Z^{d}$. Then,
\[
\forall x_0\in\R^{d} \qquad h_{f}(x_{0})=0.
\]
\end{cor}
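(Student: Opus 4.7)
The plan is to derive this as a direct consequence of Corollary~\ref{cor:upbndjumpDav}, by combining the two hypotheses (regularity of $Q$ and slow decay of $A$ on $Q$) to control the numerator and denominator of the ratio $\log|A_q|/\log\delta^{\Pcal}_{q}(x_{0})$ separately. Since the H\"older exponent is always nonnegative, it suffices to show that the right-hand side of the bound in Corollary~\ref{cor:upbndjumpDav} is at most zero.

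First I would unpack the regularity assumption. By hypothesis, $\kappa(x_{0},Q)<1$, so we may choose $\eta\in(0,1)$ with $\kappa(x_{0},Q)<1-\eta$. The definition of $\kappa$ as a $\limsup$ then yields some $q_{0}$ such that
\[
\forall q\in Q,\ |q|\geq q_{0} \qquad \log\inf_{p\in\prim{q}}|q\cdot x_{0}-p|\leq (1-\eta)\log|q|.
\]
Combined with the identity $\log\delta^{\Pcal}_{q}(x_{0})=-\log|q|+\log\inf_{p\in\prim{q}}|q\cdot x_{0}-p|$ coming from~(\ref{eq:defdPqx0}), this gives $\log\delta^{\Pcal}_{q}(x_{0})\leq -\eta\log|q|$ for all sufficiently large $q$ in $Q$. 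In particular, for such $q$ the quantity $\log\delta^{\Pcal}_{q}(x_{0})$ is strictly negative and $|\log\delta^{\Pcal}_{q}(x_{0})|\geq \eta\log|q|$.

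Next I would exploit slow decay. Since $\liminf_{q\to\infty,\,q\in Q}(-\log|A_{q}|)/\log|q|=0$, I can extract a sequence $(q_{n})_{n\geq 1}$ in $Q$ tending to infinity along which $-\log|A_{q_{n}}|=\eps_{n}\log|q_{n}|$ with $\eps_{n}\to 0$; in particular each such $q_{n}$ lies in $\supp A$, so is admissible in the lower limit of Corollary~\ref{cor:upbndjumpDav}. Dropping finitely many terms, I may assume $|q_{n}|\geq q_{0}$ and the bound on $\log\delta^{\Pcal}_{q_{n}}(x_{0})$ from the previous paragraph applies.

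Finally, I would compute the ratio along this subsequence. The denominator satisfies $\log\delta^{\Pcal}_{q_{n}}(x_{0})\leq -\eta\log|q_{n}|<0$. If $|A_{q_{n}}|\leq 1$, then $\log|A_{q_{n}}|\leq 0$ and
\[
\frac{\log|A_{q_{n}}|}{\log\delta^{\Pcal}_{q_{n}}(x_{0})}=\frac{|\log|A_{q_{n}}||}{|\log\delta^{\Pcal}_{q_{n}}(x_{0})|}\leq\frac{\eps_{n}\log|q_{n}|}{\eta\log|q_{n}|}=\frac{\eps_{n}}{\eta};
\]
if instead $|A_{q_{n}}|>1$, the numerator is positive while the denominator is negative, so the ratio is itself negative and the same bound $\eps_{n}/\eta$ applies a fortiori. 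In both cases the ratio is bounded above by $\eps_{n}/\eta$, which tends to zero. Therefore
\[
\liminf_{q\to\infty,\,q\in\supp A}\frac{\log|A_{q}|}{\log\delta^{\Pcal}_{q}(x_{0})}\leq 0,
\]
and Corollary~\ref{cor:upbndjumpDav} together with the trivial lower bound $h_{f}(x_{0})\geq 0$ forces $h_{f}(x_{0})=0$. The only subtlety worth noting is the case $|A_{q}|>1$: one must not accidentally require $|A_{q}|\leq 1$ when turning slow decay into a quantitative estimate, which the case distinction above handles cleanly. No step looks genuinely hard; the proof is essentially an arithmetic consequence of the definitions once the two asymptotic inputs are in place.
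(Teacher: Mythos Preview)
Your proof is correct and follows exactly the route the paper intends: the paper gives no detailed argument, merely stating that the result is obtained by ``combining the previous two definitions, and calling upon Corollary~\ref{cor:upbndjumpDav}'', which is precisely what you do. The only cosmetic imprecision is in your second case: when $|A_{q_n}|>1$ the ratio is indeed negative, which already suffices, but it need not literally be bounded above by $\eps_n/\eta$ (since $\eps_n<0$ there); this does not affect the conclusion.
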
 

A typical situation encompassed by the above setting is that of a regular Davenport series for which the sequence $A$ of jump magnitudes has slow decay. In dimension one, this is the case of the Davenport series of the form
\[
f_{l,\alpha}(x)=\sum_{k=1}^{\infty} \frac{\{l^k x\}}{k^\alpha}
\]
where $l$ is a prime number and $\alpha$ is larger than one. Indeed, $f_{l,\alpha}$ being an $l$-adic Davenport series, it is regular. Furthermore, let $b=(b_{n})_{n\geq 1}$ denote the sequence of its coefficients, namely, $b_{n}$ is equal to $k_{0}^{-\alpha}$ if $n=l^{k_{0}}$ for some integer $k_{0}\geq 1$, and vanishes otherwise. Then, one easily checks that the sequence of jump sizes $B=\Jcal(b)$ admits the same expression except that we have to replace $k_{0}^{-\alpha}$ by the sum $\sum_{k\geq k_{0}}k^{-\alpha}$ in the first case. It follows that $B$ has slow decay, and the previous result ensures that $h_{f_{l,\alpha}}(x_{0})$ vanishes everywhere.

\section{Sparse Davenport series}\label{sec:sparse} 

Recall that Corollary~\ref{cor:upbndjumpDav} above provides an upper bound on the H\"older exponent of a general Davenport series. In this section, we shall show that, under further assumptions, this bound gives the correct value of the H\"older exponent, which will ultimately enable us to perform the multifractal analysis of the corresponding series, see Section~\ref{sec:multifrac}. Our main assumption implies that there cannot be too many nonvanishing terms in the Davenport expansions, and boils down to a sparsity condition on the support of the sequence of Davenport coefficients. Note that, in one variable, the only case where one can determine the spectrum of singularities of Davenport series without additional assumptions on the coefficients is precisely the case where one assumes that the frequencies satisfy a lacunarity assumption, see~\cite{Jaffard:2010qf}. 

\subsection{Sparse sets and link with lacunary and Hadamard sequences}

Formally, we define the notion of sparse set in the following manner. Recall that $\opball{0}{R}$ denotes the open ball of $\R^{d}$ with center zero and radius $R$. In addition, we let $\card$ stand for cardinality.

\begin{df}
Let $Q$ be a nonempty subset of $\R^{d}$. The set $Q$ is said to be sparse if
\[
\lim_{R\to\infty}\frac{\log\card\left(Q\cap\opball{0}{R}\right)}{\log R}=0.
\]
\end{df}

In what follows, we say that an $\R^{d}$-valued sequence $\lambda=(\lambda_{n})_{n\geq 1}$ is sparse if the set of its values, namely, $\{\lambda_{n},\,n\geq 1\}$ is sparse in the sense of the above definition. If there is no redundancy in the sequence, {\em i.e.}~if it is injective, then the sparsity condition suggests that its terms do not accumulate excessively, but rather escape to infinity quite fast. Indeed, if $\lambda$ is sparse and injective, it is possible to rearrange its terms so as to assume that the sequence $(|\lambda_{n}|)_{n\geq 1}$ is nondecreasing. Then, one easily checks that the latter sequence grows faster than any power function at infinity, specifically,
\[
\lim_{n\to\infty}\frac{\log|\lambda_{n}|}{\log n}=\infty.
\]

A notable case where the sparsity condition holds is given by the sequences $(\lambda_{n})_{n\geq 1}$ that are both {\em separated}, meaning that
\[
\exists C>0 \quad \forall n,m\geq 1 \qquad n\neq m \quad\Longrightarrow\quad |\lambda_{n}-\lambda_{m}|\geq C,
\]
and {\em lacunary}, in the sense that
\[
\exists C'>0 \quad \forall n,m\geq 1 \qquad n\neq m \quad\Longrightarrow\quad |\lambda_{n}-\lambda_{m}|\geq C'(|\lambda_{n}|+|\lambda_{m}|);
\]
these two notions are standard in the study of nonharmonic Fourier series, see for instance~\cite{Jaffard:2010ys,Young:1980gf}. Indeed, let us assume that the two above conditions hold, and let $\Ncal_{j}(\lambda)$ collect the indices of the terms of the sequence within distance between $2^{j-1}$ and $2^{j}$ from the origin, that is,
\[
\Ncal_{j}(\lambda)=\{n\geq 1 \:|\: 2^{j-1}\leq|\lambda_{n}|<2^{j}\},
\]
where $j\geq 1$. The lacunarity assumption entails that the open balls $\opball{\lambda_{n}}{C'|\lambda_{n}|}$, for $n\geq 1$, are disjoint. Therefore, the balls $\opball{\lambda_{n}}{C' 2^{j-1}}$ indexed by $n\in\Ncal_{j}(\lambda)$ do not intersect either. Meanwhile, all these balls are included in the open ball centered at the origin with radius $(1+C'/2)2^{j}$. Comparing the volume of these balls, we infer that $\card\Ncal_{j}(\lambda)$ is bounded above by $(1+2/C')^{d}$. In addition, the set
\[
\Ncal_{0}(\lambda)=\{n\geq 1 \:|\: |\lambda_{n}|<1\}
\]
is necessarily finite; in fact, as a result of the separateness condition, the balls $\opball{\lambda_{n}}{C/2}$, for $\Ncal_{0}(\lambda)$, are disjoint and included in the open ball centered at zero with radius $1+C/2$, and the same volume comparison argument implies that $\card\Ncal_{0}(\lambda)$ is at most $(1+2/C)^{d}$. As a consequence, the sequence $\lambda$ is sparse.

The above approach also enables us to write the sparse set $\{\lambda_{n},\,n\geq 1\}$ as a finite union of sets of the form $\{\lambda^{(k)}_{n},\,n\geq 1\}$, where each sequence $(\lambda^{(k)}_{n})_{n\geq 1}$ is a {\em Hadamard sequence}, which means that it is separated and satisfies
\[
\exists C''>1 \quad \forall n\geq 1 \qquad \frac{|\lambda^{(k)}_{n+1}|}{|\lambda^{(k)}_{n}|}\geq C''.
\]
Each of these sequences is obtained first by considering the even values of $j$ and retaining only one term of the initial sequence $\lambda$ among those indexed by $\Ncal_{j}(\lambda)$, and then by handling the odd values of $j$. Note that a Hadamard sequence is clearly lacunary, and therefore sparse, but the converse need not hold. In addition, a finite union of Hadamard sequences need not be lacunary.

\subsection{Decay of sequences with sparse support and behavior of the jump operator}

Recall that the spaces $\Fcal^{\gamma}$ play an important role in the analysis of the jump operator $J$; in fact, we proved in Section~\ref{sec:jumpop} that the jump operator $J$ is a bicontinuous automorphism of the spaces $\Fcal^{\gamma}$, for $\gamma>1$. We shall now obtain an analogous result in the case where $0<\gamma\leq 1$, up to a sparsity assumption. To be precise, the set $\Fcal^{\gamma}$ will be replaced by the subspace $\Fcal^{\gamma}_{\Scal}$ formed by the sequences with sparse support, namely,
\[
\Fcal^{\gamma}_{\Scal}=\Scal\cap\Fcal^{\gamma},
\]
where $\Scal$ denotes the vector space of all odd sequences $a=(a_{n})_{n\in\Z^{d}}$ for which the support $\supp{a}$ is a sparse subset of $\Z^{d}$.

Before studying the behavior of the jump operator on the spaces $\Fcal^{\gamma}_{\Scal}$, let us point out that they may be used to characterize the decay of a sequence $a\in\Scal$. Specifically, we measure the rate of decay of such a sequence by considering
\begin{equation}\label{eq:defgamma}
\gamma_{a}=\liminf_{n\to\infty \atop n\in\supp{a}}\frac{-\log|a_{n}|}{\log|n|}.
\end{equation}
In particular, a sequence $a\in\Scal$ has slow decay on its support in the sense of Definition~\ref{df:slowdecay} if and only if $\gamma_{a}$ vanishes. One then easily checks that
\[
\gamma_{a}=\sup\{\gamma>0\:|\:a\in \Fcal^{\gamma}_{\Scal}\}.
\]
It will also be useful to remark that, due to the sparsity of the support, $\gamma_{a}$ may as well be seen as a critical exponent for the convergence of a series. Indeed, the fact that $a$ is in $\Scal$ implies that
\begin{equation}\label{eq:defgammabis}
\gamma_{a}=\sup\left\{\gamma>0\:\Biggl|\:\sum_{n\in\Z^{d}}|n|\,|a_{n}|^{1/\gamma}<\infty\right\}
=\inf\left\{\gamma>0\:\Biggl|\:\sum_{n\in\Z^{d}}|n|\,|a_{n}|^{1/\gamma}=\infty\right\}.
\end{equation}
As we shall show below, the exponent $\gamma_{a}$ will play a crucial role in the study of the multifractal properties of the Davenport series with coefficients given by $a$, see Corollary~\ref{cor:bndgamma} below as well as the results of Section~\ref{sec:multifrac}.

Let us now describe the action of $J$ on the spaces $\Fcal^{\gamma}_{\Scal}$. The next result may be seen as a partial extension of Proposition~\ref{prp:equivdec} to the case where $\gamma$ is no more restricted to be larger than one. However, it is weaker because it does not discuss invertibility properties and the target set of the jump operator $J$ is the intersection
\begin{equation}\label{eq:defFgammamin}
\Fcal^{\gamma,-}=\bigcap_{\eps>0} \Fcal^{\gamma-\eps}
\end{equation}
instead of the mere space $\Fcal^{\gamma}$. Note that $\Fcal^{\gamma,-}$ is endowed with the natural Fr\'echet topology inherited from the norms on the spaces $\Fcal^{\gamma-\eps}$.

\begin{prp}\label{prp:equivdec2}
For any $\gamma>0$, the jump operator $J$ induces a continuous mapping in the following way:
\[
\Fcal^{\gamma}_{\Scal}
\xymatrix{
\ar[r]^{J} &
}
\Fcal^{\gamma,-}
\]
\end{prp}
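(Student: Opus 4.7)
My plan is to combine the pointwise bound $|a_n|\le |a|_{\Fcal^{\gamma}}|n|^{-\gamma}$ coming from membership in $\Fcal^{\gamma}$ with the sparsity of $\supp{a}$ in order to control each sum $A_q=2\sum_{l\ge 1}a_{lq}$. For $\gamma>1$ one can ignore sparsity and use $\sum_l l^{-\gamma}<\infty$, as in Proposition~\ref{prp:equivdec}; when $\gamma\le 1$ that route is no longer available, and the scarcity of nonzero terms supplied by the sparsity assumption is what must compensate.

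Fix $\gamma>0$ and $\eps\in(0,\gamma)$, take $a\in\Fcal^{\gamma}_{\Scal}$, and set $\Ncal(R)=\card(\supp{a}\cap\opball{0}{R})$; sparsity furnishes a constant $C>0$ with $\Ncal(R)\le C R^{\eps}$ for every $R\ge 1$. Fixing $q\in\ZD$, I would partition the indices $l\ge 1$ such that $lq\in\supp{a}$ into dyadic blocks $l\in[2^{k},2^{k+1})$ for $k\ge 0$. Each block contains at most $\Ncal(2^{k+1}|q|)\le C(2^{k+1}|q|)^{\eps}$ relevant indices, while each contributing term is bounded by $|a|_{\Fcal^{\gamma}}/(2^{k}|q|)^{\gamma}$. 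Summing the resulting geometric series in $k$, which converges because $\eps<\gamma$, would then yield the single estimate
\[
|A_q|\;\le\;\frac{2^{1+\eps}}{1-2^{\eps-\gamma}}\,C\,|a|_{\Fcal^{\gamma}}\,|q|^{\eps-\gamma}.
\]

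Two consequences would follow. First, the series defining $A_q$ converges absolutely, so $J(a)$ is well-defined as an odd sequence on $\ZD$. Second, $|q|^{\gamma-\eps}|A_q|$ is uniformly bounded in $q$, so $J(a)\in\Fcal^{\gamma-\eps}$; as $\eps\in(0,\gamma)$ was arbitrary, $J(a)\in\Fcal^{\gamma,-}$. For the continuity statement I would apply the same estimate to a difference $a-a_{0}$: since a finite union of sparse sets is sparse, $a-a_{0}$ still lies in $\Fcal^{\gamma}_{\Scal}$, so that a small $\Fcal^{\gamma}$-perturbation of $a_{0}$ induces a small perturbation of $J(a_{0})$ in every seminorm $|\cdot|_{\Fcal^{\gamma-\eps}}$ of the Fr\'echet target.

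The main obstacle is that the constant $C$ in the sparsity bound depends on the particular sequence $a$, so the inequality above is a uniform operator bound only on subclasses of $\Fcal^{\gamma}_{\Scal}$ sharing a common sparsity rate rather than on all of $\Fcal^{\gamma}_{\Scal}$ at once. This is also precisely why one cannot hope to upgrade the statement to invertibility as in Proposition~\ref{prp:equivdec}: the sparsity constraint is imposed on $a$ alone, whereas $\supp{J(a)}$ consists of all divisors of elements of $\supp{a}$ and is in general much larger.
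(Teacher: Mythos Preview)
Your argument is correct and is essentially identical to the paper's: bound $|A_q|$ by $2|a|_{\Fcal^{\gamma}}\sum_{l\ge 1}\ind_{\{lq\in\supp{a}\}}|lq|^{-\gamma}$, split the sum over $l$ into dyadic blocks, use sparsity to bound the number of nonzero terms in each block by $C_{\eps}(|q|2^{j})^{\eps}$, and sum the resulting geometric series to obtain $|A|_{\Fcal^{\gamma-\eps}}\le\frac{2C_{\eps}}{1-2^{\eps-\gamma}}|a|_{\Fcal^{\gamma}}$. Your observation that the constant $C_{\eps}$ depends on $\supp{a}$, so that the estimate is not a uniform operator bound on all of $\Fcal^{\gamma}_{\Scal}$, is accurate and is a point the paper does not address; the paper simply records the same inequality and calls the map continuous.
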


\begin{proof}
Given $\gamma>0$, let $a=(a_{n})_{n\in\Z^{d}}$ be a sequence in $\Fcal^{\gamma}_{\Scal}$, and let $A=(A_{q})_{q\in\Z^{d}}$ denote its image under $J$, that is, $A=J(a)$. Then, for each vector $q\in\ZD$,
\[
|A_{q}|\leq 2\sum_{l=1}^{\infty} |a_{lq}|\leq 2|a|_{\Fcal^{\gamma}}\sum_{l=1}^{\infty}\frac{\ind_{\{lq\in\supp{a}\}}}{|lq|^{\gamma}}.
\]
In order to give an upper bound on the last sum, we split the index set into dyadic intervals. For each integer $j\geq 0$, we have
\[
\sum_{l=2^{j}}^{2^{j+1}-1}\frac{\ind_{\{lq\in\supp{a}\}}}{|lq|^{\gamma}}\leq\frac{2^{-\gamma j}}{|q|^{\gamma}}\card(\supp{a}\cap\opball{0}{|q|2^{j+1}}).
\]
The support of the sequence $a$ is sparse; thus, for all $\eps>0$, its intersection with the open ball centered at the origin with radius $|q|2^{j+1}$ has cardinality at most $C_{\eps}|q|^{\eps}2^{\eps j}$ for some real $C_{\eps}>0$ that depends on neither $q$ nor $j$. We deduce that
\[
|A|_{\Fcal^{\gamma-\eps}}=\sup_{q\in\Z^{d}}|q|^{\gamma-\eps}|A_{q}|\leq 2|a|_{\Fcal^{\gamma}}C_{\eps}\sum_{j=0}^{\infty}2^{(\eps-\gamma)j}=\frac{2C_{\eps}}{1-2^{\eps-\gamma}}|a|_{\Fcal^{\gamma}},
\]
with the proviso that $\eps<\gamma$. The result follows.
\end{proof}

Note that, even when a sequence $a$ has sparse support, the support of the associated sequence $J(a)$ of jump sizes need not be sparse; this is why the target set in the above statement involves $\Fcal^{\gamma-\eps}$, but not $\Fcal^{\gamma-\eps}_{\Scal}$. Moreover, the jump operator on $\Fcal^{\gamma}_{\Scal}$ entails a slight loss in the speed of decay in the sense that the target set is not exactly $\Fcal^{\gamma}$, as in Proposition~\ref{prp:equivdec}, but rather $\Fcal^{\gamma,-}$. Still, a simple adaptation of the above proof shows that $J(a)$ actually belongs to $\Fcal^{\gamma}$ when $a$ is a sequence in $\Fcal^{\gamma}$ for which $\supp{a}$ is composed by the values of a separated and lacunary sequence, which is stronger than assuming that $a$ is in $\Scal$.
 
\subsection{Pointwise regularity of sparse Davenport series}

Now that the notion of sequence with sparse support has been defined, we are in position to introduce the notion of sparse Davenport series.

\begin{df}
A Davenport series with coefficients given by a sequence $a\in\ell^{1}$ is sparse if the support $\supp{a}$ is a sparse set, that is, $a\in\Scal\cap\ell^{1}$.
\end{df}

In order to recover the regularity of the Davenport series $f$ at every point, we shall assume, in addition to the sparsity of the support, that there is no cancellation in the sums~(\ref{eq:defAq}) defining the jump operator, in the sense that $A_{q}$ is at least of the order of magnitude of its largest term. To be specific, for any sequence $a=(a_{n})_{n\in\Z^{d}}$ in $\ell^{1}$, we may consider the even sequence $\overline{a}=(\overline{a}_{q})_{q\in\Z^{d}}$ in $\ell^{\infty}(\Z^{d})$ given by
\[
\forall q\in\Z^{d} \qquad \overline{a}_{q}=\sup_{l\geq 1}|a_{lq}|.
\]
This enables us to define in the following manner a sublinear operator $M$ on $\ell^{1}$, which we refer to as the maximal operator:
\[
\forall (a_{n})_{n\in\Z^{d}}\in\ell^{1} \qquad M((a_{n})_{n\in\Z^{d}})=(\overline{a}_{q})_{q\in\Z^{d}}.
\]
The jump operator and the maximal operator both act on the sequences $a=(a_{n})_{n\in\Z^{d}}$ in $\ell^{1}$, and the asymptotic behavior of these actions may be compared by means of
\[
\theta_{a}=\limsup_{q\to\infty \atop q\in\supp{\overline{a}}} \frac{\log|A_q|}{\log|\overline{a}_{q}|},
\]
where $A$ and $\overline{a}$ denote the sequences $J(a)$ and $M(a)$, respectively. Incidentally, it is useful to remark that
\begin{equation}\label{eq:inclMJ}
\supp{a}\cup\supp{A}\subseteq\supp{\overline{a}},
\end{equation}
and that $\supp{A}$ and $\supp{\overline{a}}$ coincide asymptotically whenever $\theta_{a}$ is finite, in the sense that they differ by a finite number of points only. The aforementioned assumption may now be expressed by means of the following definition.

\begin{df}
A Davenport series with coefficients given by a sequence $a\in\ell^{1}$ is asymptotically jump canceling if $\theta_{a}>1$.
\end{df}

More precisely, assuming that there is no cancellation in the sums defining the jumps sizes $A_{q}$ amounts to supposing that $\theta_{a}$ is bounded above by one, {\em i.e.}~that the Davenport series is not jump canceling. The next result shows that, in that situation, the upper bound given by Corollary~\ref{cor:upbndjumpDav} becomes an equality and may actually be replaced by an expression that is easier to handle. Specifically, the jump sizes $A_{q}$ arising in the bound may be replaced by the Davenport coefficients $a_{n}$ themselves, and the distance $\delta^{\Pcal}_{q}(x_{0})$ may be replaced by
\begin{equation}\label{eq:defdeltanx}
\delta_{n}(x_{0})=\dist\left(x_{0},\bigcup_{k\in\Z} H_{k,n}\right)=\frac{1}{|n|}\inf_{k\in\Z} |n\cdot x_{0}-k|,
\end{equation}
which means that we may discard the rather complicated coprimeness condition arising in~(\ref{eq:defdPqx0}) and discussed in Section~\ref{sec:upbnd}.

Recall that the H\"older exponent vanishes wherever the Davenport series is not continuous, {\em i.e.}~on the set defined by~(\ref{eq:defdisc}). This set is a union of hyperplanes which may be written in the form $\disc{J(a)}$, where for any odd sequence $b=(b_{q})_{q\in\Z^{d}}$,
\begin{equation}\label{eq:defDb}
\disc{b}=\bigcup_{(p,q)\in\Hcal_{d} \atop q\in\supp{b}} H_{p,q}.
\end{equation}
We may therefore restrict our attention to the points at which the series is continuous, {\em i.e.}~outside the set $\disc{J(a)}$. Actually, our approach only enables us to recover the H\"older exponent of the Davenport series outside the set $\disc{M(a)}$, which may be larger than $\disc{J(a)}$ in view of~(\ref{eq:inclMJ}). Yet, this slight restriction will not prevent us from performing the multifractal analysis of the Davenport series that are not jump canceling, because the two sets then differ by a finite number of hyperplanes only. In the next statement, as before, $A=(A_{q})_{q\in\Z^{d}}$ denotes the image of $a$ under the jump operator $J$.

\begin{thm}\label{thm:formhold}
Let $f$ be a Davenport series with coefficients given by a sequence $a\in\ell^{1}$. Let us assume that the series is sparse and not asymptotically jump canceling, that is,
\[
a\in\Scal \qquad\mbox{and}\qquad \theta_{a}\leq 1.
\]
Then, the H\"older exponent of $f$ at any fixed point $x_{0}\in\R^{d}$ satisfies
\begin{equation}\label{eq:bndholdAa}
h_{f}(x_{0})
\leq\liminf_{q\to\infty\atop q\in\supp{A}}\frac{\log|A_{q}|}{\log\delta^{\Pcal}_{q}(x_{0})}
\leq\liminf_{n\to\infty\atop n\in\supp{a}}\frac{\log|a_{n}|}{\log\delta_{n}(x_{0})}.
\end{equation}
Moreover, if $x_{0}$ does not belong to $\disc{M(a)}$, the above quantities coincide and the H\"older exponent may be computed using either of the following two formulae:
\[
h_{f}(x_{0})=\liminf_{q\to\infty\atop q\in\supp{A}}\frac{\log|A_{q}|}{\log\delta^{\Pcal}_{q}(x_{0})}
\qquad\mbox{and}\qquad
h_{f}(x_{0})=\liminf_{n\to\infty\atop n\in\supp{a}}\frac{\log|a_{n}|}{\log\delta_{n}(x_{0})}.
\]
\end{thm}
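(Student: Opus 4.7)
The proof decomposes into three parts, one for each assertion in~\eqref{eq:bndholdAa}.

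The first inequality is nothing but Corollary~\ref{cor:upbndjumpDav} applied to the series $f$, and requires neither the sparsity nor the non-canceling hypothesis.

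For the second inequality, I would, given $n \in \supp{a}$, construct a divisor $q$ of $n$ as follows: pick $k^{*} \in \Z$ realizing $\inf_{k}|n\cdot x_{0} - k|$, set $s = \gcd(k^{*}, n)$, $q = n/s$, $p = k^{*}/s$ (after a common sign change if needed so that $q \in \Z^{d}_{+}$). Then $(p,q) \in \Hcal_{d}$ and $H_{p,q} = H_{k^{*},n}$ is precisely the hyperplane with normal in $\Z n$ nearest to $x_{0}$, whence $\delta^{\Pcal}_{q}(x_{0}) \le \delta_{n}(x_{0})$. Moreover $\overline{a}_{q} \ge |a_{sq}| = |a_{n}| > 0$, so $q \in \supp{\overline{a}}$, and the assumption $\theta_{a} \le 1$ yields, for every $\eps > 0$ and for $|q|$ large enough, $|A_{q}| \ge |\overline{a}_{q}|^{1+\eps} \ge |a_{n}|^{1+\eps}$, forcing in particular $q \in \supp{A}$. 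Taking logarithms and using that $\log|A_{q}|$, $\log|a_{n}|$, $\log\delta^{\Pcal}_{q}(x_{0})$ and $\log\delta_{n}(x_{0})$ are all negative, one obtains
\[
\frac{\log|A_{q}|}{\log\delta^{\Pcal}_{q}(x_{0})} \;\le\; (1+\eps)\,\frac{\log|a_{n}|}{\log\delta_{n}(x_{0})}.
\]
Passing to a sequence $(n_{k})$ realizing the liminf on the right and letting $\eps \to 0$ gives the second inequality; one also has to ensure that $|q_{k}| \to \infty$ so that the $q_{k}$'s feed into the liminf on the left, which combines sparsity of $\supp{a}$ with the non-canceling condition (the latter precluding $A_{q}$ from vanishing for large $q$), but this is not the main obstacle.

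Under the extra assumption $x_{0} \notin \disc{M(a)}$, the two inequalities already proved reduce the equality claim to the lower bound $h_{f}(x_{0}) \ge \liminf_{n\to\infty,\,n\in\supp{a}}\log|a_{n}|/\log\delta_{n}(x_{0})$. Fix $\alpha < \beta$ strictly below this liminf. The hypothesis forces $\delta_{n}(x_{0}) > 0$ for every $n \in \supp{a}$, because every hyperplane $H_{k,n}$ with $n \in \supp{a}$ coincides with some $H_{p,q}$ having $\overline{a}_{q} \ge |a_{n}| > 0$, hence $|a_{n}| \le \delta_{n}(x_{0})^{\beta}$ for all but finitely many $n \in \supp{a}$. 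Setting $r = |x - x_{0}|$ and using that $|\{n\cdot x\} - \{n\cdot x_{0}\}|$ is at most $|n|r$ when $\delta_{n}(x_{0}) > 2r$ (the sawtooth being affine on $\opball{x_{0}}{r}$) and at most $1$ otherwise, the Davenport sum splits according to the sign of $\delta_{n}(x_{0}) - 2r$. Combining $|a_{n}| \le \delta_{n}(x_{0})^{\beta}$ with a dyadic decomposition in $|n|$ and in $\delta_{n}(x_{0})$, and with the sparsity bound $\#(\supp{a} \cap \opball{0}{R}) = O(R^{\eta})$ valid for every $\eta > 0$, should produce $|f(x) - f(x_{0})| \le C r^{\alpha}$. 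For $\alpha \ge 1$, the constant $f(x_{0})$ has to be replaced by a Taylor-type polynomial, well-defined precisely because $x_{0}$ avoids the relevant hyperplanes.

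The crux of the argument lies in this last step: the derivative-like factor $|n|$ coming from the affine piece of the sawtooth competes with the Diophantine smallness of $\delta_{n}(x_{0})$, and the transition zone $\delta_{n}(x_{0}) \asymp r$, where neither the affine nor the trivial bound is sharp, is likely the most delicate point of the proof.
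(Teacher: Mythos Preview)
Your outline is essentially correct and follows the paper's own strategy; the three parts line up with the paper's Section~\ref{sec:proofthmformhold} almost step for step. Two remarks on where the paper's organization differs from yours and why it matters.

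\medskip

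\textbf{Second inequality.} Your construction $n\mapsto q=n/\gcd(k^{*},n)$ is the right idea, but the point you flag---that $|q_{k}|\to\infty$ along a minimizing sequence $(n_{k})$---is not a triviality, and sparsity of $\supp{a}$ does not settle it directly (infinitely many $n_{k}$ could be multiples of a single $q$). The paper sidesteps this by inserting the maximal sequence $\overline{a}=M(a)$ as an intermediary: it first uses $\theta_{a}\le 1$ to pass from $A$ to $\overline{a}$ (both liminfs taken over $\supp{\overline{a}}$, since $\supp{A}$ and $\supp{\overline{a}}$ differ only finitely), then shows that replacing $\delta^{\Pcal}_{q}$ by $\delta_{q}$ leaves the $\overline{a}$-liminf unchanged, and finally uses $|a_{n}|\le\overline{a}_{n}$ together with $\supp{a}\subseteq\supp{\overline{a}}$ to land on the $a$-liminf. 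This chain never requires tracking where a particular $q$ came from.

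\medskip

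\textbf{Lower bound.} Your dichotomy $\delta_{n}(x_{0})\gtrless 2r$ is in fact equivalent to the paper's decomposition: since $\delta_{n}(x_{0})\le 1/(2|n|)$ always, the condition $\delta_{n}(x_{0})>2r$ already forces $|n|<1/(4r)\sim 2^{j_{0}}$, so your ``affine'' region is contained in the paper's $\Ncal^{\rm c}_{x_{0},x}\cap\{|n|<2^{j_{0}}\}$, and your ``trivial-bound'' region covers both the tail $|n|\ge 2^{j_{0}}$ and the near-hyperplane part $\Ncal_{x_{0},x}\cap\{|n|<2^{j_{0}}\}$. The paper treats these two subregions separately---the tail via $|a_{n}|\ll|n|^{-\gamma}$ plus sparsity, the near-hyperplane part via $|a_{n}|\ll\delta_{n}(x_{0})^{\alpha'}$ plus sparsity---and explicitly splits the affine region into the cases $\gamma_{a}\le 1$ and $\gamma_{a}>1$, the latter producing the linear correction $(x-x_{0})\cdot\sum_{n}a_{n}n$. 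Your ``Taylor-type polynomial'' is exactly this linear term; note that it is globally defined (the series $\sum a_{n}n$ converges once $\gamma_{a}>1$ by sparsity), not built from local data at $x_{0}$.
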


The proof of Theorem~\ref{thm:formhold} is postponed to Section~\ref{sec:proofthmformhold} for the sake of clarity. In the course of the proof, we obtain a uniform bound on the H\"older exponent, which may be seen as a consequence of Theorem~\ref{thm:formhold}, and which we state now as a separate result for future reference.

\begin{cor}\label{cor:bndgamma}
Let $f$ be a Davenport series with coefficients given by a sequence $a=(a_{n})_{n\in\Z^{d}}$ in $\ell^{1}$. If the series is sparse and not asymptotically jump canceling, then
\[
\forall x_{0}\in\R^{d} \qquad h_{f}(x_{0})\leq\gamma_{a},
\]
where $\gamma_{a}$ is defined by~(\ref{eq:defgamma}). In particular, the H\"older exponent of $f$ vanishes everywhere when the sequence $a$ has slow decay.
\end{cor}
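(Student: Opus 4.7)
The corollary is essentially a direct consequence of the second inequality in Theorem~\ref{thm:formhold}, combined with a trivial lower bound on $|\log\delta_{n}(x_{0})|$ coming from the definition of the sawtooth function. My plan is as follows.

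The first step is to invoke Theorem~\ref{thm:formhold}, which gives
\[
h_{f}(x_{0})\leq\liminf_{n\to\infty\atop n\in\supp{a}}\frac{\log|a_{n}|}{\log\delta_{n}(x_{0})}
\]
for every $x_{0}\in\R^{d}$, under the standing hypotheses that $a\in\Scal\cap\ell^{1}$ and $\theta_{a}\leq 1$. Note that this inequality holds at every point (the restriction to $x_{0}\notin\disc{M(a)}$ in Theorem~\ref{thm:formhold} is only needed for the reverse inequality).

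The second step is the elementary observation that, by the definition~(\ref{eq:defdeltanx}) of $\delta_{n}(x_{0})$, the infimum $\inf_{k\in\Z}|n\cdot x_{0}-k|$ is bounded above by $1/2$, so that
\[
\delta_{n}(x_{0})\leq\frac{1}{2|n|}\qquad\text{for every }n\in\ZD.
\]
In particular, for $|n|\geq 1$ we have $\log\delta_{n}(x_{0})\leq-\log|n|$ (after dropping the $\log 2$ which does not affect the liminf). If $\delta_{n}(x_{0})=0$ for infinitely many $n\in\supp{a}$, then each corresponding term of the liminf is $0$, forcing $h_{f}(x_{0})=0\leq\gamma_{a}$ and the desired bound holds trivially. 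Otherwise, for all but finitely many $n\in\supp{a}$ we have $0<\delta_{n}(x_{0})<1$, and since $a\in\ell^{1}$ forces $|a_{n}|<1$ eventually, both $\log|a_{n}|$ and $\log\delta_{n}(x_{0})$ are negative for large $n\in\supp{a}$. Hence
\[
\frac{\log|a_{n}|}{\log\delta_{n}(x_{0})}=\frac{-\log|a_{n}|}{-\log\delta_{n}(x_{0})}\leq\frac{-\log|a_{n}|}{\log|n|}.
\]

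Taking the liminf over $n\in\supp{a}$ with $|n|\to\infty$ on both sides and combining with the definition~(\ref{eq:defgamma}) of $\gamma_{a}$ yields
\[
h_{f}(x_{0})\leq\liminf_{n\to\infty\atop n\in\supp{a}}\frac{\log|a_{n}|}{\log\delta_{n}(x_{0})}\leq\liminf_{n\to\infty\atop n\in\supp{a}}\frac{-\log|a_{n}|}{\log|n|}=\gamma_{a},
\]
which is exactly the claimed uniform bound. For the final assertion, recall that by the discussion following Definition~\ref{df:slowdecay}, the sequence $a\in\Scal$ has slow decay on its support if and only if $\gamma_{a}=0$; since $h_{f}$ takes values in $[0,\infty]$, this forces $h_{f}(x_{0})=0$ everywhere.

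There is no real obstacle here: the only mild subtlety is the handling of points $x_{0}$ lying on a hyperplane $H_{k,n}$ with $n\in\supp{a}$ (where $\log\delta_{n}(x_{0})=-\infty$), but as observed above, this degenerate case makes the right-hand side vanish and the bound becomes trivial.
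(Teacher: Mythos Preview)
Your proof is correct and follows essentially the same route as the paper: in the proof of Theorem~\ref{thm:formhold} (Section~\ref{sec:proofthmformhold}), the authors derive the corollary by combining the bound~(\ref{eq:bndholdAa}) with the trivial estimate $\delta_{n}(x_{0})\leq 1/|n|$ and the definition~(\ref{eq:defgamma}) of $\gamma_{a}$. You are in fact slightly more careful than the paper in handling the degenerate case $\delta_{n}(x_{0})=0$, which the paper leaves implicit.
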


\section{Implications for multifractal analysis}\label{sec:multifrac} 

The preceding results will allow us to perform the multifractal analysis of some multivariate Davenport series with coefficients $a\in \ell^{1}$. From now on, {\em we assume that the series is sparse and not asymptotically jump canceling}. We begin by describing the size properties of the iso-H\"older sets, which are formed of the points where $f$ has H\"older exponent equal to a given $h$, specifically,
\begin{equation}\label{eq:dfisohold}
E_{f}(h)=\{x\in\R^{d}\:|\:h_{f}(x)=h\},
\end{equation}
for $h\in[0,\infty]$. To be precise, we compute the local spectrum of singularities of the series $f$, that is, the mapping
\begin{equation}\label{eq:dflocspec}
d_{f}(h,W)=\Hdim(E_{f}(h)\cap W),
\end{equation}
where $W$ is a nonempty open subset of $\R^{d}$. In the previous formula, $\Hdim$ denotes Hausdorff dimension, whose definition is recalled in Section~\ref{subsec:sliLaalpha}. The spectrum is actually governed by the parameter $\gamma_{a}$ which controls the decay of the sequence $a$ and is defined by~(\ref{eq:defgamma}).

In view of Corollary~\ref{cor:bndgamma}, the Davenport series $f$ has H\"older exponent at most $\gamma_{a}$ everywhere. Thus, all the iso-H\"older sets $E_{f}(h)$, for $h>\gamma_{a}$, are empty. As a consequence, the spectrum of singularities is supported on $[0,\gamma_{a}]$, and we may restrict our attention to that interval in what follows. In addition, if $\gamma_{a}$ vanishes, {\em i.e.}~when the sequence $a$ has slow decay, then the H\"older exponent of $f$ vanishes everywhere, so that all the iso-H\"older sets are empty, except $E_{f}(0)$ which is equal to the whole space $\R^{d}$. That situation being trivial, we may assume from now on that {\em $\gamma_{a}$ is positive}.

The analysis below does not cover the case where $\gamma_{a}$ is infinite. Note that this case includes that in which the sequence $a$ has finite support. In that situation, the Davenport series is a finite sum of piecewise linear functions, thereby being smooth except on a locally finite union of hyperplanes where its H\"older exponent vanishes. If $\gamma_{a}$ is infinite and the support of $a$ has infinite cardinality, the arguments below only imply that the H\"older exponent of the Davenport series is infinite Lebesgue-almost everywhere in $\R^{d}$, and that the iso-H\"older sets associated with finite values of the exponent all have Hausdorff dimension at most $d-1$. It seems plausible, though, that the dimension is exactly $d-1$. In what follows, we shall therefore assume that {\em $\gamma_{a}$ is finite}.

The local spectrum of singularities of $f$ on the interval $[0,\gamma_{a}]$ is then given by the following statement. 

\begin{thm}\label{thm:spec}
Let $f$ be a Davenport series with coefficients given by a sequence $a\in \ell^{1}$. Let us assume that the series is sparse and not asymptotically jump canceling, and that $0<\gamma_{a}<\infty$. Then, for any real $h\in[0,\gamma_{a}]$ and any nonempty open subset $W$ of $\R^{d}$,
\[
d_{f}(h,W)=d-1+\frac{h}{\gamma_{a}}.
\]
\end{thm}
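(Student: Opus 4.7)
The plan is to derive Theorem~\ref{thm:spec} from the pointwise formula of Theorem~\ref{thm:formhold} by recasting the iso-H\"older sets as level sets of a Diophantine-type exponent and computing their Hausdorff dimensions via a covering argument (upper bound) together with Falconer's theory of sets of large intersection and a ubiquity statement (lower bound). The exceptional set $\disc{M(a)}$ where Theorem~\ref{thm:formhold} does not give equality is a countable union of hyperplanes, hence of dimension at most $d-1$, so it only contributes the baseline dimension. Outside it, Theorem~\ref{thm:formhold} reads $h_f(x_0) = 1/\sigma_a(x_0)$ with $\sigma_a(x_0) = \sup\{\sigma > 0 : \delta_n(x_0) < |a_n|^\sigma \text{ for infinitely many } n \in \supp{a}\}$. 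Setting
\[
G_\sigma = \bigl\{x \in \R^d : \delta_n(x) \leq |a_n|^\sigma \text{ for infinitely many } n \in \supp{a}\bigr\},
\]
the iso-H\"older set $E_f(h)$ coincides, modulo $\disc{M(a)}$, with $\bigcap_{\sigma < 1/h} G_\sigma \setminus \bigcup_{\sigma > 1/h} G_\sigma$.

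For the upper bound I would cover $G_\sigma \cap W$ by the $|a_n|^\sigma$-neighborhoods of the hyperplanes $H_{k,n}$, $n \in \supp{a}$. For each $n$, at most $O(|n|\,\diam{W})$ values of $k$ produce a hyperplane meeting $W$, and each such neighborhood within $W$ is covered by $O(|a_n|^{-\sigma(d-1)})$ balls of radius $|a_n|^\sigma$. The $s$-dimensional Hausdorff content of the tail beyond $|n| \geq N$ is then controlled by a constant times
\[
\sum_{n \in \supp{a},\,|n|\geq N} |n|\,|a_n|^{\sigma(s-(d-1))},
\]
which tends to $0$ whenever $\sigma(s-(d-1)) > 1/\gamma_a$, by the characterization~(\ref{eq:defgammabis}). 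Hence $\Hdim(G_\sigma \cap W) \leq d - 1 + 1/(\sigma\gamma_a)$; letting $\sigma \nearrow 1/h$ yields $d_f(h,W) \leq d - 1 + h/\gamma_a$, and the same estimate applied at $\sigma > 1/h$ shows that each $G_\sigma$ appearing in $\bigcup_{\sigma > 1/h} G_\sigma$ has Hausdorff dimension \emph{strictly} less than $d - 1 + h/\gamma_a$.

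For the matching lower bound the plan is to prove that $G_{1/h}$ belongs to Falconer's large intersection class $\lic{d-1+h/\gamma_a}{W}$ for every nonempty open $W$. This is done in two steps. First, a ubiquity statement: for every $\gamma > \gamma_a$, $G_{1/\gamma}$ has full Lebesgue measure in $\R^d$. Because $\supp{a}$ is sparse the events $\{\delta_n(x) \leq |a_n|^{1/\gamma}\}$ are not independent and no classical Khintchine-type argument applies directly; instead a quasi-independent second Borel--Cantelli argument is needed, the pairwise overlaps being controlled by an arithmetic input of Duffin--Schaeffer / Catlin type, exactly as heralded in the introduction. Second, a mass transference principle for hyperplane systems of Beresnevich--Velani type converts this full-measure statement at rate $|a_n|^{1/\gamma}$ into membership of $G_{1/h}$ in $\lic{d-1+h/\gamma_a}{W}$ after letting $\gamma \searrow \gamma_a$. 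Falconer's framework then supplies the final ingredient: since each $G_{1/h+1/m}$ has Hausdorff dimension strictly less than $d-1+h/\gamma_a$ by the covering estimate above, the countable removal $G_{1/h} \setminus \bigcup_m G_{1/h+1/m}$ remains in the class $\lic{d-1+h/\gamma_a}{W}$, is contained in $E_f(h) \cap W$ modulo $\disc{M(a)}$, and therefore has Hausdorff dimension at least $d-1+h/\gamma_a$.

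The endpoint cases fall out more easily: for $h=0$ the hyperplanes in $\disc{J(a)}$ are dense and of dimension $d-1$, giving $d_f(0,W) \geq d-1$; for $h=\gamma_a$ the covering bound, applied with $\sigma \searrow 1/\gamma_a$, forces $\{h_f < \gamma_a\}$ to have Lebesgue measure zero and hence $d_f(\gamma_a,W)=d$. The main obstacle is the ubiquity step: establishing a full-measure limsup at the \emph{critical} rate $|a_n|^{1/\gamma_a}$ for a sparse family of hyperplanes is precisely the arithmetic difficulty connecting this work to the Duffin--Schaeffer and Catlin conjectures, and the sparsity together with the non-jump-canceling hypothesis is exactly what keeps the required overlap estimates tight.
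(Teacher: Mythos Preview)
Your overall architecture matches the paper's: recast the iso-H\"older sets via the limsup sets $G_\sigma$ (these are the paper's $L_a(1/\sigma)$), obtain the upper bound by the hyperplane covering, and obtain the lower bound by feeding a full-measure ubiquity statement into a mass-transference/large-intersection machine. The upper bound and the endpoints $h=0$, $h=\gamma_a$ are handled essentially as in the paper.

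Where you misread the landscape is in the ubiquity step. The claim ``$G_{1/\gamma}$ has full Lebesgue measure for every $\gamma>\gamma_a$'' needs no second Borel--Cantelli lemma and no Duffin--Schaeffer/Catlin input: since $\delta_n(x)\le 1/|n|$ always and, by the very definition~(\ref{eq:defgamma}) of $\gamma_a$, one has $|a_n|^{1/\gamma}\ge 1/|n|$ for infinitely many $n\in\supp{a}$, in fact $G_{1/\gamma}=\R^d$ outright; this is the paper's~(\ref{eq:LalphaRd}). The paper's discussion of Duffin--Schaeffer and Catlin is there to explain why a sharp converse to the covering lemma \emph{at the critical rate} $\gamma_a$ is out of reach, and then to \emph{bypass} it: Lemma~\ref{lem:existphistar} builds, by an elementary logarithmic interpolation along a subsequence realising $\gamma_a$, a gauge $\ph_\star\in\Phi$ with $s_{\ph_\star}=1$ such that $|n|\,\ph_\star(|a_n|^{1/\gamma_a})\ge 1$ infinitely often, so the $\ph_\star$-enlarged limsup set is again all of $\R^d$. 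That single gauge, plugged into the ubiquity theorem of~\cite{Durand:2008jk}, yields $L_a(\alpha)\in\lic{d-1,\ph}{\R^d}\subseteq\licfalc{d-1+\alpha/\gamma_a}$ directly for every $\alpha\le\gamma_a$, with no limiting procedure $\gamma\searrow\gamma_a$. Your route via $\gamma>\gamma_a$ would at best give membership in $\licfalc{d-1+h/\gamma}$ for each fixed $\gamma$, hence only the dimension bound after letting $\gamma\searrow\gamma_a$, not the sharp class.

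One further correction: the assertion that ``$G_{1/h}\setminus\bigcup_m G_{1/h+1/m}$ remains in the class $\licfalc{d-1+h/\gamma_a}$'' is not a legitimate move; Falconer's classes are stable under countable intersection with other class members, not under excising arbitrary lower-dimensional sets. The paper handles the passage from $E'_f(h)$ to $E_f(h)$ via Hausdorff measures instead: for a gauge $\psi$ with $s_\psi=h/\gamma_a$ one has $\hau^{d-1,\psi}(E'_f(h)\cap W)=\infty$ from the large intersection property, whereas each $L_a(\alpha)$ with $\alpha<h$ has $\hau^{d-1,\psi}$-mass zero by Lemma~\ref{lem:CShauzero}, so the difference still carries infinite $\hau^{d-1,\psi}$-mass.
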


Note that the spectrum of singularities does not depend on the particular region $W$ that is considered, and moreover it is nondegenerate, in the sense that its support is not reduced to a single point. Consequently, following the terminology of~\cite{Jaffard:2004fh}, the Davenport series falls in the category of {\em homogeneous multifractal functions}.

We get comparable results for the singularity sets, which are composed by the points where the Davenport series $f$ is continuous and has H\"older exponent at most a given $h$, that is,
\[
E'_{f}(h)=\{x\in\R^{d}\setminus\disc{J(a)}\:|\:h_{f}(x)\leq h\},
\]
where $\disc{J(a)}$ is the set of discontinuities of $f$, given by~(\ref{eq:defdisc}) and written using the notation~(\ref{eq:defDb}). In addition, we prove that the singularity sets belong to the category of sets with large intersection introduced by K.~Falconer~\cite{Falconer:1994hx}. This remarkable property essentially asserts that the sets are so omnipresent and large in a measure theoretic sense that their size properties are not altered by taking countable intersections. As a matter of fact, the intersection of countably many sets with large intersection with Hausdorff dimension at least a given real $s$ still has dimension at least $s$; this is in stark contradiction with the fact that the codimension of the intersection of two subsets is usually expected to be the sum of their codimension, as is the case for affine subspaces. Formally, the class of sets with large intersection are defined in~\cite{Falconer:1994hx} in the following manner. Recall that a $G_{\delta}$-set is a set that may be expressed as a countable intersection of open sets.

\begin{df}\label{def:licfalc}
For any $s\in(0,d]$, the class $\licfalc{s}$ of sets with large intersection with dimension at least $s$ is defined as the collection of all $G_{\delta}$-subsets $E$ of $\R^{d}$ such that
\[
\Hdim\bigcap_{n\geq 1}\varsigma_{n}(E)\geq s
\]
for any sequence $(\varsigma_{n})_{n\geq 1}$ of similarity transformations of $\R^{d}$.
\end{df}

The class $\licfalc{s}$ is closed under countable intersections and bi-Lipschitz transformations, and is the maximal class of $G_{\delta}$-sets with Hausdorff dimension at least $s$ that satisfies those properties, see~\cite[Theorem~A]{Falconer:1994hx} for a precise statement. Moreover, every set of the class $\licfalc{s}$ has packing dimension equal to $d$ in every nonempty open set, see~\cite[Theorem~D]{Falconer:1994hx}. In what follows, packing dimension is denoted by $\Pdim$; we refer for example to~\cite{Falconer:2003oj} for a definition of this notion.

Restricting to $G_{\delta}$-sets will be quite a constraint for us here, so instead of considering the classes $\licfalc{s}$ themselves, we shall work with the extended classes $\varlicfalc{s}$ defined by the following condition: For all $E\subseteq\R^{d}$,
\[
E\in\varlicfalc{s} \qquad\Longleftrightarrow\qquad \exists E'\in\licfalc{s} \quad E'\subseteq E.
\]
It is clear that the class $\varlicfalc{s}$ contains $\licfalc{s}$ and, in view of~\cite[Theorem~C(b)]{Falconer:1994hx}, that the $G_{\delta}$-sets that belong to $\varlicfalc{s}$ actually belong to the original class $\licfalc{s}$. Moreover, the extended class $\varlicfalc{s}$ naturally inherits from $\licfalc{s}$ its remarkable properties: $\varlicfalc{s}$ is composed of sets with Hausdorff dimension at least $s$ and packing dimension equal to $d$, and is closed under countable intersections and bi-Lipschitz transformations.

The next result describes the size and large intersection properties of the singularity sets of the Davenport series $f$.

\begin{thm}\label{thm:slisingsets}
Let $f$ be a Davenport series with coefficients given by a sequence $a=(a_{n})_{n\in\Z^{d}}$ in $\ell^{1}$. Let us assume that the series is sparse and not asymptotically jump canceling, and that $0<\gamma_{a}<\infty$. Then, for any real $h\in(0,\gamma_{a}]$,
\[
E'_{f}(h)\in\varlicfalc{d-1+h/\gamma_{a}}
\]
and, moreover, for any nonempty open subset $W$ of $\R^{d}$,
\[
\Hdim(E'_{f}(h)\cap W)=d-1+\frac{h}{\gamma_{a}} \qquad\mbox{and}\qquad \Pdim(E'_{f}(h)\cap W)=d.
\]
\end{thm}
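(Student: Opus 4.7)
The plan is to combine the sharp pointwise Hölder formula of Theorem~\ref{thm:formhold} with ubiquity-style results for limsup sets of hyperplane neighborhoods, in order to exhibit $E'_{f}(h)$ as a member of $\varlicfalc{d-1+h/\gamma_{a}}$ of exactly that Hausdorff dimension and of packing dimension $d$ on every open set.

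First, I would translate the statement $h_{f}(x_{0}) \leq h$ at a point $x_{0} \notin \disc{M(a)}$ into an infinite Diophantine-type approximation condition. By Theorem~\ref{thm:formhold}, the Hölder exponent at such a point equals $\liminf_{n \in \supp{a}} \log|a_{n}|/\log\delta_{n}(x_{0})$, and since $|a_{n}| \to 0$ and $\delta_{n}(x_{0}) \to 0$ along the relevant subsequences, this liminf is at most $h$ precisely when, for every $\eps > 0$, there are infinitely many $n \in \supp{a}$ with $\delta_{n}(x_{0}) < |a_{n}|^{1/(h+\eps)}$. Setting
\[
F_{\eps} = \limsup_{\substack{n \in \supp{a}\\|n| \to \infty}} \bigl\{ x \in \R^{d} : \delta_{n}(x) < |a_{n}|^{1/(h+\eps)} \bigr\},
\]
which is a $G_{\delta}$-set built from $|a_{n}|^{1/(h+\eps)}$-neighborhoods of the parallel affine hyperplanes $H_{k,n}$, one gets $E'_{f}(h) \supseteq \bigcap_{k \geq 1} F_{1/k} \setminus \disc{M(a)}$, with the reverse containment up to points in $\disc{M(a)}$, a countable union of hyperplanes that is negligible both for Hausdorff dimension and for membership in $\varlicfalc{s}$.

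Second, I would invoke the hyperplane-approximation large intersection result developed in Section~\ref{subsec:sliLaalpha}, which rests on Duffin--Schaeffer / Catlin-type machinery. For each $\eps > 0$, the critical exponent for $F_{\eps}$ is dictated by the series $\sum_{n} |n| \cdot |a_{n}|^{(s-d+1)/(h+\eps)}$: by the sparsity hypothesis and the characterization~(\ref{eq:defgammabis}) of $\gamma_{a}$, this series diverges exactly for $s < d-1+(h+\eps)/\gamma_{a}$. The divergence side, combined with the Catlin-type correction that reconciles the coprimeness condition on $(p,q) \in \Hcal_{d}$ with the full $n = lq$ indexing (this is where the non-asymptotic-cancellation assumption $\theta_{a} \leq 1$ enters, as in Theorem~\ref{thm:formhold}), yields $F_{\eps} \in \varlicfalc{d-1+(h+\eps)/\gamma_{a}}$. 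Since $\varlicfalc{s}$ is closed under countable intersections, taking $\eps = 1/k$ and intersecting gives $\bigcap_{k} F_{1/k} \in \varlicfalc{d-1+h/\gamma_{a}}$; removing the countable hyperplane set $\disc{M(a)}$ preserves membership because the resulting set still contains a $G_{\delta}$-set of the appropriate Falconer class. From the general properties of $\varlicfalc{s}$ one then reads off both $\Hdim(E'_{f}(h) \cap W) \geq d-1+h/\gamma_{a}$ and $\Pdim(E'_{f}(h) \cap W) = d$ on every nonempty open $W$, using \cite[Theorems~C(b) and~D]{Falconer:1994hx}.

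Third, to close the Hausdorff dimension computation I would give a matching upper bound by a direct covering of $F_{\eps} \cap W$ for small $\eps$: at level $n$, the $\psi_{n} = |a_{n}|^{1/(h+\eps)}$-neighborhood of each hyperplane $H_{k,n}$ meeting $W$ is covered by $O(\psi_{n}^{-(d-1)})$ balls of radius $\psi_{n}$, and only $O(|n|)$ such hyperplanes meet $W$, so the $s$-dimensional Hausdorff pre-measure beyond level $N$ is controlled by $\sum_{|n| \geq N} |n| \cdot |a_{n}|^{(s-d+1)/(h+\eps)}$, which converges for $s > d-1+(h+\eps)/\gamma_{a}$ again by~(\ref{eq:defgammabis}). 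Letting $\eps \to 0$ yields the sharp upper bound. The main obstacle is Step~2, namely proving the hyperplane-approximation large intersection statement with the precise critical exponent $d-1+h/\gamma_{a}$: this requires a full divergence-side Khintchine--Groshev theorem for systems of affine hyperplanes indexed by a sparse primitive direction set, lifted into the Falconer class framework, and it is where the paper's promised deep connections with sets of large intersection and the Duffin--Schaeffer / Catlin conjectures are doing the essential work.
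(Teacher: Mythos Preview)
Your overall architecture matches the paper's: translate $h_{f}\leq h$ into membership in limsup sets of hyperplane neighborhoods (your $F_{\eps}$ is the paper's $L_{a}(h+\eps)$, see~(\ref{eq:defLalpha}) and Lemma~\ref{lem:EhLalpha}), prove these sets lie in Falconer's large intersection classes, and get the matching upper bound by the direct covering you describe in Step~3 (this is exactly Lemma~\ref{lem:CShauzero}). One minor simplification: the paper uses the single set $L_{a}(h)\setminus\disc{J(a)}\subseteq E'_{f}(h)$ from~(\ref{eq:lem:EhLalpha2}) rather than your countable intersection $\bigcap_{k}F_{1/k}$, which is slightly cleaner but equivalent in effect.

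The genuine gap is in Step~2. You attribute the divergence-side large intersection property to ``Duffin--Schaeffer / Catlin-type machinery'' and a ``Catlin-type correction'' for coprimeness. The paper explicitly discusses why the natural Catlin-type statement~(\ref{eq:conjdivLalpha}) is out of reach, and then \emph{avoids it completely}. The actual key idea is elementary: Lemma~\ref{lem:existphistar} constructs, from the definition~(\ref{eq:defgamma}) of $\gamma_{a}$, a gauge function $\ph_{\star}\in\Phi$ with $s_{\ph_{\star}}=1$ such that $|n|\,\ph_{\star}(|a_{n}|^{1/\gamma_{a}})\geq 1$ for infinitely many $n$ in some coordinate sector $\Ncal_{i_{\star}}$. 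This makes the $\ph_{\star}$-enlarged set $L_{a}^{(\ph_{\star},i_{\star})}(\gamma_{a})$ equal to \emph{all} of $\R^{d}$ trivially, with no arithmetic input whatsoever. Lemma~\ref{lem:lipLalpha} then feeds this full-measure ubiquity system into the hyperplane ubiquity theorem of~\cite{Durand:2008jk} to conclude $L_{a}(\alpha)\in\licfalc{d-1+\alpha/\gamma_{a}}$. Two further misattributions in your Step~2: the hypothesis $\theta_{a}\leq 1$ plays no role here (it was consumed in Theorem~\ref{thm:formhold} to get the H\"older formula), and there is no coprimeness issue in the sets $L_{a}(\alpha)$, since Theorem~\ref{thm:formhold} already replaced $\delta^{\Pcal}_{q}$ by the unrestricted $\delta_{n}$. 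So your obstacle ``requires a full divergence-side Khintchine--Groshev theorem'' is not an obstacle at all; the paper's substitute is a short gauge-function interpolation argument.
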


We refer to Section~\ref{sec:proofthmmultifrac} for the proof of the two above theorems.

\section{Convergence and global regularity of Davenport series}\label{sec:sobol}

We will now give a few results concerning the convergence of Davenport series, when the sequence of coefficients does not belong to $\ell^{1}$. In that case, the sum does not necessarily belong to $L^{\infty}$, so that H\"older pointwise regularity may not be a relevant notion anymore. We will mainly consider convergence in Sobolev spaces, with both positive and negative indices, which allows us to consider simultaneously convergence in spaces of functions or, more generally, distributions. Specific additional motivations for this section are supplied in Section~\ref{sec:conclud}, where we show that the determination of global Sobolev regularity exponents are a preliminary step to either the determination of $L^{q}$ regularity (which is needed for the study of $p$-exponents, see Section~\ref{subsec:conclud3}) or the verification of the multifractal formalism (see the beginning of Section~\ref{sec:conclud}).

\subsection{Preliminaries on multivariate arithmetic functions}

An arithmetic function is traditionally a mapping defined on $\N$ and valued in $\R$ or sometimes in $\C$. The usual multivariate extension deals with functions that are defined on $\N^{d}$, see~\cite{Breteche:2001ys,Essou}. In this paper, we consider a slightly different setting, with {\em multivariate arithmetic functions} defined on $\ZD$.

A first simple example is supplied by the natural extension to $\ZD$ of the divisor function already mentioned in Section~\ref{sec:jumpop}; this extension is still denoted by $\tau$ for simplicity. To be specific, for any $m\in \ZD$, we define $\tau(m)$ as the number of decompositions
\begin{equation}\label{eq:decompmln}
m=ln \qquad\mbox{with}\qquad l\in\N \quad\mbox{and}\quad n\in\ZD.
\end{equation}
It is clear that $\tau(m)$ coincides with $\tau(\gcd(m))$, where $\gcd(m)$ denotes the greatest common divisor of the components of the vector $m$. With the help of~(\ref{eq:asymptau}), this implies that $\tau(m)={\rm o}(|m|^{\eps})$ as $m$ goes to infinity, for any fixed $\eps>0$. In what follows, we shall write indistinctly $l|m$ and $n|m$ when~(\ref{eq:decompmln}) holds; with a slight abuse, we shall also write $l=m/n$.

We will also make use of the extensions to the multivariate setting of other arithmetic functions, specifically, the sums of $z$-th powers of the divisors. Given $z\in\C$, recall that the one-dimensional arithmetic function $\sigma_{z}$ is defined by
\[
\sigma_{z}(m)=\sum_{n|m} n^{z},
\]
where the sum bears on the positive divisors of the integer $m$. In the multivariate case, we have to draw a difference between integer and vector divisors. Therefore, we define two functions of the vectors $m\in\ZD$ by
\[
\sigma_{z}(m)=\sum_{n\in\ZD \atop n|m} |n|^{z}
\qquad\mbox{and}\qquad
\widetilde{\sigma}_{z}(m)=\sum_{l\in\N \atop l|m} l^{z}.
\]
It is clear that these two functions coincide on $\N$, and that $\sigma_{0}(m)=\widetilde{\sigma}_{0}(m)=\tau(m)$ for all $m\in\ZD$. Moreover, for any $z\in\C$, one easily checks that $\widetilde{\sigma}_{z}(m)=\sigma_{z}(\gcd(m))$ and $\sigma_{z}(m)=|m|^{z}\widetilde{\sigma}_{-z}(m)=|m|^{z}\sigma_{-z }(\gcd(m))$. Given that $\sigma_{z}(l)=l^{z}\sigma_{-z}(l)$ for any integer $l\in\N$, we deduce that
\begin{equation}\label{eq:relsig}
\forall m\in\ZD \qquad \sigma_{z}(m)=\left(\frac{|m|}{\gcd(m)}\right)^{z} \sigma_{z}(\gcd(m)).
\end{equation}

Finally, recall that the Dirichlet convolution of two arithmetic functions $A$ and $B$ defined on $\N$ is the arithmetic function $A\ast B$ given by
\[
\forall m\in\N \qquad A\ast B(m)=\sum_{(l,n)\in\N\times\N \atop ln=m} A(n)B(l).
\]
Similarly, the convolution of a {\em multivariate} arithmetic functions $A$ defined on $\ZD$ and a {\em one-dimensional} arithmetic function $B$ defined on $\N$ is the multivariate arithmetic function given by
\[
\forall m\in\ZD \qquad A\ast B(m)=\sum_{(l,n)\in\N\times\ZD \atop ln=m} A(n)B(l).
\]

\subsection{Davenport expansions vs Fourier expansions} 

Let us now go back to Davenport series. Without any assumption on the sequence $a=(a_{n})_{n\in\Z^{d}}$ of Davenport coefficients, the right-hand side of~(\ref{eq:davfou}) can be inverted using the {\em multivariate M\"obius inversion formula}, which is an easy extension of the one-dimensional case and calls upon the M\"obius function $\mu$ already used in Section~\ref{sec:jumpop}. However, we start by proving it for the sake of completeness.

\begin{lem}\label{lem:invmob}
Let $f$ be a multivariate arithmetic function defined on $\ZD$ and let $g$ be the multivariate arithmetic function given by
\[
\forall m\in\ZD \qquad g(m)=\sum_{n\in\ZD \atop n|m} f(n).
\]
Then, the function $f$ can be recovered from $g$ by $f=g\ast\mu$, {\em i.e.}
\[
\forall m\in\ZD \qquad f(m)=\sum_{n\in\ZD \atop n|m} g(n)\mu\left(\frac{m}{n}\right).
\]
\end{lem}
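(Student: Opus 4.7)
The plan is to adapt the classical one-dimensional Möbius inversion argument, exploiting the fact that the divisor relation on $\ZD$ is always parametrized by positive integers: whenever $n\mid m$ in $\ZD$, the quotient $m/n$ is by the convention introduced just above the lemma a positive integer, so $\mu(m/n)$ is unambiguously defined. An alternative would be to reduce to the one-dimensional case by writing $m=\gcd(m)\, e_m$ with $e_m$ irreducible and transporting the classical inversion formula along the half-line $\N\, e_m$, but the direct double-sum computation seems cleaner.

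First I would substitute the defining formula for $g$ into the claimed right-hand side and swap the order of summation, rewriting
\[
\sum_{\substack{n\in\ZD\\ n\mid m}} g(n)\,\mu(m/n)
= \sum_{\substack{n'\in\ZD\\ n'\mid m}} f(n')\sum_{\substack{n\in\ZD\\ n'\mid n\mid m}}\mu(m/n).
\]
Next I would fix $n'\in\ZD$ with $n'\mid m$ and set $L=m/n'\in\N$. The vectors $n\in\ZD$ with $n'\mid n\mid m$ are exactly those of the form $n=l'\,n'$ where $l'\in\N$ is a positive divisor of $L$, and with this parametrization one has $m/n=L/l'$. Hence the inner sum equals
\[
\sum_{\substack{l'\in\N\\ l'\mid L}}\mu(L/l')
= \sum_{\substack{d\in\N\\ d\mid L}}\mu(d),
\]
and the classical identity $\sum_{d\mid L}\mu(d)=\ind_{\{L=1\}}$ shows that only the term $n'=m$ survives, the whole sum collapsing to $f(m)$, as required.

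I do not expect any real obstacle. The argument is nothing more than the standard Möbius inversion, and the multivariate nature of $f$ and $g$ plays no role once one observes that the combinatorics are governed solely by the positive integer $L=m/n'$, i.e.\ by the one-dimensional lattice of divisors of $\gcd(m)$ along the irreducible direction of $m$. The only step requiring some care is the bookkeeping implicit in the convention that, for $n,m\in\ZD$ with $n\mid m$, the quotient $m/n$ lies in $\N$; this is precisely what renders $\mu(m/n)$ meaningful and allows the classical identity to be invoked.
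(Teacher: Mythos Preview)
Your proposal is correct and follows essentially the same route as the paper: substitute the definition of $g$, interchange the two finite sums, parametrize the intermediate divisors $n$ with $n'\mid n\mid m$ as $n=l'n'$ with $l'\mid m/n'$, and invoke the classical identity $\sum_{d\mid L}\mu(d)=\ind_{\{L=1\}}$. The paper's argument is identical up to notation (it writes $k$ for your $n'$ and $l$ for your $l'$).
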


\begin{proof}
For any vector $m\in\ZD$, we have
\[
\sum_{n\in\ZD \atop n|m} g(n)\mu\left(\frac{m}{n}\right)
=\sum_{n\in\ZD \atop n|m} \mu\left(\frac{m}{n}\right)\sum_{k\in\ZD \atop k|n} f(k)
=\sum_{k\in\ZD \atop k|m} f(k)\sum_{n\in\ZD \atop k|n|m} \mu\left(\frac{m}{n}\right)
\]
Let us observe that the integer vectors $n\in\ZD$ satisfying $k|n|m$ are merely of the form $n=kl$, where $l$ ranges over the divisors of the positive integer $m/k$. Thus, the last sum satisfies
\[
\sum_{n\in\ZD \atop k|n|m}\mu\left(\frac{m}{n}\right)
=\sum_{l|(m/k)}\mu\left(\frac{m/k}{l}\right)
=\sum_{l|(m/k)}\mu(l)
=\ind_{\{k=m\}}.
\]
The last equality follows from the well-known fact that the sum of the M\"obius function over all positive divisors of a given natural number $n$ vanishes except if $n=1$, where the sum is equal to one. The result follows.
\end{proof}

The next proposition results from applying to~(\ref{eq:davfou}) the above inversion formula, and will be useful in the determination of the Sobolev regularity of Davenport series. It shows how to recover the Davenport coefficients of a series from the knowledge of its Fourier coefficients.

\begin{prp}
Let $f$ be a Davenport series with coefficients given by a sequence $a=(a_{n})_{n\in\Z^{d}}$ in $\ell^{1}$, and let $(c_{m})_{m\in\Z^{d}}$ denote the sequence of its Fourier coefficients. Then,
\[
\forall n\in\ZD \qquad a_{n}=-\pi\sum_{m\in\ZD \atop m|n} \frac{m}{n}\mu\left(\frac{n}{m}\right)c_{m}.
\]
\end{prp}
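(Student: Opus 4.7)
The starting point is formula~(\ref{eq:davfou}). Since $(a_{n})$ is odd, the involution $(l,n)\mapsto(-l,-n)$ on pairs satisfying $ln=m$ preserves the contribution $a_{n}/l$, so the terms with $l<0$ exactly duplicate those with $l>0$. Restricting to $l\in\N$ therefore gives
\[
c_{m}=-\frac{1}{\pi}\sum_{n\in\ZD,\, n\mid m}\frac{a_{n}}{m/n},
\]
where, for $n\mid m$ in $\ZD$, the scalar $m/n$ is the unique positive integer such that $m=(m/n)n$ and coincides with $\gcd(m)/\gcd(n)$ (the two vectors $n,m$ being automatically parallel, as multiples of a common primitive vector). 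Clearing denominators,
\[
-\pi\gcd(m)\,c_{m}=\sum_{n\in\ZD,\, n\mid m}\gcd(n)\,a_{n}.
\]

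This is a Dirichlet-type identity, ready-made for Lemma~\ref{lem:invmob}. The plan is to set $f(n)=\gcd(n)\,a_{n}$ and $g(m)=-\pi\gcd(m)\,c_{m}$, so that $g(m)=\sum_{n\mid m}f(n)$. Lemma~\ref{lem:invmob} then returns
\[
\gcd(n)\,a_{n}=\sum_{m\in\ZD,\, m\mid n}g(m)\,\mu(n/m)=-\pi\sum_{m\in\ZD,\, m\mid n}\gcd(m)\,c_{m}\,\mu(n/m),
\]
and dividing by $\gcd(n)$, while invoking once more the equality $\gcd(m)/\gcd(n)=m/n$ valid whenever $m\mid n$, yields the announced formula.

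I do not expect any real obstacle here: all sums are finite, since the divisors of any $v\in\ZD$ are in bijection with the positive integer divisors of $\gcd(v)$, so questions of convergence do not even arise (the hypothesis $a\in\ell^{1}$ is used only indirectly, to give meaning to the Fourier coefficients $c_{m}$ in the first place). The only non-routine observation is that the vectorial ratio $m/n$ equals the scalar ratio $\gcd(m)/\gcd(n)$ on parallel vectors; this identification is precisely what recasts the Fourier-Davenport coupling as a genuine Dirichlet convolution on $\ZD$, thereby unlocking the Möbius inversion supplied by Lemma~\ref{lem:invmob}.
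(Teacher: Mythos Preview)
Your argument is correct and follows essentially the same route as the paper: rewrite~(\ref{eq:davfou}) as a Dirichlet-type convolution and invert via Lemma~\ref{lem:invmob}. The only difference is cosmetic: the paper multiplies~(\ref{eq:davfou}) by the \emph{vector} $m$ to obtain $-\pi c_{m}m=\sum_{n\mid m}a_{n}n$ and then applies Lemma~\ref{lem:invmob} to the $\R^{d}$-valued functions $f(n)=a_{n}n$ and $g(m)=-\pi c_{m}m$ (remarking that the lemma extends trivially to vector values), whereas you multiply by the \emph{scalar} $\gcd(m)$ and work with real-valued $f(n)=\gcd(n)a_{n}$, $g(m)=-\pi\gcd(m)c_{m}$, so the lemma applies verbatim. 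Either normalization clears the denominator $m/n$ in the same way, and your identification $m/n=\gcd(m)/\gcd(n)$ is exactly the scalar shadow of the paper's vector identity $m/(m/n)=n$.
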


\begin{proof}
A straightforward consequence of~(\ref{eq:davfou}) is that for all $m\in\ZD$,
\[
-\pi c_{m}m=\sum_{n\in\ZD \atop n|m} a_{n}n.
\]
The result now follows from applying Lemma~\ref{lem:invmob} to the arithmetic functions $f(n)=a_{n}n$ and $g(m)=-\pi c_{m}m$. Note that these functions take values in $\R^d$ and not merely in $\R$. However, Lemma~\ref{lem:invmob} obviously extends to this case.
\end{proof}

\subsection{Regularity of the sum of a Davenport series}

Without any assumption on the odd sequence $a=(a_{n})_{n\in\Z^{d}}$ of Davenport coefficients, we may define an odd sequence $(c_{m})_{m\in\Z^{d}}$ with the help of~(\ref{eq:davfou}). This detour via Fourier series will allow us to study the convergence of the Davenport series $\sum_{n} a_{n}\{n\cdot x\}$, even when $a$ is no longer assumed to belong to $\ell^{1}$. Indeed, we shall see that, in many functional settings, when the associated Fourier series $\sum_{m} c_{m}\sin(2\pi m\cdot x)$ converges, then the partial sums of the Davenport series converge to the same limit.

In order to be more precise, let us begin by recalling that the spaces $\Fcal^{\gamma,-}$ are defined in terms of the sequence spaces $\Fcal^{\gamma}$ by means of~(\ref{eq:defFgammamin}). Moreover, let $F^{\gamma}$ denote the space of distributions whose Fourier coefficients belong to $\Fcal^{\gamma}$, and by $F^{\gamma,-}$ the space of distributions whose Fourier coefficients belong to $\Fcal^{\gamma,-}$.

In addition, for any odd sequence $a=(a_{n})_{n\in\Z^{d}}$, we denote by $f^{N}$ the partial sums of the corresponding Davenport series, that is,
\[
f^{N}(x)=\sum_{n\in\Z^{d} \atop |n|\leq N} a_{n}\{n\cdot x\}.
\]
The next result discusses the convergence properties of the sequence $(f^{N})_{N\geq 1}$ in the spaces $F^{\gamma}$ and $F^{\gamma,-}$. A noteworthy consequence lies in the fact that the Davenport series $\sum_{n} a_{n}\{n\cdot x\}$ converges in the sense of distributions when the coefficients $a_{n}$ do not increase faster than any polynomial.

\begin{prp}\label{prp:convpartialsums}
Let $\gamma\in\R$, and let $a\in\Fcal^{\gamma}$.
\begin{itemize}
\item If $\gamma<0$, then the sequence $(f^{N})_{N\geq 1}$ converges in $F^{\gamma,-}$ to a distribution $f$ which belongs to $ F^{\gamma}$.
\item If $0\leq\gamma\leq 2$, then the sequence $(f^{N})_{N\geq 1}$ is convergent in $F^{\min\{1,\gamma\},-}$.
\item if $\gamma>2$, then the sequence $(f^{N})_{N\geq 1}$ converges in $F^{1,-}$ to a distribution $f$ which belongs to $ F^{1}$.
\end{itemize}
\end{prp}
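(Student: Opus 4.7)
The plan is to work at the level of Fourier coefficients. Applying identity~(\ref{eq:davfou}) to the finite sum $f^{N}$ and reparametrizing via $n=m/l$ (which forces the nonzero integer $l$ to divide $\gcd(m)$), the Fourier coefficients of $f^{N}$ become
\[
c^{N}_{m}=-\frac{1}{2\pi}\sum_{\substack{l\in\Z^{\ast},\,l\mid\gcd(m)\\ |l|\geq|m|/N}}\frac{a_{m/l}}{l},
\]
and the natural candidate limit $c_{m}$ is obtained by dropping the constraint $|l|\geq|m|/N$. Using $|a_{n}|\leq|a|_{\Fcal^{\gamma}}|n|^{-\gamma}$, the tail is dominated by the full sum, so
\[
|c_{m}-c^{N}_{m}|\leq|c_{m}|\leq\frac{|a|_{\Fcal^{\gamma}}}{\pi\,|m|^{\gamma}}\,\widetilde{\sigma}_{\gamma-1}(m),
\]
and the tail vanishes identically when $|m|\leq N$ (for then $|l|\geq|m|/N$ is automatic). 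The whole proof therefore reduces to bounding $\widetilde{\sigma}_{\gamma-1}(m)=\sigma_{\gamma-1}(\gcd(m))$ according to the size of $\gamma$, then converting the $|m|$-decay into $N$-decay via the restriction $|m|>N$.

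The four relevant regimes correspond to four arithmetic estimates. For $\gamma<0$, since $\gamma-1<-1$, one has $\widetilde{\sigma}_{\gamma-1}(m)\leq\zeta(1-\gamma)$ uniformly in $m$, so $|c_{m}|=O(|m|^{-\gamma})$ and $c\in\Fcal^{\gamma}$. For $\gamma>2$, identity~(\ref{eq:relsig}) combined with $\sigma_{1-\gamma}(k)\leq\zeta(\gamma-1)$ yields $\widetilde{\sigma}_{\gamma-1}(m)\leq\zeta(\gamma-1)|m|^{\gamma-1}$, so $|c_{m}|=O(|m|^{-1})$ and $c\in\Fcal^{1}$. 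In the intermediate range, the crude termwise bound $l^{\gamma-1}\leq\max\{1,\gcd(m)^{\gamma-1}\}$ gives
\[
\widetilde{\sigma}_{\gamma-1}(m)\leq\gcd(m)^{\max\{0,\gamma-1\}}\tau(\gcd(m)),
\]
and~(\ref{eq:asymptau}) supplies $\tau(\gcd(m))={\rm o}(|m|^{\epsilon})$ for every $\epsilon>0$, so $|c_{m}|={\rm o}(|m|^{-\min\{1,\gamma\}+\epsilon})$.

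Combining these bounds with the constraint $|m|>N$ yields the required uniform decay. For $\gamma<0$, $\sup_{|m|>N}|m|^{\gamma'}|c_{m}-c^{N}_{m}|\leq CN^{\gamma'-\gamma}\to 0$ for any $\gamma'<\gamma$, giving convergence in $F^{\gamma,-}$ to a limit in $F^{\gamma}$. For $\gamma>2$, the analogous computation with exponent $\gamma'-1$ yields convergence in $F^{1,-}$ to a limit in $F^{1}$. In the intermediate range $0\leq\gamma\leq 2$, the supremum is bounded by $CN^{\gamma'-\min\{1,\gamma\}+\epsilon}$, which tends to $0$ as soon as $\epsilon<\min\{1,\gamma\}-\gamma'$, giving convergence in $F^{\min\{1,\gamma\},-}$. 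The main subtlety is structural rather than computational: recognizing that the crude inequality $|c_{m}-c^{N}_{m}|\leq|c_{m}|$ together with vanishing of the tail on $|m|\leq N$ already converts the decay of $|c_{m}|$ into uniform $N$-decay of the partial-sum error, so that no finer interpolation between a divisor bound and a range bound is needed.
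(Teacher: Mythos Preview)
Your proof is correct and follows essentially the same strategy as the paper's: bound the Fourier coefficients via $|a_n|\leq|a|_{\Fcal^\gamma}|n|^{-\gamma}$ and the resulting arithmetic divisor sum (your $\widetilde\sigma_{\gamma-1}(m)$ equals the paper's $\sigma_{1-\gamma}(m)/|m|^{1-\gamma}$, so the pointwise bounds are identical), then split according to the same three regimes for $\gamma$. One small writing slip: the displayed chain $|c_m-c^N_m|\leq|c_m|$ is false as stated, since the terms in the sum defining $c_m$ need not have constant sign; what you mean, and what suffices, is that both quantities are bounded by the common majorant $\frac{|a|_{\Fcal^\gamma}}{\pi|m|^\gamma}\widetilde\sigma_{\gamma-1}(m)$.

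Your convergence step is organized a bit more simply than the paper's. The paper extracts an explicit factor $N^{-\eps}$ from the tail by using $|n|>N\Rightarrow|n|^{-\gamma}\leq N^{-\eps}|n|^{-\gamma+\eps}$, which shifts the divisor sum to $\sigma_{1-\gamma+\eps}(m)$ and then reapplies the same arithmetic estimates. You avoid this shift entirely by noting that $c^N_m-c_m=0$ for $|m|\leq N$ and is bounded by the $|c_m|$-majorant for $|m|>N$, so the $N$-decay comes directly from $\sup_{|m|>N}|m|^{\gamma'-\gamma}$ (or the analogous exponent in the other regimes). Both arguments yield the same conclusion; yours is marginally more transparent.
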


\begin{proof}
It follows from~(\ref{eq:davfou}) that the Fourier coefficients of the partial sum $f^{N}$ are given by
\[
c^{N}_{m}=-\frac{1}{2\pi}\sum_{(l,n)\in\Z^{\ast}\times\ZD \atop ln=m, \; |n|\leq N}\frac{a_{n}}{l}.
\]
The condition $|n|\leq N$ is necessarily satisfied as soon as $N$ is greater than or equal to $|m|$, so that each sequence $(c^{N}_{m})_{N\geq 1}$ is ultimately constant equal to
\[
c_{m}=-\frac{1}{2\pi}\sum_{(l,n)\in\Z^{\ast}\times\ZD \atop ln=m}\frac{a_{n}}{l}.
\]
Moreover, given that the sequence $a$ belongs to the space $\Fcal^{\gamma}$, it is easy to check that for all $N\geq 1$ and $m\in\ZD$,
\[
|c^{N}_{m}|\leq\frac{|a|_{\Fcal^{\gamma}}}{\pi |m| }\sigma_{1-\gamma}(m),
\]
which implies in particular that for all $m\in\ZD$,
\begin{equation}\label{eq:estsig} 
|c_{m}|\leq\frac{|a|_{\Fcal^{\gamma}}}{\pi |m|}\sigma_{1-\gamma}(m).
\end{equation}

The following estimates on the one-variable arithmetic functions $\sigma_{z}(m)$ for $z\in\R$ may be found in~\cite{Sandor:1996fk}:
\[
\left\{\begin{array}{ccl}
z<-1 & \qquad\Longrightarrow\qquad & \sigma_{z}(m)={\rm O}(1) \\[1mm]
-1\leq z<0 & \qquad\Longrightarrow\qquad & \forall \eps>0 \quad \sigma_{z}(m)={\rm O}(m^{\eps}) \\[1mm]
0\leq z\leq 1 & \qquad\Longrightarrow\qquad & \forall \eps>0 \quad \sigma_{z}(m)={\rm O}(m^{z+\eps}) \\[1mm]
z>1 & \qquad\Longrightarrow\qquad & \sigma_{z}(m)={\rm O}(m^{z}).
\end{array}\right.
\]
Thanks to~(\ref{eq:relsig}), we easily deduce the following estimates on the corresponding multivariate functions:
\begin{equation}\label{eq:estimsig}
\left\{\begin{array}{ccl}
z<-1 & \qquad\Longrightarrow\qquad & \sigma_{z}(m)={\rm O}(1) \\[1mm]
-1\leq z<0 & \qquad\Longrightarrow\qquad & \forall \eps>0 \quad \sigma_{z}(m)={\rm O}(|m|^{\eps}) \\[1mm]
0\leq z\leq 1 & \qquad\Longrightarrow\qquad & \forall \eps>0 \quad \sigma_{z}(m)={\rm O}(|m|^{z+\eps}) \\[1mm]
z>1 & \qquad\Longrightarrow\qquad & \sigma_{z}(m)={\rm O}(|m|^{z}).
\end{array}\right.
\end{equation}
It now follows from (\ref{eq:estimsig}) that the $|c^{N}_{m}|$ satisfy the estimates of Proposition~\ref{prp:convpartialsums} uniformly in $N$, and their limits $|c_{m}|$ satisfy the same estimates.

Convergence in the corresponding function spaces follows immediately by applying the same approach to the differences $c^{N}_{m}-c_{m}$, starting from the observation that for any fixed $\eps>0$,
\[
|c^{N}_{m}-c_{m}|\leq\frac{|a|_{\Fcal^{\gamma}}}{\pi |m| N^{\eps}}\sigma_{1-\gamma+\eps}(m)
\]
for all $N\geq 1$ and $m\in\ZD$.
\end{proof}

Our purpose is now to determine in which Sobolev spaces the Davenport series with coefficients in the space $\Fcal^{\gamma}$ do converge. Let us recall that the Sobolev space $H^{s}$ is characterized by the following condition on the Fourier coefficients: A $\Z^{d}$-periodic odd distribution $f$ belongs to $H^{s}$ if the sequence $(c_{m})_{m\in\Z^{d}}$ of its Fourier coefficients satisfies
\[
|f|_{H^{s}}^{2}=\sum_{m\in\ZD} |c_{m}|^{2} |m|^{2s}<\infty.
\]
Note that, if $s<0$, this defines a space of distributions. In order to state sharp results, we shall also need the following slight modifications of $H^{s}$. Specifically, let $H^{s}_{\delta}$ be the space of all $\Z^{d}$-periodic odd distributions $f$ whose Fourier coefficients satisfy
\[
|f|_{H^{s}_{\delta}}^{2}=\sum_{m\in\ZD} |c_{m}|^{2}\frac{|m|^{2s}}{(1+\log |m|)^{\delta}}<\infty,
\]
and let $H^{s}_{\delta,+}$ and $H^{s,-}$ be the spaces defined respectively by
\[
H^{s}_{\delta,+}=\bigcap_{\eps>0} H^{s}_{\delta+\eps} \qquad\mbox{and}\qquad H^{s,-}=\bigcap_{\eps>0} H^{s-\eps}.
\]

Before proceeding, let us begin by observing that the Fourier coefficient indexed by $m=(m_{1},\ldots,m_{d})\in\ZD$ of the function which maps $x= (x_{1},\cdots,x_{d})$ to $\{x_{1}\}$ is equal to $\ind_{\{m_{2}=\ldots=m_{d}=0\}}/m_{1}$. Therefore, this function fails to belong $H^{1/2}$ but belongs to $H^{1/2}_{1,+}$. It follows that, no matter how large $\gamma$ is picked, we cannot expect substantially better results than convergence in $H^{1/2}_{1,+}$. In addition, note that if $s$ is less than $1/2$, then
\[
|\{n\cdot x\}|_{H^{s}}=\frac{|n|^{s}}{2\pi}\left(2\zeta(2(1-s))\right)^{1/2}.
\]
Thus, for any odd sequence $a=(a_{n})_{n\in\Z^{d}}$, the Davenport series $f$ defined by~(\ref{eq:dave}) has norm $|f|_{H^{s}}$ bounded above by $\sum_{n} |a_{n}|\,|n|^{s}$, up to a multiplicative constant. As a consequence,
\[
\forall s<1/2 \qquad \sum_{n\in\ZD} |a_{n}|\,|n|^{s}<\infty \quad\Longrightarrow\quad f\in H^{s}.
\]
We will now see how this straightforward result can be improved under the assumption that $a$ belongs to the space $\Fcal^{\gamma}$. We restrict our attention to the situation where $d\geq 2$, the one-dimensional case being thoroughly studied in~\cite{Jaffard:2004os}.

\begin{prp}\label{prp:convpartialsumsbis}
Let us assume that $d\geq 2$. Let $\gamma\in\R$, and let $a=(a_{n})_{n\in\Z^{d}}$ be a sequence in $\Fcal^{\gamma}$. Then, the sequence $(f^{N})_{N\geq 1}$ converges in the space
\begin{equation}\label{eq:sobolevgamma}
\begin{cases}
H^{\gamma-d/2}_{1,+} & \mbox{if } \gamma\leq 0\\[1mm]
H^{\gamma-d/2,-} & \mbox{if } 0<\gamma\leq 1\\[1mm]
H^{(1+\gamma-d)/2,-} & \mbox{if } 1<\gamma\leq 2\\[1mm]
H^{(1+\gamma-d)/2}_{1,+} & \mbox{if } 2<\gamma<d \mbox{ and } d\geq 3\\[1mm]
H^{1/2}_{2,+} & \mbox{if } \gamma=d\geq 3\\[1mm]
H^{1/2}_{1,+} & \mbox{if } \gamma>d.
\end{cases}
\end{equation}
\end{prp}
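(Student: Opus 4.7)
The plan is to read the Sobolev regularity off directly from the Fourier side, starting from the pointwise bound on Fourier coefficients
\[
|c^N_m|,\ |c_m|\ \leq\ \frac{|a|_{\Fcal^\gamma}}{\pi|m|}\sigma_{1-\gamma}(m)
=\frac{|a|_{\Fcal^\gamma}}{\pi|m|^\gamma}\widetilde\sigma_{\gamma-1}(m)
\]
already established in the proof of Proposition~\ref{prp:convpartialsums}, together with the parallel bound for $c^N_m-c_m$ that gains an extra factor $N^{-\eps}$. Since each of the spaces $H^s$, $H^s_\delta$, $H^s_{\delta,+}$, $H^{s,-}$ is characterized by a weighted $\ell^2$ condition on Fourier coefficients, Cauchy convergence of $(f^N)$ in each of the target spaces reduces to the finiteness of
\[
\sum_{m\in\ZD}|c_m|^2\,\frac{|m|^{2s}}{(1+\log|m|)^\delta}
\]
for the appropriate $s$ and $\delta$.

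The key reduction is the canonical decomposition $m=ru$ with $r=\gcd(m)\in\N$ and $u=m/r\in\irrd$, under which $\widetilde\sigma_{\gamma-1}(m)=\sigma_{\gamma-1}(r)$ and the double sum splits as a product:
\[
\sum_{m\in\ZD}|c_m|^2|m|^{2s}\ \leq\ \frac{|a|_{\Fcal^\gamma}^2}{\pi^2}\left(\sum_{r\geq1}\sigma_{\gamma-1}(r)^2\,r^{2s-2\gamma}\right)\left(\sum_{u\in\irrd}|u|^{2s-2\gamma}\right).
\]
A volume count for primitive lattice points in $\Z^d$ shows that the $u$-sum converges exactly when $s<\gamma-d/2$, while the $r$-sum is controlled, case by case, by the four asymptotic regimes of $\sigma_z$ collected in~(\ref{eq:estimsig}). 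I would then go through the six ranges of $\gamma$ listed in the statement: in the sub-critical regimes $\gamma\leq 1$, the $u$-sum is binding and gives the threshold $\gamma-d/2$; for $\gamma>1$, the growing factor $\sigma_{\gamma-1}(r)\sim r^{\gamma-1}$ imposes the complementary threshold $s=1/2$ on the $r$-sum, which is what drives the transition to the exponent $(1+\gamma-d)/2$ and eventually to the plateau at $s=1/2$ for $\gamma\geq d$.

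At each critical threshold a logarithmic correction is required; to handle it, I would split the $u$-sum according to whether $|u|\leq r$ or $|u|>r$, replacing $(1+\log|m|)^{-\delta}$ by $(1+\log r)^{-\delta}$ on the first piece and $(1+\log|u|)^{-\delta}$ on the second, then sum in $r$ against the $\sigma_{\gamma-1}(r)^2$-weighted tail. The extra $N^{-\eps}$ factor on $c^N_m-c_m$ then upgrades the resulting bound on $|f|_{H^s_\delta}$ into a genuine Cauchy estimate on $(f^N)$ in the corresponding Fr\'echet topology of $H^{s,-}$ or $H^s_{\delta,+}$.

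The main obstacle is the bookkeeping of logarithmic weights at the joint critical point $\gamma=d\geq 3$. There both factorized sums become critical simultaneously, so the simple-log arguments that work for $\gamma>d$ (yielding $H^{1/2}_{1,+}$) are no longer enough: the Dirichlet series $\sum_r\sigma_{d-1}(r)^2/r^{2d-1}$ carries a pole inherited from $\zeta(s-2(d-1))$, and this must be combined with the boundary divergence of $\sum_{u\in\irrd}|u|^{-d}$, so that the double logarithmic weight $(1+\log|m|)^{-(2+\eps)}$ is forced and accounts for the distinguished space $H^{1/2}_{2,+}$ appearing in the statement.
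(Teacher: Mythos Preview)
Your factorization $m=ru$ with $r=\gcd(m)$, $u\in\irrd$ is a genuinely different route from the paper's. The paper bounds $\sum_{2^j\leq|m|<2^{j+1}}\sigma_{1-\gamma}(m)^2$ crudely by $\mfrak_\gamma(2^{j+1})\,\sfrak_\gamma(2^{j+1})$, i.e.\ by $(\max)\times(\text{sum})$, and then sums the dyadic shells. Your product $\bigl(\sum_r\sigma_{\gamma-1}(r)^2 r^{2s-2\gamma}\bigr)\bigl(\sum_{u\in\irrd}|u|^{2s-2\gamma}\bigr)$ is exact for the unweighted Sobolev sum and therefore strictly sharper: for instance, in the range $2<\gamma<d$ it gives the threshold $\min(1/2,\gamma-d/2)$ rather than the paper's $(1+\gamma-d)/2$, and for $0<\gamma\leq 1$ it upgrades $H^{\gamma-d/2,-}$ to $H^{\gamma-d/2}_{1,+}$. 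Since the spaces in the statement are weaker, your approach certainly proves the proposition; it simply overshoots in several regimes. Your handling of the log weight by bounding $(1+\log|m|)^{-\delta}$ by $(1+\log r)^{-\delta}$ or $(1+\log|u|)^{-\delta}$ on the two pieces $|u|\lessgtr r$ is also fine.

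One point in your last paragraph is wrong, though harmless for the stated result. At $\gamma=d\geq 3$ and $s=1/2$, the two factorized sums are \emph{not} simultaneously critical: the $u$-sum carries the exponent $2s-2\gamma=1-2d$, and since $2d-1>d$ for $d\geq 2$ it converges outright. Only the $r$-sum $\sum_r\sigma_{d-1}(r)^2/r^{2d-1}$ is critical there, diverging like a single logarithm. So your method actually yields $H^{1/2}_{1,+}$ at $\gamma=d$ as well, stronger than the $H^{1/2}_{2,+}$ claimed; the double log in the statement is an artefact of the paper's $\mfrak_\gamma\sfrak_\gamma$ bound (where $\sfrak_d(x)=O(x\log x)$ contributes the extra log), not of the problem itself. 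Your reference to ``the boundary divergence of $\sum_{u\in\irrd}|u|^{-d}$'' has the wrong exponent and should be dropped.
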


\begin{proof}
It follows from Proposition~\ref{prp:convpartialsums} that the sequence $(f^{N})_{N\geq 1}$ converges to a distribution $f$. Moreover, the Fourier coefficients of $f$ may be bounded with the help of~(\ref{eq:estsig}), so that
\begin{equation}\label{eq:normsobolevf}
|f|_{H^{s}_{\delta}}^{2}\leq\frac{|a|_{\Fcal^{\gamma}}^{2}}{\pi^{2}}\sum_{m\in\ZD}\frac{|m|^{2(s-1)}}{(1+\log|m|)^\delta}\sigma_{1-\gamma}(m)^{2}.
\end{equation}
In order to estimate the above sum, let us split the index set as a union of dyadic domains. Specifically, the above sum is equal to the sum over all integers $j\geq 0$ of
\begin{equation}\label{eq:sumdyad}
\sum_{m\in\Z^{d} \atop 2^{j}\leq|m|<2^{j+1}}\frac{|m|^{2(s-1)}}{(1+\log|m|)^\delta}\sigma_{1-\gamma}(m)^{2}
\leq\frac{2^{2(s-1)(j+1)}}{(1+j)^{\delta}}\mfrak_{\gamma}(2^{j+1})\sfrak_{\gamma}(2^{j+1}).
\end{equation}
In the previous bound, $\mfrak_{\gamma}$ and $\sfrak_{\gamma}$ are defined respectively by
\[
\mfrak_{\gamma}(x)=\sup_{m\in\ZD \atop |m|<x}\sigma_{1-\gamma}(m)
\qquad\mbox{and}\qquad
\sfrak_{\gamma}(x)=\sum_{m\in\ZD \atop |m|<x}\sigma_{1-\gamma}(m),
\]
for any real $x>0$. The estimates~(\ref{eq:estimsig}) readily imply the following bounds on $\mfrak_{\gamma}$:
\[
\mfrak_{\gamma}(x)=
\begin{cases}
{\rm O}(x^{1-\gamma}) & \mbox{if } \gamma<0\\[1mm]
{\rm O}(x^{1-\gamma+\eps}) \mbox{ for all } \eps>0 & \mbox{if } 0\leq\gamma\leq 1\\[1mm]
{\rm O}(x^{\eps}) \mbox{ for all } \eps>0 & \mbox{if } 1<\gamma\leq 2\\[1mm]
{\rm O}(1) & \mbox{if } \gamma>2.
\end{cases}
\]
Let us now deal with $\sfrak_{\gamma}$. For all $x>0$, we have
\[
\sfrak_{\gamma}(x)
=\sum_{m\in\ZD \atop |m|<x}\sum_{n\in\ZD \atop n|m} |n|^{1-\gamma}
\leq\sum_{n\in\ZD \atop |n|<x} \frac{x}{|n|}|n|^{1-\gamma}
=x\sum_{n\in\ZD \atop |n|<x} |n|^{-\gamma},
\]
which readily implies that
\[
\sfrak_{\gamma}(x)
=\begin{cases}
{\rm O}(x^{d+1-\gamma}) & \mbox{if } \gamma<d\\[1mm]
{\rm O}(x\log x) & \mbox{if } \gamma=d\\[1mm]
{\rm O}(x) & \mbox{if } \gamma>d.
\end{cases}
\]

Combining the above bounds, we deduce that the sum in~(\ref{eq:sumdyad}) is bounded above, up to a multiplicative constant, by
\[
\begin{cases}
2^{(d+2(s-\gamma))j}/j^{\delta} & \mbox{if } \gamma<0\\[1mm]
2^{(d+2(s-\gamma)+\eps)j} \mbox{ for all } \eps>0 & \mbox{if } 0\leq\gamma\leq 1\\[1mm]
2^{(d+2s-1-\gamma+\eps)j} \mbox{ for all } \eps>0 & \mbox{if } 1<\gamma\leq 2\\[1mm]
2^{(d+2s-1-\gamma)j}/j^{\delta} & \mbox{if } 2<\gamma<d \mbox{ and } d\geq 3\\[1mm]
2^{(2s-1)j}/j^{\delta-1} & \mbox{if } \gamma=d\geq 3\\[1mm]
2^{(2s-1)j}/j^{\delta} & \mbox{if } \gamma>d.
\end{cases}
\]
These estimates are now sufficient to deduce that the limiting distribution $f$ belongs to the spaces given by~(\ref{eq:sobolevgamma}).

On top of that, convergence in the corresponding spaces follows immediately by applying the same approach to the differences $f^{N}-f$. As a matter of fact, for any fixed $\eta>0$, their Fourier coefficients satisfy
\[
|c^{N}_{m}-c_{m}|\leq\frac{|a|_{\Fcal^{\gamma}}}{\pi |m|}\left(\frac{1+\log|m|}{1+\log N}\right)^{\eta/2}\sigma_{1-\gamma}(m)
\]
for all $N\geq 1$ and $m\in\ZD$, which implies that~(\ref{eq:normsobolevf}) also holds when replacing $|f|_{H^{s}_{\delta}}^{2}$ by $(1+\log N)^{\eta}|f^{N}-f|_{H^{s}_{\delta+\eta}}^{2}$.
\end{proof}

\section{Concluding remarks and open problems}\label{sec:conclud}

The study of the local regularity of Davenport series remains a largely open field of investigations, with many interesting questions at the crossroad of number theory, harmonic analysis and functional analysis; our purpose in this section is to list a few of them that we believe of particular interest. A first one consists in the verification of the {\em multifractal formalism}. We shall not describe this question here, because its final and most precise formulation (in terms of {\em wavelet leaders}) requires the introduction of wavelet methods that go beyond the scope of the present paper; we refer to~\cite{Jaffard:2004fh} for a mathematical presentation concerning these issues, and to~\cite{AJW1} for a recent overview on the applications side. The verification of the multifractal formalism is a completely open problem for series of compensated pure jump functions, whether they be Davenport series (in one or several variables), or L\'evy processes and fields. Let us just mention that Section~\ref{sec:sobol} can be seen as a preliminary step in this direction; indeed, a part of this verification involves the determination of the Sobolev spaces that contain the function $f$ under consideration.

\subsection{Optimality of Lemma~\ref{lem:upbndjump}}\label{subsec:conclud1}

The only cases where we have been able to determine the exact pointwise H\"older regularity of the sum of a Davenport series are when the bound given by Lemma~\ref{lem:upbndjump} is optimal. This is not accidental and actually, in all cases of jump functions for which the H\"older exponent has been determined, it turns out that this bound is optimal: This is the case for L\'evy processes without Brownian component and their extension to the multivariate setting~\cite{Durand:2007fk,Durand:2010uq,Jaffard:1999fg}, for the few cases of Markov processes with nonstationary increments whose multifractal analysis has been performed~\cite{Barral:2010vn}, and for the other cases of Davenport series which can be worked out~\cite{Jaffard:2004os,Jaffard:2009ve,Jaffard:2010qf}. We shall however give below a simple example of Davenport series where this is not the case.

We shall restrict the discussion to the one-dimensional setting, which is easier to consider and is sufficient to explain why Lemma~\ref{lem:upbndjump} is not always sharp, even in the setting of Davenport series. Of course, in all generality,~(\ref{eq:lem:upbndjump}) is clearly not always sharp, as shown by the case in which $f$ is a continuous function, where the bound thus obtained is trivial. A natural class of functions for which one might expect optimality is supplied by {\em compensated pure jump functions}, that is, the functions $f$ whose derivative $f'$ in a distributional sense is of the form
\begin{equation}\label{eq:cpjf}
\sum_{n=1}^{\infty} \left(a_{n} \delta_{x_{n}} + c_n\right).
\end{equation}
In typical examples, the jump locations $x_{n}$ form a dense subset of the ambient space. However, even in the case where $f$ is a compensated pure jump function, the bound~(\ref{eq:lem:upbndjump}) need not be optimal, as shown by the following example of one-dimensional Davenport series:
\begin{equation}\label{eq:deffbeta}
f_{\beta}(x)=-\zeta(\beta)\{x\}+\sum_{n=1}^{\infty} \frac{\{nx\}}{n^{\beta}},
\end{equation}
where $\beta>3/2$. Indeed, the function $f_{\beta}$ is continuous at zero and jumps at every nonvanishing rational $p/q$ written in its irreducible form and the corresponding jump has magnitude $\Delta_{f_{\beta}}(p/q)$ equal to $\zeta(\beta)/q^{\beta}$. Thus, the bound~(\ref{eq:lem:upbndjump}) on its H\"older exponent at zero is realized by rational numbers of the form $1/q$, specifically,
\[
h_{f_{\beta}}(0)\leq\liminf_{q\to\infty}\frac{\log\Delta_{f_{\beta}}(1/q)}{\log(1/q)}=\beta.
\]
We shall prove the following result which shows that this bound is not optimal.

\begin{prp}\label{prp:regfbetazero}
Let $\beta$ be a real number larger than $3/2$ that is not an integer greater than or equal to $3$. Then, the value of the H\"older exponent of $f_{\beta}$ at zero is given by
\[
h_{f_{\beta}}(0)=\beta-1.
\]
\end{prp}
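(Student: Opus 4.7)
The plan is to establish $h_{f_\beta}(0) = \beta - 1$ by deriving a two-term asymptotic expansion of $f_\beta(x)$ as $x \to 0^+$ whose leading singular piece is of order $x^{\beta-1}$ (or $x \log(1/x)$ when $\beta = 2$) with a nonzero explicit coefficient, and then reading off the H\"older exponent using the oddness of $f_\beta$ together with the fact that, for every $\beta$ allowed by the hypothesis, this singular piece cannot be absorbed by a polynomial.

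The starting point is the identity
\[
f_\beta(x) = -\zeta(\beta)\,x + \sum_{n\geq 1} \frac{\langle nx\rangle}{n^\beta}, \qquad 0 < x < 1,
\]
where $\langle y\rangle = y - \lfloor y\rfloor$. I would split the series at $N = \lfloor 1/x\rfloor$: for $n \leq N$ we have $\langle nx\rangle = nx$, giving the piece $x\sum_{n=1}^N n^{1-\beta}$ whose Euler--Maclaurin expansion depends on whether $\beta$ is larger, equal to, or smaller than $2$; for $n > N$, I would partition the $n$'s by the value of $K = \lfloor nx\rfloor \geq 1$ and recognize the resulting sum in each strip as a Riemann sum with step $x$ approximating $x^{\beta-1}\int_0^1 t\,(K+t)^{-\beta}\,\mathrm{d}t$. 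Summing over $K \geq 1$ collapses these integrals into $x^{\beta-1}\int_1^\infty \{u\}/u^\beta\,\mathrm{d}u$, which equals $1/(\beta-2) - \zeta(\beta-1)/(\beta-1)$ by a classical identity (valid for all $\beta > 1$ with $\zeta$ continued analytically). Combining the two pieces yields
\[
f_\beta(x) = (\zeta(\beta-1) - \zeta(\beta))\,x - \frac{\zeta(\beta-1)}{\beta-1}\,x^{\beta-1} + o(x^{\beta-1})
\]
for $\beta \in (3/2,\infty) \setminus \{2,3,4,\ldots\}$, while $f_2(x) = x\log(1/x) + \mathrm{O}(x)$ in the borderline case $\beta = 2$.

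The lower bound $h_{f_\beta}(0) \geq \beta - 1$ is then immediate: taking $P_\beta$ to be the polynomial part of the expansion, one obtains $|f_\beta(x) - P_\beta(x)| = \mathrm{O}(|x|^{\beta-1})$ (up to a harmless $\log$ factor at $\beta = 2$), so $f_\beta \in C^\alpha(0)$ for every $\alpha < \beta - 1$. For the upper bound $h_{f_\beta}(0) \leq \beta - 1$, I would use that $f_\beta$ is odd with $f_\beta(0) = 0$, so the identity $f_\beta(x) + f_\beta(-x) = 0$ forces any polynomial $Q$ of degree strictly less than $\alpha$ witnessing $C^\alpha(0)$ regularity to be odd with no constant term. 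For every $\beta$ covered by the hypothesis the exponent $\beta - 1$ is either non-integer (when $\beta$ is non-integer) or accompanied by an unremovable $\log(1/x)$ factor (when $\beta = 2$), so no such polynomial $Q$ can cancel the leading singular term; it follows that $|f_\beta(x) - Q(x)|/|x|^\alpha$ is unbounded along $x \to 0^+$ whenever $\alpha > \beta - 1$.

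The main obstacle is the uniform control of the error in the per-strip Riemann-sum approximation: after summation over $K \geq 1$ the aggregate remainder must be shown to be $o(x^{\beta-1})$, and the exclusion of integer values $\beta \geq 3$ in the hypothesis reflects exactly the range where this step begins to produce an extra $x^{\beta-1}\log(1/x)$ term that would obstruct the clean identification of the leading coefficient (although the value of the H\"older exponent itself is plausibly still $\beta - 1$ in those excluded cases).
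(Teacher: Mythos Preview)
Your proposal is essentially correct and would yield the result, though you take a more elaborate route than the paper. Both approaches split the series at $N=\lfloor 1/x\rfloor$ and treat the initial block $x\sum_{n\le N}n^{1-\beta}$ in the same way. The difference lies in the tail $\sum_{n>N}$: you carry out a per-strip Riemann-sum analysis to extract the precise asymptotic $x^{\beta-1}\int_1^\infty\{u\}u^{-\beta}\,\mathrm{d}u$, whereas the paper simply bounds the tail crudely by $\sum_{n>N}n^{-\beta}\sim x^{\beta-1}/(\beta-1)$. The paper's key observation is that this crude bound, combined with the exact asymptotic of the initial block (whose coefficient is $1/|2-\beta|$), already gives a two-sided estimate $|f_\beta(x)-Lx|\asymp x^{\beta-1}$, because $1/|2-\beta|>1/(\beta-1)$ holds precisely when $\beta>3/2$. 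This sidesteps your Riemann-sum error analysis entirely and makes transparent the role of the threshold $3/2$. Your route buys you more information (the exact leading coefficient $-\zeta(\beta-1)/(\beta-1)$), at the cost of the uniform error control you flag as the main obstacle.

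One correction to your closing remark: the exclusion of integers $\beta\ge 3$ is not about extra logarithmic factors in the Riemann-sum error. The issue is that when $\beta-1$ is an \emph{odd} integer (that is, $\beta$ is an even integer $\ge 4$), the term $x^{\beta-1}$ is itself an odd monomial and can be absorbed into the approximating polynomial $Q$; determining the exponent would then require the next-order term, and one actually expects $h_{f_\beta}(0)>\beta-1$ in that case, contrary to your parenthetical. (When $\beta$ is an odd integer $\ge 3$, the oddness of $f_\beta$ prevents any cancellation of the even-degree term $x^{\beta-1}$ by an odd polynomial, so the paper's blanket exclusion is in fact stronger than necessary there.)
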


\begin{proof}
Given that the function $f_{\beta}$ is odd, it is sufficient to study the increment $f_{\beta}(x)-f_{\beta}(0)$ for positive values of $x$ only. If $x\in(0,1)$ and $n<1/x$, we have $\{nx\}=nx-1/2$. Letting $\lceil\,\cdot\,\rceil$ denote the ceiling function, we deduce that
\begin{align*}
f_{\beta}(x)-f_{\beta}(0)&=-\zeta(\beta)\left(x-\frac{1}{2}\right)+\sum_{n=1}^{\lceil 1/x\rceil-1} \frac{nx-1/2}{n^{\beta}}+\sum_{n=\lceil 1/x\rceil}^{\infty} \frac{\{nx\}}{n^{\beta}}\\
&=-\zeta(\beta)x+x\sum_{n=1}^{\lceil 1/x\rceil-1} \frac{1}{n^{\beta-1}}+\sum_{n=\lceil 1/x\rceil}^{\infty} \frac{\{nx\}+1/2}{n^{\beta}}.
\end{align*}

Let us first assume that $\beta<2$. While the first term is merely linear, it is easy to see that the second term is equivalent to $x^{\beta-1}/(2-\beta)$ as $x$ goes to zero. Concerning the third term, its absolute value may be bounded by the sum of $1/n^{\beta}$ over $n\geq\lceil 1/x\rceil$, which is equivalent to $x^{\beta-1}/(\beta-1)$. Given that $\beta>3/2$, the difference $|f_{\beta}(x)-f_{\beta}(0)|$ is thus of the order of $x^{\beta-1}$, and the result follows.

In the case where $\beta=2$, the second term in the above decomposition is equivalent to $x\log(1/x)$, and the upper bound on the third term is equivalent to $x$, so the result follows as well.

Finally, let us consider the case in which $\beta>2$. The upper bound on the third term above is again equivalent to $x^{\beta-1}/(\beta-1)$. Moreover, the second term may be rewritten as
\[
x\sum_{n=1}^{\lceil 1/x\rceil-1} \frac{1}{n^{\beta-1}}=\zeta(\beta-1)x-x\sum_{n=\lceil 1/x\rceil}^{\infty} \frac{1}{n^{\beta-1}},
\]
and the second term of the latter expression is equivalent to $x^{\beta-1}/(\beta-2)$. The result now follows because $\beta$ is not an integer.
\end{proof}

This example opens the possibility of studying the pointwise regularity of Davenport series for which the bound supplied by Lemma~\ref{lem:upbndjump} is not optimal. Beyond the study of particular functions at particular points, natural general open questions are the following. Under simple assumptions on the Davenport coefficients, can one show that the bound is optimal at a given point? everywhere? outside a set of dimension zero? or almost everywhere? Similar questions can also be raised in the more general setting of compensated pure jump functions. 

\subsection{Hecke's functions}

In dimension one, special attention has been paid to the study of very specific Davenport series, namely, Hecke's functions $\Hfrak_{\beta}$, which depend on a parameter $\beta\in\C$, and are defined by~(\ref{eq:defHecke}). Note that they can actually turn out to be distributions when the real part $\Re\beta$ is sufficiently small.

These functions have a rich history. They were first considered as functions of the complex variable $\beta$, the real number $x$ being merely a parameter. Hecke studied their analytic continuation, and his study was later extended by Hardy; these results showed that the range of analytic continuation depends on the Diophantine approximation properties of the parameter $x$. As a function of the real variable $x$, the spectrum of singularities was completely determined only in the case where $\Re\beta\geq 2$, which leaves open the case where $1<\Re\beta<2$, see~\cite{Jaffard:2004os} and~(\ref{eq:spechecke}) below. Note that the counterexample supplied in Section~\ref{subsec:conclud1} is closely related with Hecke's functions, and we refer to Section~\ref{subsec:conclud2} below for further connections. One could also consider multivariate extensions of these functions, specifically, the functions
\[
x\mapsto\sum_{n\in\ZD} \eps_n \frac{\{n\cdot x\}}{|n|^{\beta}},
\]
where $(\eps_{n})_{n\in\ZD}$ is an odd sequence taking the values $\pm 1$.

\subsection{Spectrum of singularities of compensated pure jumps functions}\label{subsec:conclud2}

We now go back to the general setting supplied by the compensated pure jump functions. The examples of such functions whose multifractal properties are known may be separated into two large classes. The first class corresponds to the case where the jump locations $x_{n}$ appearing in~(\ref{eq:cpjf}) are somehow homogeneously distributed; this is the case for many examples of Davenport series~\cite{Jaffard:2004os,Jaffard:2009ve,Jaffard:2010qf} or L\'evy fields and processes~\cite{Durand:2007fk,Durand:2010uq,Jaffard:1999fg}. The second class is composed of functions for which the jump locations $x_{n}$ form a strongly inhomogeneous sequence; such examples have been investigated by J.~Barral and S.~Seuret, and include L\'evy subordinators in multifractal time~\cite{Barral:2007ly} or heterogeneous sums of Dirac masses~\cite{Barral:2004fk,Barral:2005vn,Barral:2008zr}. In the heterogeneous case, the obtained spectra strongly differ from those which have been exhibited in the present paper. Indeed, they are usually composed of two parts: a linear one (for sufficiently small values of the H\"older exponent $h$), followed by a strictly concave one. Note however that spectra of a different kind have been obtained in~\cite{Barral:2010vn}; for some Markov processes which differ from L\'evy processes, one meets spectra that are a superposition of linear functions with different slopes.

In the homogeneous case, the spectra that have been met up to now are linear. However, this is not a general rule, even in the particular case of one-variable Davenport series. Let us consider for instance the function $f_{\beta}$ defined by~(\ref{eq:deffbeta}), and suppose that $\beta$ is a noninteger real number larger than two. The local spectrum of singularities of the corresponding Hecke function $\Hfrak_{\beta}$ defined by~(\ref{eq:defHecke}) is then supported by the interval $[0,\beta/2]$ and satisfies
\begin{equation}\label{eq:spechecke}
\forall h\in[0,\beta/2] \quad \forall W\neq\emptyset\mbox{ open} \qquad d_{\Hfrak_{\beta}}(h,W)=\frac{2h}{\beta}\,;
\end{equation}
this follows readily from the approach employed in~\cite{Jaffard:2004os} in order to compute the global spectrum of $\Hfrak_{\beta}$, which corresponds to the case where $W$ is equal to the whole real line. As shown by Proposition~\ref{prp:regfbetazero}, subtracting the term $\zeta(\beta)\{x\}$ to Hecke's function $\Hfrak_{\beta}(x)$ shifts the value of the H\"older exponent at the integers from zero to $\beta-1$. As a consequence, the local spectrum of singularities of the resulting function $f_{\beta}$ is now supported in the set $[0,\beta/2]\cup\{\beta-1\}$. Moreover, the function $f_{\beta}$ still satisfies~(\ref{eq:spechecke}) but, rather than being empty, its iso-H\"older set $E_{f_{\beta}}(\beta-1)$ is equal to $\Z$. Therefore, for any open subset $W$ of $\R$ that contains an integer,
\[
d_{f_{\beta}}(\beta-1,W)=0.
\]
In particular, the local spectrum of $f_{\beta}$ depends on the particular region that is considered. Thus, unlike the corresponding Hecke function $\Hfrak_{\beta}$, the function $f_{\beta}$ is not a homogeneous multifractal function. Furthermore, unlike that of $\Hfrak_{\beta}$, the global spectrum of singularities of $f_{\beta}$ is not a linear function. More specifically, the graph of this spectrum is the union of a segment and an isolated point. The multifractal properties of $f_{\beta}$ may be put in comparison with those of Riemann's function $\sum_{n} \sin(\pi n^{2}x)/n^{2}$ whose global spectrum of singularities has exactly the same shape and which is a homogeneous multifractal function, see~\cite{Jaffard:1996fk}.

This example raises several questions. Is there a simple condition on the coefficients of a Davenport series which ensures that its spectrum is linear? What is the general form of the spectrum of a Davenport series? Similar questions can also be asked in the more general setting of compensated pure jumps functions. Note that some results on these problems have been obtained by J.~Barral and S.~Seuret in the slightly different setting supplied by the {\em large deviation spectrum}, see~\cite{Barral:2007ve}.

\subsection{$p$-exponent}\label{subsec:conclud3} 

H\"older pointwise regularity is defined only for locally bounded functions, which explains why we always made the assumption that the sequence of Davenport coefficients belongs to $\ell^{1}$ when studying H\"older regularity. However, Proposition~\ref{prp:convpartialsumsbis} shows that, even when the sequence of Davenport coefficients does not belong to $\ell^{1}$, and therefore convergence in $L^{\infty}$ is no more guaranteed, one can obtain convergence in $L^{2}$ (this corresponds to the Sobolev space $H^{s}$ considered in Section~\ref{sec:sobol} in the case where $s$ is zero), and also for larger values of $p$; indeed when $s$ is positive, the Sobolev embeddings imply that, if $f$ belongs to $H^{s}$, then it also belongs to $L^{p}$ for all $p$ smaller than the critical value $p_{0}$ defined by the condition
\[
\frac{1}{p_{0}}=\frac{1}{2}-\frac{s}{d}.
\]
Note also that the specific case of $L^{2}$ convergence of one-variable Davenport series has already been considered, see~\cite{Bremont:2011fk,Jaffard:2004os}.

In such situations, one can still perform a pointwise analysis of regularity, by using a definition of pointwise smoothness which is weaker than H\"older regularity and is compatible with functions that are not locally bounded: It is the notion of $T^{p}_{\alpha}(x_{0})$ regularity, which was introduced by Calder\'on and Zygmund in~1961, see~\cite{Calderon:1961kx}. The next definition is an adaptation of Definition~\ref{df:hol} to that setting.

\begin{df}
Let $f$ be a tempered distribution on $\R^{d}$, let $p\in[1,\infty)$, let $\alpha>-d/p$ and let $x_{0}\in\R^{d}$. The distribution $f$ belongs to $T^{p}_{\alpha}(x_{0})$ if it coincides with an $L^{p}$ function in the open ball $\opball{x_{0}}{R}$ for some real $R>0$, and if there exist a real $C>0$ and a polynomial $P_{x_{0}}$ of degree less than $\alpha$ such that for all $r\in(0,R]$,
\[
\left( \frac{1}{r^{d}} \int_{\opball{x_{0}}{r}} |f(x)-P_{x_{0}}(x)|^{p} \,\dd x \right)^{1/p}
\leq C r^{\alpha}.
\]
The {\em $p$-exponent} of $f$ at $x_{0}$ is then defined as
\[
h^{p}_{f}(x_{0})=\sup\{ \alpha>-d/p \:|\: f \in T^{p}_{\alpha}(x_{0}) \}.
\]
\end{df}

Note that the H\"older exponent corresponds to the case where $p=\infty$, and the condition on the degree of $P_{x_{0}}$ implies its uniqueness. This definition is a natural substitute for pointwise H\"older regularity when functions in $L^{p}_{\rm loc}$ are considered. In particular, the $p$-exponent can take values down to $-d/p$, thereby allowing to take into account behaviors which are locally of the form $1/|x-x_{0}|^{\alpha}$ for $\alpha<d/p$.

Furthermore, similarly to~(\ref{eq:dfisohold}) and~(\ref{eq:dflocspec}), we may define the analogs of the iso-H\"older sets and the local spectrum of singularities by
\[
E^{p}_{f}(h)=\{x\in\R^{d}\:|\:h^{p}_{f}(x)=h\}
\qquad\mbox{and}\qquad
d^{p}_{f}(h,W)=\Hdim(E^{p}_{f}(h)\cap W),
\]
the latter quantity being referred to as the local $p$-spectrum of the distribution $f$.

Both in the univariate and the multivariate case, the subject of determining the $p$-exponents and the $p$-spectrum of a Davenport series with coefficients not belonging to $\ell^{1}$ is completely open.

\subsection{Directional regularity}

The notion of H\"older pointwise regularity given in Definition~\ref{df:hol} does not take into account directional regularity but yields the worst possible regularity in all directions. Therefore, all the results obtained in the present paper do not take into account possible directional irregularity phenomena.

We now briefly discuss the notion of directional regularity. Let $f$ be a locally bounded function defined on $\R^{d}$. In order to take into account directional behaviors, it is natural to define the H\"older regularity at $x_0$ in a direction $u\in\R^{d}\setminus\{0\}$ as the H\"older regularity at zero of the univariate function $t\mapsto f(x_{0}+t u)$. This definition has several drawbacks which stem from the fact that the latter function is defined as the trace of $f$ on a line, which is a set of measure zero, see~\cite{Jaffard:2010ys} for a detailed discussion. Let us now give the definition of anisotropic smoothness which is currently used, see {\em e.g.}~\cite{Ben-Slimane:1998uq,Jaffard:2010ys}.

\begin{df}
Let $f$ be a real-valued function defined on $\R^{d}$ and bounded in a neighborhood of a point $x_{0}\in\R^{d}$. Let $e=(e_{1},\ldots,e_{d})$ be an orthonormal basis of $\R^d$ and let $\alpha=(\alpha_{1},\ldots,\alpha_{d})$ be a $d$-tuple of nonnegative real numbers such that $\alpha_{1}\geq\ldots\geq\alpha_{d}$. The function $f$ belongs to $C^{\alpha}(x_{0},e)$ if there exist a real $C >0$ and a polynomial $P_{x_{0}}$ such that for all $x$ in a neighborhood of $x_{0}$,
\[
|f(x)-P_{x_{0}}(x)|\leq C\,\sum_{i=1}^{d}|(x-x_{0})\cdot e_{i}|^{\alpha_{i}}.
\]
\end{df}

Since, by construction, Davenport series display jumps along hyperplanes, thereby being extremely anisotropic by nature, a natural question is to determine their pointwise anisotropic regularity; the same remark is also relevant for L\'evy fields, which present the same type of anisotropy, see~\cite{Durand:2010uq}. These two examples would certainly be natural candidates to test possible definitions of {\em anisotropic spectra of singularities}.

Finally, note that an extension of pointwise smoothness combining anisotropy and the $T^{p}_{\alpha}(x_{0})$ condition is proposed in~\cite{Jaffard:2010ys}. This notion could be relevant in order to perform the study of the anisotropy of multivariate Davenport series with coefficients not belonging to $\ell^{1}$.

\section{Proof of Theorem~\ref{thm:formhold}}\label{sec:proofthmformhold}

Let us begin by comparing the two lower limits appearing in the statement of the theorem. First, given that $\theta_{a}$ is bounded above by one, we have
\[
\liminf_{q\to\infty\atop q\in\supp{A}}\frac{\log|A_{q}|}{\log\delta^{\Pcal}_{q}(x_{0})}
\leq\liminf_{q\to\infty\atop q\in\supp{\overline{a}}}\frac{\log|\overline{a}_{q}|}{\log\delta^{\Pcal}_{q}(x_{0})},
\]
where $A=J(a)$ and $\overline{a}=M(a)$. In addition, replacing $\delta^{\Pcal}_{q}(x_{0})$ by $\delta_{q}(x_{0})$ in the right-hand side does not change the value of the lower limit. Indeed, for any point $q$ in the support of the sequence $\overline{a}$, the distance $\delta_{q}(x_{0})$ is reached on a hyperplane of the form $H_{p,q}$ with $p\in\Z$, so that
\[
\delta_{q}(x_{0})=\frac{|q\cdot x_{0}-p|}{|q|}=\frac{|q'\cdot x_{0}-p'|}{|q'|}\geq\delta^{\Pcal}_{q'}(x_{0}).
\]
Here, $p'=p/r$ and $q'=q/r$, where $r$ denotes the greatest common divisor of the integer $p$ and the components of the vector $q$, so that $p'\in\prim{q'}$. Since the multiples of $q$ are also multiples of $q'$, the supremum over all integers $l\geq 1$ of $|a_{lq}|$ is at most that of $|a_{lq'}|$. As a consequence, for all $q\in\supp{\overline{a}}$ large enough, there exists a point $q'|q$ such that
\[
\frac{\log\overline{a}_{q'}}{\log\delta^{\Pcal}_{q'}(x_{0})}
\leq\frac{\log\overline{a}_{q}}{\log\delta_{q}(x_{0})}
\leq\frac{\log\overline{a}_{q}}{\log\delta^{\Pcal}_{q}(x_{0})},
\]
so the lower limit featuring $\delta^{\Pcal}_{q}(x_{0})$ in the denominator coincides with that featuring $\delta_{q}(x_{0})$. Furthermore, $|a_{q}|$ is obviously bounded above by $\overline{a}_{q}$, so that
\[
\frac{\log\overline{a}_{q}}{\log\delta_{q}(x_{0})}\leq\frac{\log|a_{q}|}{\log\delta_{q}(x_{0})}.
\]
The above discussion, combined with Corollary~\ref{cor:upbndjumpDav}, finally leads to~(\ref{eq:bndholdAa}). In particular, since $\delta_{n}(x_{0})$ is bounded above by $1/|n|$ regardless of the value of $x_{0}$, we deduce the next uniform bound on the H\"older exponent:
\[
h_{f}(x_{0})\leq\gamma_{a},
\]
where $\gamma_{a}$ is defined by~(\ref{eq:defgamma}). The remainder of the theorem then follows when the sequence of Davenport coefficients has slow decay, {\em i.e.}~when $\gamma_{a}$ vanishes.

In order to finish the proof of the theorem, we thus may assume that $\gamma_{a}$ is positive. It remains us to establish that
\begin{equation}\label{eq:geqloganlogdn}
h_{f}(x_{0})\geq\liminf_{n\to\infty \atop n\in\supp{a}}\frac{\log|a_{n}|}{\log\delta_{n}(x_{0})}
\end{equation}
for any point $x_{0}$ that does not belong to the set $\disc{M(a)}$. Let us consider an integer $j_{0}\geq 0$ and a point $x\in\R^{d}$ such that $2^{-(j_{0}+1)}\leq|x-x_{0}|<2^{-j_{0}}$. Since the sequence $a$ is in $\ell^{1}$, we may define
\[
\Sigma_{x_{0},x}(Z)=\sum_{n\in Z} a_{n}\left(\{n\cdot x\}-\{n\cdot x_{0}\}\right)
\]
for any subset $Z$ of $\Z^{d}$. Since the Davenport series converges normally, it is clear that its increment between $x_{0}$ and $x$ may be written in the form
\[
f(x)-f(x_{0})=\Sigma_{x_{0},x}(\Z^{d}).
\]
Therefore, it suffices to handle the series $\Sigma_{x_{0},x}(Z)$ for $Z$ ranging over a collection of sets that form a partition of $\Z^{d}$.

To be specific, let us begin by giving an upper bound on $|\Sigma_{x_{0},x}(\Ccal_{j}\cap\Z^{d})|$, where $\Ccal_{j}$ is the domain defined by
\[
\Ccal_{j}=\left\{x\in\R^{d} \:|\: 2^{j}\leq|x|<2^{j+1}\right\},
\]
for any integer $j\geq 0$. For every fixed $\gamma\in(0,\gamma_{a})$, there exists a constant $C_{\gamma}>0$ such that $|a_{n}|\leq C_{\gamma}|n|^{-\gamma}$ for all $n\in\Z^{d}$. For the sake of simplicity, we merely write $|a_{n}|\ll |n|^{-\gamma}$ in such a situation, thereby making use of the Vinogradov symbol. Accordingly,
\[
\left|\Sigma_{x_{0},x}(\Ccal_{j}\cap\Z^{d})\right|
\leq\sum_{n\in\Ccal_{j}\cap\Z^{d}} |a_{n}|
\ll\sum_{n\in\Ccal_{j}\cap\Z^{d}} |n|^{-\gamma}\ind_{\{n\in\supp{a}\}}.
\]
Furthermore, for every fixed $\eps>0$, we deduce from the sparsity assumption bearing on the sequence $a$ that $\card{(\supp{a}\cap\Ccal_{j})}\ll 2^{\eps j}$. As a consequence,
\[
\sum_{n\in\Ccal_{j}\cap\Z^{d}} |n|^{-\gamma}\ind_{\{n\in\supp{a}\}}\leq 2^{-\gamma j}\card{(\supp{a}\cap\Ccal_{j})}\ll 2^{(-\gamma+\eps)j}.
\]
The union over all integers $j\geq j_{0}$ is equal to the complement in $\R^{d}$ of the open ball centered at the origin with radius $2^{j_{0}}$. Thus, summing over all these values of $j$, we obtain
\[
\left|\Sigma_{x_{0},x}(\Z^{d}\setminus\opball{0}{2^{j_{0}}})\right|
\ll 2^{(-\gamma+\eps)j_{0}}
\ll |x-x_{0}|^{\gamma-\eps}.
\]

Now, let $\Ncal_{x_{0},x}$ denote the set of all points $n\in\Z^{d}$ for which there exists an integer $k\in\Z$ satisfying either $n\cdot x_{0}\leq k<n\cdot x$ or $n\cdot x\leq k<n\cdot x_{0}$, meaning that the hyperplane $H_{k,n}$ separates the points $x_{0}$ and $x$, and let $\Ncal^{\rm c}_{x_{0},x}$ denote its complement in $\Z^{d}$. For each $n\in\Ncal^{\rm c}_{x_{0},x}$, it is clear that $n\cdot x_{0}$ and $n\cdot x$ have the same integer part, so that
\begin{equation}\label{eq:SigmaNcalc}
\Sigma_{x_{0},x}(\Ncal^{\rm c}_{x_{0},x}\cap\opball{0}{2^{j_{0}}})=(x-x_{0})\cdot\sum_{n\in\Ncal^{\rm c}_{x_{0},x} \atop |n|<2^{j_{0}}} a_{n}n.
\end{equation}
If $\gamma_{a}$ is smaller than or equal to one, the modulus of the sum in~(\ref{eq:SigmaNcalc}) may be bounded above by the sum over $j\in\{0,\ldots,j_{0}-1\}$ of
\begin{equation}\label{eq:bndsumCj}
\begin{split}
\sum_{n\in\Ccal_{j}\cap\Z^{d}} |a_{n}n|
&\ll\sum_{n\in\Ccal_{j}\cap\Z^{d}} |n|^{1-\gamma}\ind_{\{n\in\supp{a}\}}\\
&\ll 2^{(1-\gamma)j}\card{(\supp{a}\cap\Ccal_{j})}\ll 2^{(1-\gamma+\eps)j},
\end{split}
\end{equation}
which entails that
\[
\left|\Sigma_{x_{0},x}(\Ncal^{\rm c}_{x_{0},x}\cap\opball{0}{2^{j_{0}}})\right|
\ll |x-x_{0}|\,2^{(1-\gamma+\eps)j_{0}}
\leq |x-x_{0}|^{\gamma-\eps}.
\]
In the opposite case where $\gamma_{a}$ is larger than one, we assume that $1<\gamma<\gamma_{a}$ and rewrite~(\ref{eq:SigmaNcalc}) as a difference of two terms in the following form:
\[
\Sigma_{x_{0},x}(\Ncal^{\rm c}_{x_{0},x}\cap\opball{0}{2^{j_{0}}})
=(x-x_{0})\cdot\sum_{n\in\Ncal^{\rm c}_{x_{0},x}} a_{n}n
-(x-x_{0})\cdot\sum_{n\in\Ncal^{\rm c}_{x_{0},x} \atop |n|\geq 2^{j_{0}}} a_{n}n.
\]
The first series is normally convergent because, in view of~(\ref{eq:bndsumCj}), we have
\[
\sum_{n\in\Ncal^{\rm c}_{x_{0},x}} |a_{n}n|
\leq\sum_{j=0}^{\infty}\sum_{n\in\Ccal_{j}\cap\Z^{d}} |a_{n}n|
\ll\sum_{j=0}^{\infty} 2^{(1-\gamma+\eps)j}<\infty.
\]
In order to handle the second term, we make use of~(\ref{eq:bndsumCj}) again; this implies that
\begin{equation}\label{eq:bndNcx0x}
\sum_{n\in\Ncal^{\rm c}_{x_{0},x} \atop |n|\geq 2^{j_{0}}} |a_{n}n|
\leq\sum_{j=j_{0}}^{\infty}\sum_{n\in\Ccal_{j}\cap\Z^{d}} |a_{n}n|
\ll\sum_{j=j_{0}}^{\infty} 2^{(1-\gamma+\eps)j}
\ll |x-x_{0}|^{-1+\gamma-\eps}.
\end{equation}
We finally deduce that
\[
\left|\Sigma_{x_{0},x}(\Ncal^{\rm c}_{x_{0},x}\cap\opball{0}{2^{j_{0}}})-(x-x_{0})\cdot\sum_{n\in\Ncal^{\rm c}_{x_{0},x}} a_{n}n\right|
\ll |x-x_{0}|^{\gamma-\eps}.
\]

It remains us to consider the behavior of $\Sigma_{x_{0},x}$ on the set $\Ncal_{x_{0},x}\cap\opball{0}{2^{j_{0}}}$. Let $\alpha$ denote the lower limit in the right-hand side of~(\ref{eq:geqloganlogdn}), which we may assume to be positive, and let $\alpha'\in(0,\alpha)$. By definition of $\alpha$, we have $\delta_{n}(x_{0})\geq|a_{n}|^{1/\alpha'}$ for $n\in\Z^{d}$ sufficiently far from the origin. In addition, $a_{n}$ necessarily vanishes when the distance $\delta_{n}(x_{0})$ is zero. As a matter of fact, in that situation, $x_{0}$ belongs to a hyperplane $H_{k,n}$ with $k\in\Z$. This hyperplane is represented by a unique pair $(p,q)\in\Hcal_{d}$, and then $p$ and $q$ divide $k$ and $n$, respectively. However, $x_{0}$ does not belong to $\disc{M(a)}$, so the index $q$ cannot belong to the support of the sequence $M(a)=\overline{a}$, which entails that $|a_{n}|\leq\overline{a}_{q}=0$. The upshot is that there exists a real $C_{\alpha'}>0$ such that $\delta_{n}(x_{0})\geq C_{\alpha'}|a_{n}|^{1/\alpha'}$ for all $n\in\Z^{d}$, which we write $|a_{n}|\ll\delta_{n}(x_{0})^{\alpha'}$ still using the Vinogradov symbol. As a consequence,
\[
\left|\Sigma_{x_{0},x}(\Ncal_{x_{0},x}\cap\opball{0}{2^{j_{0}}})\right|
\leq\sum_{n\in\Ncal_{x_{0},x} \atop |n|<2^{j_{0}}} |a_{n}|
\ll\sum_{n\in\Ncal_{x_{0},x} \atop |n|<2^{j_{0}}} \delta_{n}(x_{0})^{\alpha'}\ind_{\{n\in\supp{a}\}}.
\]
Moreover, when $n$ belongs to $\Ncal_{x_{0},x}$, we have $|n\cdot x_{0}-k|\leq|n\cdot(x-x_{0})|$ for some integer $k\in\Z$, so that $\delta_{n}(x_{0})$ is bounded above by $|x-x_{0}|$. Hence,
\[
\sum_{n\in\Ncal_{x_{0},x} \atop |n|<2^{j_{0}}} \delta_{n}(x_{0})^{\alpha'}\ind_{\{n\in\supp{a}\}}
\leq|x-x_{0}|^{\alpha'}\card{(\supp{a}\cap\opball{0}{2^{j_{0}}})}
\ll |x-x_{0}|^{\alpha'} 2^{\eps j_{0}},
\]
where the last bound follows from the sparsity assumption bearing on the sequence $a$. We deduce that
\[
\left|\Sigma_{x_{0},x}(\Ncal_{x_{0},x}\cap\opball{0}{2^{j_{0}}})\right|
\ll |x-x_{0}|^{\alpha'-\eps}.
\]
The above approach also enables us to write that
\[
\sum_{n\in\Ncal_{x_{0},x} \atop |n|<2^{j_{0}}} |a_{n}n|
\ll |x-x_{0}|^{\alpha'}\sum_{n\in\Ncal_{x_{0},x} \atop |n|<2^{j_{0}}} |n|\ind_{\{n\in\supp{a}\}}
\ll |x-x_{0}|^{\alpha'}\sum_{j=0}^{j_{0}-1} 2^{j}\card{(\supp{a}\cap\Ccal_{j})},
\]
where the last sum is bounded by $2^{(1+\eps)j_{0}}$ up to a constant, in view of the sparsity of the sequence $a$. In addition, when $1<\gamma<\gamma_{a}$, the bound given by~(\ref{eq:bndNcx0x}) still holds when $\Ncal^{\rm c}_{x_{0},x}$ is replaced by $\Ncal_{x_{0},x}$. It follows that
\[
\left|\sum_{n\in\Ncal_{x_{0},x}} a_{n}n\right|
\ll |x-x_{0}|^{-1+\gamma-\eps}+|x-x_{0}|^{-1+\alpha'-\eps},
\]
which readily implies that
\[
\left|\Sigma_{x_{0},x}(\Ncal_{x_{0},x}\cap\opball{0}{2^{j_{0}}})-(x-x_{0})\cdot\sum_{n\in\Ncal_{x_{0},x}} a_{n}n\right|
\ll |x-x_{0}|^{\gamma-\eps}+|x-x_{0}|^{\alpha'-\eps}.
\]

Combining all the previously obtained bounds, we finally get
\[
|f(x)-f(x_{0})|\ll |x-x_{0}|^{\gamma-\eps}+|x-x_{0}|^{\alpha'-\eps}
\]
when $\gamma_{a}$ is smaller than or equal to one, and
\[
\left|f(x)-f(x_{0})-(x-x_{0})\cdot\sum_{n\in\Z^{d}} a_{n}n\right|\ll |x-x_{0}|^{\gamma-\eps}+|x-x_{0}|^{\alpha'-\eps}
\]
when $\gamma_{a}$ is larger than one. In both cases, it appears that the H\"older exponent at $x_{0}$ of the Davenport series $f$ is at least the minimum between $\gamma-\eps$ and $\alpha'-\eps$. The bound~(\ref{eq:geqloganlogdn}) finally follows from letting $\eps$ go to zero, $\gamma$ to $\gamma_{a}$ and $\alpha'$ to $\alpha$.

\section{Proof of Theorems~\ref{thm:spec} and~\ref{thm:slisingsets}}\label{sec:proofthmmultifrac} 

Throughout the section, $f$ denotes a Davenport series with coefficients given by a sequence $a=(a_{n})_{n\in\ZD}$ in $\ell^{1}$. We assume that the series is sparse and not asymptotically jump canceling, and that $\gamma_{a}$ is both positive and finite.

\subsection{Locations of the singularities}

The first step to the proof of Theorems~\ref{thm:spec} and~\ref{thm:slisingsets} consists in observing that the iso-H\"older sets and the singularity sets of the Davenport series $f$ may be expressed in terms of the sets $L_{a}(\alpha)$ of all points that are at a distance less than $|a_{n}|^{1/\alpha}$ from a hyperplane $H_{k,n}$ defined as in~(\ref{eq:defHpq}) for infinitely many points $n$ in the support of the sequence $a$. Put another way, a point $x$ belongs to $L_{a}(\alpha)$ if and only if the distance $\delta_{n}(x)$ defined by~(\ref{eq:defdeltanx}) is less than $|a_{n}|^{1/\alpha}$ infinitely often. To be more specific, for any real $\alpha>0$, the set $L_{a}(\alpha)$ is defined by
\begin{equation}\label{eq:defLalpha}
L_{a}(\alpha)=\bigl\{x\in\R^{d}\:\bigl|\: |n\cdot x-k|<|n|\,|a_{n}|^{1/\alpha} \mbox{ for i.m.~} (k,n)\in\Z\times\Z^{d}\bigr\},
\end{equation}
where i.m.~stands for ``infinitely many''. It is easy and useful to remark that the mapping $\alpha\mapsto L_{a}(\alpha)$ is nondecreasing.

The connexion between the iso-H\"older and singularity sets, and the sets $L_{a}(\alpha)$ is now given by the next result. It is a direct consequence of Theorem~\ref{thm:formhold}, along with the discussion made in Section~\ref{sec:disc} above according to which the Davenport series $f$ is discontinuous on $\disc{J(a)}$, thus having H\"older exponent zero thereon. In its statement, $M(a)$ denotes the image of the sequence $a$ under the action of the maximal operator.

\begin{lem}\label{lem:EhLalpha}
Let $h\in[0,\gamma_{a}]$. Then,
\begin{equation}\label{eq:lem:EhLalpha1}
E_{f}(h)\setminus\disc{J(a)}\subseteq E'_{f}(h)\subseteq\left(\disc{M(a)}\setminus\disc{J(a)}\right)\cup\bigcap_{\alpha>h} L_{a}(\alpha).
\end{equation}
Moreover, $E_{f}(0)\supseteq\disc{J(a)}$ and for the positive values of $h$,
\begin{equation}\label{eq:lem:EhLalpha2}
E'_{f}(h)\supseteq L_{a}(h)\setminus\disc{J(a)} \qquad\mbox{and}\qquad E_{f}(h)\supseteq E'_{f}(h)\setminus\bigcup_{\alpha<h}L_{a}(\alpha).
\end{equation}
\end{lem}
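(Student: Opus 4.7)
The plan is to treat the lemma as a direct corollary of Theorem~\ref{thm:formhold}, via a single elementary translation that converts membership in $L_a(\alpha)$ into an inequality on the liminf $\liminf_{n\to\infty,\,n\in\supp{a}}\log|a_n|/\log\delta_n(x)$ that already appeared in Theorem~\ref{thm:formhold}. First I would record the following dictionary: since $\delta_n(x)$ and $|a_n|$ both lie in $(0,1)$ eventually along $n\in\supp{a}$ (using $a\in\ell^{1}$ and $\delta_n(x)\leq 1/|n|$), the inequality $|n\cdot x-k|<|n|\,|a_n|^{1/\alpha}$ with $a_n\ne 0$ is, after dividing by $|n|$ and taking logarithms, equivalent to $\log|a_n|/\log\delta_n(x)<\alpha$. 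Consequently, whenever $\delta_n(x)>0$ for all but finitely many $n\in\supp{a}$ (which is automatic as soon as $x\notin\disc{M(a)}$), one has
\[
x\in\bigcap_{\alpha>h}L_a(\alpha)\;\Longleftrightarrow\;\liminf_{n\to\infty,\,n\in\supp{a}}\frac{\log|a_n|}{\log\delta_n(x)}\leq h,
\]
and $x\notin\bigcup_{\alpha<h}L_a(\alpha)$ is likewise equivalent to the same liminf being $\geq h$.

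With this dictionary in hand, the inclusions of~(\ref{eq:lem:EhLalpha1}) are routine. The first, $E_f(h)\setminus\disc{J(a)}\subseteq E'_f(h)$, is immediate from the definitions, since $x\notin\disc{J(a)}$ makes $f$ continuous at $x$. For the second, take $x\in E'_f(h)$: if $x\in\disc{M(a)}$ then $x\in\disc{M(a)}\setminus\disc{J(a)}$ because $x\notin\disc{J(a)}$, while if $x\notin\disc{M(a)}$ Theorem~\ref{thm:formhold} identifies $h_f(x)$ with the liminf, so the dictionary gives $x\in\bigcap_{\alpha>h}L_a(\alpha)$. The containment $\disc{J(a)}\subseteq E_f(0)$ is handled separately: the analysis of Section~\ref{sec:disc} guarantees $\Delta_f(x)>0$ at any $x\in\disc{J(a)}$, and then $h_f(x)=0$ because local H\"older regularity of positive order would force continuity. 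For $L_a(h)\setminus\disc{J(a)}\subseteq E'_f(h)$ with $h>0$, I would invoke the \emph{general} upper bound in~(\ref{eq:bndholdAa})—which is valid at every point, without the $\disc{M(a)}$ restriction—together with the direction of the dictionary that needs only $x\in L_a(h)$, yielding both $h_f(x)\leq h$ and continuity at $x$.

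The main obstacle is the final inclusion $E_f(h)\supseteq E'_f(h)\setminus\bigcup_{\alpha<h}L_a(\alpha)$, which requires a \emph{lower} bound on $h_f(x)$. When $x\notin\disc{M(a)}$ the second equivalence of the dictionary combined with Theorem~\ref{thm:formhold} yields $h_f(x)=\liminf\geq h$, and together with $h_f(x)\leq h$ from $x\in E'_f(h)$ gives equality. The delicate case is $x\in\disc{M(a)}\setminus\disc{J(a)}$, where Theorem~\ref{thm:formhold} does not directly apply. Here I would use the hypothesis $\theta_a\leq 1$, which forces $\supp{M(a)}$ and $\supp{J(a)}$ to coincide up to a finite set, so that $\disc{M(a)}\setminus\disc{J(a)}$ is a \emph{finite} union of hyperplanes $H_{p,q}$ with $A_q=0$. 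On each such hyperplane the building block $f_{p,q}$ of the decomposition of Section~\ref{sec:disc} is continuous, and subtracting these finitely many pieces from $f$ yields a function to which the proof of Theorem~\ref{thm:formhold} may be rerun verbatim at $x$ (the only $n\in\supp{a}$ with $\delta_n(x)=0$ are those divisible by one of the finitely many exceptional $q$'s, and have been absorbed into the subtracted pieces), delivering the missing lower bound $h_f(x)\geq h$ and closing the proof.
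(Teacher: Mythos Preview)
Your approach matches the paper's: it too treats the lemma as an immediate consequence of Theorem~\ref{thm:formhold} and the description of discontinuities in Section~\ref{sec:disc}, giving no further details. Your dictionary between $L_a(\alpha)$ and the $\liminf$ is correct, and your handling of the inclusions in~(\ref{eq:lem:EhLalpha1}), of $\disc{J(a)}\subseteq E_f(0)$, of $L_a(h)\setminus\disc{J(a)}\subseteq E'_f(h)$, and of the non-delicate case of the last inclusion is fine.

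In the delicate case $x\in\disc{M(a)}\setminus\disc{J(a)}$ there are two inaccuracies. First, the set $\disc{M(a)}\setminus\disc{J(a)}$ is \emph{not} a finite union of hyperplanes: the finitely many directions $q\in\supp{\overline a}\setminus\supp{A}$ each carry countably many $H_{p,q}$. What you actually need is local to $x$ and follows directly from the hypothesis $x\notin\bigcup_{\alpha<h}L_a(\alpha)$ with $h>0$: if $\delta_n(x)=0$ then $\delta_n(x)<|a_n|^{1/\alpha}$ for every $\alpha>0$, so only finitely many $n\in\supp{a}$ can satisfy $\delta_n(x)=0$ (no appeal to $\theta_a$ is required here). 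Second, and more seriously, after subtracting the finitely many terms $g(x')=\sum_j a_{n_j}\{n_j\cdot x'\}$ and rerunning the proof of Theorem~\ref{thm:formhold} you obtain $h_{f-g}(x)\geq h$, not $h_f(x)\geq h$. To close the gap you must show $h_g(x)\geq h$, which is not automatic from ``$g$ is piecewise linear and continuous'' when $h>1$. The missing observation is that $g$ is in fact \emph{linear} near $x$: grouping the $n_j$'s by the hyperplane through $x$ they determine, each group consists of the multiples of a single $Q$ with $(p,Q)\in\Hcal_d$ and $x\in H_{p,Q}$, and the corresponding partial sum is a function of the single variable $u=Q\cdot x'-p$ whose jump at $u=0$ equals $-A_Q=0$ (because $x\notin\disc{J(a)}$) and whose slope is the same on both sides of $u=0$. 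Hence $h_g(x)=\infty$, and $h_f(x)\geq\min(h_{f-g}(x),h_g(x))\geq h$ as desired.
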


Lemma~\ref{lem:EhLalpha} suggests that the proof of Theorems~\ref{thm:spec} and~\ref{thm:slisingsets} will follow from a detailed understanding of the size and large intersection properties of the sets $L_{a}(\alpha)$. This is the purpose of the next subsection, but let us just point out here that
\begin{equation}\label{eq:LalphaRd}
\bigcap_{\alpha>\gamma_{a}} L_{a}(\alpha)=\R^{d}.
\end{equation}
Indeed, it is plain that $\delta_{n}(x)\leq 1/|n|$ for every $n\in\ZD$ and every point $x\in\R^{d}$, and that $|a_{n}|^{1/\alpha}|n|\geq 1$ infinitely often, when $\alpha>\gamma_{a}$. We may thus restrict our attention to the case where $\alpha\leq\gamma_{a}$ in the study of the size and large intersection properties of $L_{a}(\alpha)$.

\subsection{Size and large intersection properties of the sets $L_{a}(\alpha)$, connection with the Duffin-Schaeffer and Catlin conjectures}\label{subsec:sliLaalpha}

We shall investigate the size properties of the sets $L_{a}(\alpha)$ by estimating their Hausdorff measures for specific gauge functions. We call a gauge function any continuous nondecreasing function $g$ which is defined on $[0,\eps]$ for some $\eps>0$ and vanishes at zero. The Hausdorff measure associated with such a gauge function is then defined by
\[
\hau^{g}(E)=\lim_{\delta\downarrow 0}\uparrow \hau^{g}_{\delta}(E) \qquad\mbox{with}\qquad \hau^{g}_{\delta}(E)=\inf_{E\subseteq\bigcup_{i} U_{i}\atop\diam{U_{i}}<\delta} \sum_{i=1}^{\infty} g(\diam{U_{i}}),
\]
for any subset $E$ of $\R^{d}$. Here, the infimum is taken over all sequences $(U_{i})_{i\geq 1}$ of subsets of $\R^{d}$ satisfying $E\subseteq\bigcup_{i}U_{i}$ and $\diam{U_{i}}<\delta$ for all $i$, were $\diam{\,\cdot\,}$ denotes diameter. It is well-known that $\hau^{g}$ is a Borel measure on $\R^{d}$, see {\em e.g.}~\cite{Rogers:1970wb}. Moreover, the Hausdorff measure associated with the gauge function $r\mapsto r^{s}$ is called the $s$-dimensional Hausdorff measure and is denoted by $\hau^{s}$; recall that such measures enable one to define the Hausdorff dimension of a nonempty set $E\subseteq\R^{d}$ by
\[
\Hdim E=\sup\{ s\in(0,d) \:|\: \hau^{s}(E)=\infty \}=\inf\{ s\in(0,d) \:|\: \hau^{s}(E)=0 \},
\]
see Falconer's book~\cite{Falconer:2003oj} for instance.

Theorems~\ref{thm:spec} and~\ref{thm:slisingsets} state that the iso-H\"older and the singularity sets of the Davenport series $f$ all have Hausdorff dimension between $d-1$ and $d$. Therefore, on our way to the proof of these results, we may restrict our attention to the gauge functions of the form $r\mapsto r^{d-1+s}$, with $0\leq s\leq 1$, as well as slight corrections thereof. These corrections are obtained by replacing $r^{s}$ in the previous expression by more general functions, specifically, the continuous nondecreasing functions $\ph$ defined on $[0,\eps]$ for some $\eps>0$ which vanish at zero, for which $r\mapsto\ph(r)/r$ is nonincreasing and positive, and for which the limit
\[
s_{\ph}=\lim_{r\to 0}\frac{\log\ph(r)}{\log r}
\]
exists (this limit is then between zero and one). The collection of all such functions is denoted by $\Phi$, and clearly contains the functions $r\mapsto r^{s}$, for $0\leq s\leq 1$. For any $\ph\in\Phi$, it is now plain that the function $r\mapsto r^{d-1}\ph(r)$ is a gauge; the corresponding Hausdorff measure is denoted by $\hau^{d-1,\ph}$, and the value that it assigns to the set $L_{a}(\alpha)$ is discussed in the next statement.

\begin{lem}\label{lem:CShauzero}
For any real $\alpha>0$ and any function $\ph\in\Phi$,
\[
\sum_{n\in\Z^{d}} |n|\ph(|a_{n}|^{1/\alpha})<\infty \qquad\Longrightarrow\qquad \hau^{d-1,\ph}(L_{a}(\alpha))=0.
\]
\end{lem}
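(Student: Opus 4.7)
The set $L_{a}(\alpha)$ is a classical limsup set: it consists of the points $x$ that lie in infinitely many of the slabs $S_{k,n}=\{x\in\R^{d}\:|\:\dist(x,H_{k,n})<\rho_{n}\}$, where $\rho_{n}=|a_{n}|^{1/\alpha}$ and the pair $(k,n)$ runs through $\Z\times\ZD$. My plan is a Hausdorff-measure Borel--Cantelli argument: for each $N\geq 1$, produce a covering of $L_{a}(\alpha)$ by the slabs with $|n|\geq N$, compute the corresponding $\hau^{d-1,\ph}$-weight, and let $N\to\infty$.

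First, observe that $L_{a}(\alpha)$ is $\Z^{d}$-periodic, since translating $x$ by a lattice vector merely shifts the integer $k$. It therefore suffices to prove $\hau^{d-1,\ph}(L_{a}(\alpha)\cap Q)=0$ for $Q=[0,1]^{d}$, and for this we use the covering $L_{a}(\alpha)\cap Q\subseteq\bigcup_{|n|\geq N}\bigcup_{k\in\Z}(S_{k,n}\cap Q)$ for arbitrary $N$. Two elementary geometric counts enter the estimate. On the one hand, since $n\cdot x$ spans an interval of length $O(|n|)$ as $x$ varies over $Q$, the number of integers $k$ for which $S_{k,n}$ meets $Q$ is $O(|n|)$. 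On the other hand, $S_{k,n}\cap Q$ is a slab of thickness $2\rho_{n}$ around a piece of hyperplane of bounded $(d-1)$-dimensional diameter, so it can be covered by $O(\rho_{n}^{-(d-1)})$ Euclidean balls of diameter $2\rho_{n}$. Using the fact that $r\mapsto\ph(r)/r$ is nonincreasing, whence $\ph(2r)\leq 2\ph(r)$, the resulting $\hau^{d-1,\ph}$-weight of this cover is bounded by
\[
\sum_{|n|\geq N}O(|n|)\cdot O(\rho_{n}^{-(d-1)})\cdot (2\rho_{n})^{d-1}\ph(2\rho_{n})\;\ll\;\sum_{|n|\geq N}|n|\,\ph(\rho_{n}),
\]
which is the tail of the series assumed to converge and thus tends to $0$ as $N\to\infty$.

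To pass from this to the desired vanishing of $\hau^{d-1,\ph}(L_{a}(\alpha)\cap Q)$, I still need to check that the diameters in the covering shrink uniformly as $N$ grows. This is immediate from the hypothesis, since convergence of $\sum_{n}|n|\ph(\rho_{n})$ forces $\ph(\rho_{n})\to 0$, and hence $\rho_{n}\to 0$, because $\ph$ is continuous, nondecreasing and strictly positive on $(0,\eps]$. The only subtlety I foresee is the geometric covering step: the precise match between the hypothesis $\sum|n|\ph(\rho_{n})<\infty$ and the measure $\hau^{d-1,\ph}$ relies on the fact that each slab is essentially $(d-1)$-dimensional, so one must cover it by $O(\rho_{n}^{-(d-1)})$ balls of radius $\rho_{n}$ (rather than the wasteful $O(\rho_{n}^{-d})$); the factor $|n|$ appearing in the hypothesis is then exactly what is needed to absorb the number of parallel hyperplanes $H_{k,n}$ that intersect $Q$.
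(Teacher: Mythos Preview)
Your proof is correct and follows essentially the same Borel--Cantelli covering argument as the paper: cover the slabs $S_{k,n}\cap Q$ by $O(\rho_n^{-(d-1)})$ balls of diameter $O(\rho_n)$, count $O(|n|)$ relevant values of $k$, and sum. The only cosmetic difference is that you exploit the $\Z^d$-periodicity of $L_a(\alpha)$ to localize to the unit cube, whereas the paper works on balls $\opball{0}{A-1}$ with $A\to\infty$; your route is slightly cleaner.
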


\begin{proof}
Let $\rho_{n}=|a_{n}|^{1/\alpha}$ for any $n\in\Z^{d}$, and let us consider two real numbers $A>1$ and $\delta\in(0,1]$. Let us assume that the series appearing in the statement of the lemma converges. So, there necessarily exists an integer $\eta_{0}\geq 1$ such that $4\rho_{n}<\delta$ for all $n\in\Z^{d}$ with $|n|\geq\eta_{0}$. Then, for any $\eta_{1}\geq\eta_{0}$,
\[
L_{a}(\alpha)\cap\opball{0}{A-1}\subseteq\bigcup_{n\in\supp{a} \atop |n|\geq\eta_{1}}\bigcup_{k\in\Z\atop |k|<A|n|}\left\{ x\in\opball{0}{A} \:\bigl|\: \dist(x,H_{k,n})<\rho_{n} \right\}.
\]
Moreover, each set in the union above may be covered by $(2\lfloor 2A\sqrt{d}/\rho_{n}\rfloor)^{d-1}$ open balls with radius $2\rho_{n}$. Therefore,
\begin{eqnarray*}
\hau^{r\mapsto r^{d-1}\ph(r)}_{\delta}(L_{a}(\alpha)\cap\opball{0}{A-1})
&\leq& \sum_{n\in\supp{a} \atop |n|\geq\eta_{1}} 2A|n|\left(\frac{4A\sqrt{d}}{\rho_{n}}\right)^{d-1} (4\rho_{n})^{d-1} \ph(4\rho_{n}) \\
&\leq& 8A(16A\sqrt{d})^{d-1} \sum_{n\in\Z^{d} \atop |n|\geq\eta_{1}} |n|\ph(|a_{n}|^{1/\alpha}).
\end{eqnarray*}
Letting $\eta_{1}\to\infty$ and $\delta\to 0$, we deduce that $\hau^{d-1,\ph}(L_{a}(\alpha)\cap\opball{0}{A-1})=0$. This holds for all integers $A\geq 1$, so the result follows.
\end{proof}

In particular, letting $\ph$ be the identity function in the statement of Lemma~\ref{lem:CShauzero} and letting $\leb^{d}$ be the Lebesgue measure in $\R^{d}$, we deduce that
\begin{equation}\label{eq:LalphaLebzero}
\sum_{n\in\Z^{d}} |n|\,|a_{n}|^{1/\alpha}<\infty \qquad\Longrightarrow\qquad \leb^{d}(L_{a}(\alpha))=0.
\end{equation}
Owing to the alternate expression~(\ref{eq:defgammabis}) of $\gamma_{a}$, it is easy to see that the above series converges once $\alpha$ is less than $\gamma_{a}$. Owing to Lemma~\ref{lem:EhLalpha} and the fact that $\disc{M(a)}$ is a countable union of hyperplanes, we deduce that the iso-H\"older sets $E_{f}(h)$ and the singularity sets $E'_{f}(h)$ have Lebesgue measure zero when $h<\gamma_{a}$. Together with Corollary~\ref{cor:bndgamma}, this implies that the sets $E_{f}(\gamma_{a})$ and $E'_{f}(\gamma_{a})$ both have full Lebesgue measure. Therefore, $h_{f}(x_{0})=\gamma_{a}$ for $\leb^{d}$-almost every $x_{0}\in\R^{d}$.

To proceed with the proof of Theorems~\ref{thm:spec} and~\ref{thm:slisingsets}, we shall need a kind of converse to Lemma~\ref{lem:CShauzero}, which ensures that $L_{a}(\alpha)$ has a positive Hausdorff measure for specific gauge functions. The study of the size properties of various classical sets arising in the metric theory of Diophantine approximation suggests that such a converse should look like the following:
\begin{equation}\label{eq:conjdivLalpha}
\sum_{n\in\Z^{d}} |n|\ph(|a_{n}|^{1/\alpha})=\infty \ \Longrightarrow\  \forall W\mbox{ open}\quad \hau^{d-1,\ph}(L_{a}(\alpha)\cap W)=\hau^{d-1,\ph}(W),
\end{equation}
see for instance~\cite{Beresnevich:2009vn,Durand:2007uq} and references therein. Given that $L_{a}(\alpha)$ is of the form
\[
K_{d}(\psi)=\bigl\{x\in\R^{d}\:\bigl|\: |n\cdot x-k|<\psi(n) \mbox{ for i.m.~} (k,n)\in\Z\times\Z^{d}\bigr\},
\]
where $\psi:\Z^{d}\to [0,\infty)$ is a multivariate approximating function, and thanks to the mass transference principle of~\cite{Beresnevich:2005vn} and the slicing technique of~\cite{Beresnevich:2006lr}, this would follow from the next general statement: The set $K_{d}(\psi)$ has full Lebesgue measure in $\R^{d}$ if the series $\sum_{n}\psi(n)$ diverges. However, such a statement is known to be false and we refer to Section~5 in~\cite{Beresnevich:2009vn} for a counterexample. The expected result is actually given by a generalization of the Catlin conjecture to the case of dual approximation, which has been formulated by V.~Beresnevich, V.~Bernik, M.~Dodson and S.~Velani~\cite{Beresnevich:2009vn}, and consists in replacing the previous series by
\[
\sum_{q\in\ZD} \phi_{d}(q)\max_{t\geq 1}\frac{\psi(t q)}{t|q|_{\infty}},
\]
where $|\cdot|_{\infty}$ denotes the supremum norm and $\phi_{d}(q)$ is the number of positive integers less than or equal to $|q|_{\infty}$ which are coprime with the components of $q$. We refer to~\cite{Beresnevich:2009vn} for a motivation of this conjecture, and for its relationship with the dual form of the famous Duffin-Schaeffer conjecture. The upshot is that it seems rather difficult to provide a converse to Lemma~\ref{lem:CShauzero} in the form~(\ref{eq:conjdivLalpha}).

As shown by the statement of Theorems~\ref{thm:spec} and~\ref{thm:slisingsets}, we ultimately describe the size of the iso-H\"older and singularity sets in terms of Hausdorff dimension, rather than using general Hausdorff measures. Thus, we do not need to call upon such precise results as those mentioned just above. In fact, we only need to prove that appropriate modifications of the set
\[
L_{a}(\gamma_{a})=\bigl\{x\in\R^{d}\:\bigl|\: |n\cdot x-k|<|n|\,|a_{n}|^{1/\gamma_{a}} \mbox{ for i.m.~} (k,n)\in\Z\times\Z^{d}\bigr\},
\]
obtained by letting $\alpha=\gamma_{a}$ in~(\ref{eq:defLalpha}), have full Lebesgue measure in $\R^{d}$. Note that the set $L_{a}(\alpha)$ has full Lebesgue measure in $\R^{d}$ when $\alpha>\gamma_{a}$ due to~(\ref{eq:LalphaRd}), and Lebesgue measure zero when $\alpha<\gamma_{a}$ by virtue of Lemma~\ref{lem:CShauzero}. The study of its Lebesgue measure for the critical value $\alpha=\gamma_{a}$ is more delicate. Indeed, if true,~(\ref{eq:conjdivLalpha}) would imply that $L_{a}(\gamma_{a})$ has full Lebesgue measure when the series $\sum_{n} |n|\,|a_{n}|^{1/\gamma_{a}}$ diverges. However, the coefficients of the Davenport $f$ series may be chosen in such a way that the series converges, {\em e.g.}~when the nonvanishing coefficients are given by $|a_{\lambda_{m}}|=(m^{2}|\lambda_{m}|)^{-\gamma}$ for some positive real $\gamma$ and some sparse injective sequence $(\lambda_{m})_{m\geq 1}$, in which case $L_{a}(\gamma_{a})$ has Lebesgue measure zero, by Lemma~\ref{lem:CShauzero}. This means that we shall have to slightly reshape this set in order to ensure that we work with a set with full Lebesgue measure. Actually, as shown by the next lemma, a slight modification of $L_{a}(\gamma_{a})$ enables one to recover the whole space. This modification is written in the form
\[
L_{a}^{(\ph,i)}(\gamma_{a})=\bigl\{x\in\R^{d}\:\bigl|\: |n\cdot x-k|<|n|\,\ph(|a_{n}|^{1/\gamma_{a}}) \mbox{ for i.m.~} (k,n)\in\Z\times\Ncal_{i}\bigr\},
\]
where $\ph$ and $i$ are appropriately chosen in $\Phi$ and $\{1,\ldots,d\}$ respectively. Here, $\Ncal_{i}$ denotes the set of all points $n=(n_{1},\ldots,n_{d})$ in $\Z^{d}$ such that $|n|_{\infty}=|n_{i}|$; note that the sets $\Ncal_{i}$ obviously form a covering of $\Z^{d}$. Whereas the function $\ph$ is crucial in order to enlarge the set $L_{a}(\gamma_{a})$ and then to recover the whole space, the index $i$, which is used to retain only some specific frequencies among the support of $a$, is introduced merely for technical reasons appearing in the proof of Lemma~\ref{lem:lipLalpha} below. In the following, $\Phi_{\star}$ denotes the collection of all functions $\ph\in\Phi$ with $s_{\ph}=1$ for which at least one of the sets $L_{a}^{(\ph,1)}(\gamma_{a}),\ldots,L_{a}^{(\ph,d)}(\gamma_{a})$ has full Lebesgue measure in $\R^{d}$. The next lemma shows that $\Phi_{\star}$ is nonempty.

\begin{lem}\label{lem:existphistar}
There exist an index $i_{\star}$ and a function $\ph_{\star}$ with $s_{\ph_{\star}}=1$ such that
\[
|n|\,\ph_{\star}(|a_{n}|^{1/\gamma_{a}})\geq 1 \qquad \mbox{for i.m.} \quad n\in\Ncal_{i_{\star}}.
\]
In particular, the set $L_{a}^{(\ph_{\star},i_{\star})}(\gamma_{a})$ is equal to the whole space $\R^{d}$, and $\ph_{\star}$ is in $\Phi_{\star}$.
\end{lem}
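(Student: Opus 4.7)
The plan is to construct $\ph_\star$ by piecewise-linear interpolation along a subsequence of frequencies that realizes the liminf in the definition of $\gamma_a$. By~(\ref{eq:defgamma}), there is a sequence $(n_k)_{k\geq 1}\subseteq\supp{a}$ with $|n_k|\to\infty$ and $\eta_k:=\frac{-\log|a_{n_k}|}{\gamma_a\log|n_k|}-1\to 0$, so that $r_k:=|a_{n_k}|^{1/\gamma_a}=|n_k|^{-(1+\eta_k)}$. Since $\Z^d=\Ncal_1\cup\cdots\cup\Ncal_d$, the pigeonhole principle produces an index $i_\star$ and an infinite sub-subsequence with $n_k\in\Ncal_{i_\star}$; further extraction ensures that $|n_k|$ is strictly increasing and $r_k\to 0$ strictly monotonically, and I shall write $\alpha_k:=1/|n_k|$.

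The case in which $\eta_k\leq 0$ infinitely often is trivial: $|n_k|r_k\geq 1$ then holds along this subsequence, so $\ph_\star(r)=r$ itself lies in $\Phi$ with $s_{\ph_\star}=1$ and does the job. Otherwise $\eta_k>0$ eventually, so $c_k:=\alpha_k/r_k=|n_k|^{\eta_k}>1$; set $M_k:=\max_{j\leq k}c_j$ (nondecreasing in $k$) and define $\ph_\star$ to be piecewise linear on the nodes $r_k$ by $\ph_\star(r_k)=r_k M_k$, extended continuously with $\ph_\star(0)=0$ and by $\ph_\star(r)=r M_1$ for $r\geq r_1$. Since $r_k M_k\geq r_k c_k=\alpha_k$, we have $|n_k|\ph_\star(r_k)\geq 1$ for every $k$.

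The routine verifications are that $\ph_\star$ lies in $\Phi$. Monotonicity of $\ph_\star$ reduces to $r_{k+1}M_{k+1}\leq r_k M_k$: when $M_{k+1}=M_k$ this is clear, and when $M_{k+1}=c_{k+1}$ one uses $r_{k+1}M_{k+1}=\alpha_{k+1}<\alpha_k=r_k c_k\leq r_k M_k$. That $\ph_\star(r)/r$ is nonincreasing follows because at the nodes this ratio equals $M_k$, which is nondecreasing in $k$ while $r_k$ decreases, and a direct computation on each linear piece confirms the monotonicity there; continuity and positivity are immediate from the piecewise-linear form and $M_k\geq 1$.

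The main obstacle is the condition $s_{\ph_\star}=1$, equivalently $\log M_k/\log|n_k|\to 0$. This is where $\eta_j\to 0$ is crucial: given $\epsilon>0$, choose $K$ with $\eta_j<\epsilon$ for $j>K$; then for $k>K$ one has $\log M_k\leq\max(\log M_K,\,\epsilon\log|n_k|)$, which yields $\limsup\log M_k/\log|n_k|\leq\epsilon$, and $\epsilon$ is arbitrary. Finally, to conclude $L_a^{(\ph_\star,i_\star)}(\gamma_a)=\R^d$, fix $x\in\R^d$; for each $k$ let $p$ be the nearest integer to $n_k\cdot x$, so $|n_k\cdot x-p|\leq 1/2<1\leq|n_k|\ph_\star(r_k)$, and $x$ therefore belongs to the set. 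In particular this set has full Lebesgue measure, so $\ph_\star\in\Phi_\star$.
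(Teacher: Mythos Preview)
Your proof is correct and follows the same overall strategy as the paper: extract a subsequence realizing the liminf in~(\ref{eq:defgamma}), pigeonhole into some $\Ncal_{i_\star}$, and build $\ph_\star$ by interpolation through the nodes $(r_k,1/|n_k|)$. The differences are purely technical. The paper extracts a subsequence along which $u_{m_k}/\rho_{m_k}$ diverges \emph{monotonically} and then uses \emph{logarithmic} interpolation (i.e.\ $\log\ph_\star$ piecewise affine in $\log r$), whereas you absorb monotonicity into the running maxima $M_k$ and interpolate \emph{linearly} in $r$. Both choices work; the paper's logarithmic interpolation has the advantage that $\log\ph_\star(r)/\log r$ is monotone on each piece, so checking $s_{\ph_\star}=1$ at the nodes is literally sufficient.

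The only point in your write-up that deserves a word more is the clause ``$s_{\ph_\star}=1$, equivalently $\log M_k/\log|n_k|\to 0$''. For a piecewise \emph{linear} $\ph_\star$ the ratio $\log\ph_\star(r)/\log r$ is \emph{not} monotone between nodes, so convergence at the nodes does not trivially imply convergence as $r\to 0$. The claim is nonetheless true: on $[r_{k+1},r_k]$ one has $\ph_\star(r)=Ar+B$ with $0\leq A\leq M_k$ and $0\leq B\leq r_{k+1}M_{k+1}$, hence $\ph_\star(r)\leq M_k r+r_{k+1}M_{k+1}$. Splitting according to which summand dominates gives, in one case, $\log\ph_\star(r)/\log r\geq 1-\log(2M_k)/t_k$, and in the other, $\log\ph_\star(r)/\log r\geq 1-(\log 2+\log M_{k+1})/t_{k+1}$; both tend to $1$ since $\log M_k/t_k\to 0$. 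Combined with the easy upper bound $\log\ph_\star(r)/\log r\leq 1$ (from $\ph_\star(r)\geq M_k r\geq r$), this closes the gap.
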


\begin{proof}
Let $(\lambda_{m})_{m\geq 1}$ denote an enumeration of the support of the sequence $a$. Given that the Davenport series is sparse, the sequence $(\lambda_{m})_{m\geq 1}$ is sparse and injective and, up to rearranging its terms, we may assume that the sequence $(|\lambda_{m}|)_{m\geq 1}$ is nondecreasing. Then, let $\rho_{m}=|a_{\lambda_{m}}|^{1/\gamma_{a}}$ and $u_{m}=1/|\lambda_{m}|$ for any integer $m\geq 1$. The case in which the sequence $(u_{m}/\rho_{m})_{m\geq 1}$ does not diverge to infinity is elementary. Indeed, in that situation, there exists a constant $C>0$ such that $u_{m}\leq C\rho_{m}$ for all $m$ belonging to some infinite subset $\Mcal$ of $\N$. Then, the function defined by $\ph_{\star}(r)=Cr$ clearly satisfies the required properties, and it suffices to choose $i_{\star}$ in such a way that $\Ncal_{i_{\star}}$ contains infinitely many points $\lambda_{m}$ with $m\in\Mcal$.

We may therefore suppose from now on that $(u_{m}/\rho_{m})_{m\geq 1}$ diverges to infinity. The definition~(\ref{eq:defgamma}) of $\gamma_{a}$, combined with the observation that $\Z^{d}$ is covered by the sets $\Ncal_{i}$, ensures the existence of an index $i_{\star}$ such that
\[
\limsup_{m\to\infty \atop \lambda_{m}\in\Ncal_{i_{\star}}}\frac{\log u_{m}}{\log\rho_{m}}=1.
\]
In addition, both $u_{m}$ and $\rho_{m}$ tend to zero as $m\to\infty$. Thus, we may find a sequence of indices $(m_{k})_{k\geq 1}$ in $\N$ along which all the following properties hold: The sequence $(u_{m_{k}}/\rho_{m_{k}})_{k\geq 1}$ diverges to infinity monotonically; $\log u_{m_{k}}/\log\rho_{m_{k}}$ tends to one as $k$ goes to infinity; for all $k\geq 1$,
\[
\left\{\begin{array}{l}
\lambda_{m}\in\Ncal_{i_{\star}} \\
\rho_{m_{k+1}}\leq (\rho_{m_{k}})^{k} \\
u_{m_{k+1}}\leq (u_{m_{k}})^{k}.
\end{array}\right.
\]
It is now straightforward to check that any logarithmic interpolation of the points $(\rho_{m_{k}},u_{m_{k}})$ yields a suitable function $\ph_{\star}$. To be specific, any function $\ph_{\star}$ defined on $[0,\infty)$ for which
\[
\log\ph_{\star}(r)=\log u_{m_{k}}-\frac{\log\rho_{m_{k}}-\log r}{\log\rho_{m_{k}}-\log\rho_{m_{k+1}}}(\log u_{m_{k}}-\log u_{m_{k+1}}),
\]
for all $r\in(\rho_{m_{k+1}},\rho_{m_{k}}]$ and $k\geq 1$, clearly belongs to the set $\Phi$ and meets all our requirements.
\end{proof}

For any $\alpha\in(0,\gamma_{a}]$, we now define a mapping $T_{\alpha}$ on the set $\Phi_{\star}$ by letting
\[
T_{\alpha}\ph:r\mapsto\ph(r^{\alpha/\gamma_{a}})
\]
for any function $\ph\in\Phi_{\star}$. All the functions $T_{\alpha}\ph$ belong to $\Phi$ and satisfy $s_{T_{\alpha}\ph}=\alpha/\gamma_{a}$, so they roughly behave like $r^{\alpha/\gamma_{a}}$ near the origin. Furthermore, $T_{\gamma_{a}}$ is the identity mapping. Now, recall that Lemma~\ref{lem:existphistar} yields a function $\ph_{\star}$ for which $|n|\,\ph_{\star}(|a_{n}|^{1/\gamma_{a}})\geq 1$ infinitely often; the series $\sum_{n} |n|T_{\alpha}\ph_{\star}(|a_{n}|^{1/\alpha})$ thus diverges. We are in a situation where the assumption of Lemma~\ref{lem:CShauzero} fails and, in fact, the set $L_{a}(\alpha)$ does not necessarily have a vanishing $\hau^{d-1,T_{\alpha}\ph_{\star}}$-mass. On the contrary, it should be regarded as large and omnipresent in $\R^{d}$ in terms of $(d-1+\alpha/\gamma_{a})$-dimensional Hausdorff measure, in the sense that it belongs to Falconer's class $\licfalc{d-1+\alpha/\gamma_{a}}$ of sets with large intersection defined above.

This follows from our next result, namely, Lemma~\ref{lem:lipLalpha}, which describes the large intersection properties of the sets $L_{a}(\alpha)$. The properties are expressed by means of the classes $\lic{g}{W}$ that were introduced in~\cite{Durand:2007uq} in order to extend Falconer's classes to general gauge functions $g$ and open sets $W\subseteq\R^{d}$, with a view to establishing a suitable framework to describe precisely the large intersection properties of various sets arising in the metric theory of Diophantine approximation. In what follows, we shall restrict our attention to the classes $\lic{d-1,\ph}{\R^{d}}$ of sets with large intersection in the whole space $\R^{d}$ with respect to gauge functions of the form $r\mapsto r^{d-1}\ph(r)$ with $\ph\in\Phi$. We refer to~\cite{Durand:2007uq} for a precise definition of those classes and a description of their main properties, and we content ourselves here with recalling that the class $\lic{d-1,\ph}{\R^{d}}$ is closed under countable intersections and bi-Lipschitz mappings, and is formed of $G_{\delta}$-subsets $E$ of $\R^{d}$ satisfying
\begin{equation}\label{eq:sliinfhau}
\forall W\neq\emptyset\mbox{ open} \qquad \hau^{d-1,\psi}(E\cap W)=\infty
\end{equation}
for any function $\psi\in\Phi$ growing faster than $\ph$ at zero, in the sense that $\psi/\ph$ tends to infinity monotonically, in which case we write $\psi\prec\ph$. A straightforward consequence of these properties is the fact that, when $d-1+s_{\ph}$ is positive, the class $\lic{d-1,\ph}{\R^{d}}$ is included in Falconer's class $\licfalc{d-1+s_{\ph}}$. Let us now describe the large intersection properties of the sets $L_{a}(\alpha)$; in the next statement, $\Phi_{\alpha}$ denotes the collection of functions $\ph\in\Phi$ satisfying $\ph\prec T_{\alpha}\ph_{\star}$ for some $\ph_{\star}\in\Phi_{\star}$.

\begin{lem}\label{lem:lipLalpha}
For any real $\alpha\in(0,\gamma_{a}]$,
\[
L_{a}(\alpha)\in\bigcap_{\ph\in\Phi_{\alpha}}\lic{d-1,\ph}{\R^{d}}\subseteq\licfalc{d-1+\alpha/\gamma_{a}}.
\]
\end{lem}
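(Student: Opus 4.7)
The plan is to combine Lemma~\ref{lem:existphistar} with a ubiquity/mass transference argument for neighborhoods of hyperplanes. The second inclusion is essentially formal: for any $\ph_{\star}\in\Phi_{\star}$ the function $T_{\alpha}\ph_{\star}(r)=\ph_{\star}(r^{\alpha/\gamma_{a}})$ has exponent $s_{T_{\alpha}\ph_{\star}}=\alpha/\gamma_{a}$, and the choice $\ph(r)=|\log r|\cdot T_{\alpha}\ph_{\star}(r)$ produces an element of $\Phi_{\alpha}$ with $s_{\ph}=\alpha/\gamma_{a}$ (monotonicity of $r\mapsto\ph(r)/r$, positivity, and the computation of $s_{\ph}$ are all immediate). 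For this specific $\ph$, the embedding $\lic{d-1,\ph}{\R^{d}}\subseteq\licfalc{d-1+s_{\ph}}$ recalled just before the statement of the lemma immediately gives the second inclusion.

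For the first inclusion, I would fix $\ph\in\Phi_{\alpha}$ and, by the very definition of $\Phi_{\alpha}$, pick $\ph_{\star}\in\Phi_{\star}$ with $\ph\prec T_{\alpha}\ph_{\star}$; by the definition of $\Phi_{\star}$, there is an index $i_{\star}\in\{1,\ldots,d\}$ for which $L_{a}^{(\ph_{\star},i_{\star})}(\gamma_{a})$ has full Lebesgue measure in $\R^{d}$. The key algebraic identity
\[
T_{\alpha}\ph_{\star}\bigl(|a_{n}|^{1/\alpha}\bigr)=\ph_{\star}\bigl(|a_{n}|^{1/\gamma_{a}}\bigr),
\]
which is merely the definition of $T_{\alpha}$, yields $L_{a}^{(T_{\alpha}\ph_{\star},i_{\star})}(\alpha)=L_{a}^{(\ph_{\star},i_{\star})}(\gamma_{a})$. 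Thus the \emph{enlarged} $\limsup$ set, obtained by thickening the slabs about the hyperplanes $H_{k,n}$ (indexed by $(k,n)\in\Z\times\Ncal_{i_{\star}}$) to the width $T_{\alpha}\ph_{\star}(|a_{n}|^{1/\alpha})$, already has full Lebesgue measure, which places us in the regime where a mass transference principle should apply.

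The heart of the proof is then to apply the ubiquity framework of~\cite{Durand:2007uq} for $\limsup$ sets of hyperplane neighborhoods. Full Lebesgue measure of the enlarged set, together with the $\Z^{d}$-periodicity of the configuration, ensures that the family $(H_{k,n},T_{\alpha}\ph_{\star}(|a_{n}|^{1/\alpha}))$ is $T_{\alpha}\ph_{\star}$-ubiquitous in every open subset of $\R^{d}$. The corresponding ubiquity theorem for hyperplanes then produces, for every gauge $\ph\in\Phi$ with $\ph\prec T_{\alpha}\ph_{\star}$, the conclusion
\[
\bigl\{x\in\R^{d}\:\bigl|\:|n\cdot x-k|<|n|\,|a_{n}|^{1/\alpha}\mbox{ for i.m.\ }(k,n)\in\Z\times\Ncal_{i_{\star}}\bigr\}\in\lic{d-1,\ph}{\R^{d}},
\]
where the codimension $d-1$ in the target class matches the geometry of the hyperplane slabs, and the ratio $\ph/T_{\alpha}\ph_{\star}$ encodes the refinement gauge appearing in the definition of the large intersection class.

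To conclude, the displayed set is a $G_{\delta}$-subset of $L_{a}(\alpha)$ (which itself is $G_{\delta}$), and the Hausdorff measure condition~(\ref{eq:sliinfhau}) defining $\lic{d-1,\ph}{\R^{d}}$ is monotone and hence inherited by $G_{\delta}$-supersets. This upgrades the conclusion to $L_{a}(\alpha)\in\lic{d-1,\ph}{\R^{d}}$, giving the first inclusion. The main obstacle I anticipate is the precise invocation of the ubiquity/mass transference theorem for hyperplanes with a general gauge $\ph\in\Phi$ (rather than a pure power) and a restricted index set $\Ncal_{i_{\star}}$: one must verify that the $T_{\alpha}\ph_{\star}$-neighborhoods of the hyperplanes $H_{k,n}$ with $n\in\Ncal_{i_{\star}}$ form a ubiquitous system in the quantitative sense required to feed into the large-intersection machinery of~\cite{Durand:2007uq}, a fact that the full Lebesgue measure of the enlarged $\limsup$ set ultimately buys us, but whose routine verification is precisely where the technical work sits.
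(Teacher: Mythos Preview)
Your proposal follows essentially the same route as the paper's own proof: reduce to a $\ph_{\star}\in\Phi_{\star}$, use the identity $T_{\alpha}\ph_{\star}(|a_{n}|^{1/\alpha})=\ph_{\star}(|a_{n}|^{1/\gamma_{a}})$ to identify the enlarged sets, invoke a hyperplane ubiquity/large-intersection theorem, and finish the second inclusion by exhibiting some $\ph\in\Phi_{\alpha}$ with $s_{\ph}=\alpha/\gamma_{a}$ (the paper uses $\ph(r)=T_{\alpha}\ph_{\star}(r)\log(1/T_{\alpha}\ph_{\star}(r))$, which is the same idea as your $|\log r|\cdot T_{\alpha}\ph_{\star}(r)$).

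The one point where you are too vague is exactly the one you flag as the ``main obstacle''. The paper does not appeal to the general ubiquity framework of~\cite{Durand:2007uq} but to Theorem~3.6 of~\cite{Durand:2008jk}, which is tailored to $\limsup$ sets of neighborhoods of affine hyperplanes. That theorem requires, besides the full-measure condition on the enlarged set, a \emph{uniform transversality} hypothesis: there must be a line $U$ such that the sets $\{x\in U:\dist(x,H_{k,n})<1\}$ have uniformly bounded diameter over the index set. This is precisely why the restriction to $\Ncal_{i_{\star}}$ is introduced: for $n\in\Ncal_{i_{\star}}$ one has $|n_{i_{\star}}|=|n|_{\infty}$, so the hyperplanes $H_{k,n}$ make a uniformly bounded angle with the $i_{\star}$-th coordinate axis $U_{i_{\star}}$, and the required diameter bound holds. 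You should state this condition explicitly rather than leaving it as ``routine verification''; once it is in place, Theorem~3.6 of~\cite{Durand:2008jk} yields the large intersection property directly, and your lifting from the $\Ncal_{i_{\star}}$-restricted $\limsup$ set to $L_{a}(\alpha)$ via the $G_{\delta}$-superset property is correct.
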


\begin{proof}
Let us consider a function $\ph\in\Phi_{\alpha}$, and a function $\ph_{\star}\in\Phi_{\star}$ for which $\ph\prec T_{\alpha}\ph_{\star}$. We shall make use of a ubiquity result, which enables one to deduce the large intersection properties of the set $L_{a}(\alpha)$ from the sole fact that a corresponding enlarged set, namely, one of the sets $L_{a}^{(\ph_{\star},1)}(\gamma_{a}),\ldots,L_{a}^{(\ph_{\star},d)}(\gamma_{a})$ has full Lebesgue measure in $\R^{d}$. Indeed, there exists an index $i$ such that Lebesgue-almost every point $x\in\R^{d}$ belongs to $L_{a}^{(\ph_{\star},i)}(\gamma_{a})$, {\em i.e.}~satisfies
\[
\dist(x,H_{k,n})<T_{\alpha}\ph_{\star}(|a_{n}|^{1/\alpha}) \qquad\mbox{ for i.m.}\quad (k,n)\in\Z\times\Ncal_{i}.
\]
Moreover, letting $U_{i}$ denote the line spanned by the $i$-th vector of the canonical basis of $\R^{d}$, we see that the hyperplanes $H_{k,n}$ are such that
\[
\sup_{n\in\Ncal_{i} \atop k\in\Z}\diam{\{x\in U_{i}\:|\: \dist(x,H_{k,n})<1\}}<\infty.
\]
We may therefore apply Theorem~3.6 in~\cite{Durand:2008jk} and deduce that $L_{a}(\alpha)\in\lic{d-1,\ph}{\R^{d}}$. To finish the proof, it suffices to consider a function $\ph\in\Phi_{\alpha}$ with $s_{\ph}=\alpha/\gamma_{a}$, and to recall that $\lic{d-1,\ph}{\R^{d}}\subseteq\licfalc{d-1+s_{\ph}}$. Such functions exist; as a matter of fact, one may take $\ph(r)=T_{\alpha}\ph_{\star}(r)\log(1/T_{\alpha}\ph_{\star}(r))$ where $\ph_{\star}$ is given by Lemma~\ref{lem:existphistar}.
\end{proof}

The above results being established, we are now in position to prove Theorems~\ref{thm:spec} and~\ref{thm:slisingsets}. This is the purpose of the last part of this section.

\subsection{End of the proof}

Recall that we have to establish the following properties: The iso-H\"older sets $E_{f}(h)$ and the singularity sets $E'_{f}(h)$ of the Davenport series $f$ have Hausdorff dimension equal to $d-1+h/\gamma_{a}$ in every nonempty open set. We also need to show that the latter sets belong to the classes $\varlicfalc{d-1+h/\gamma_{a}}$ when $h$ is positive. By virtue of Theorem~D in~\cite{Falconer:1994hx}, this implies that the singularity sets have packing dimension equal to $d$ in every nonempty open set, a feature that is also mentioned in the statement of Theorem~\ref{thm:slisingsets}. In view of various remarks written above, it only remains to consider the case where $h<\gamma_{a}$ and to establish the following three propositions.

\begin{prp}
For any real number $h\in[0,\gamma_{a})$,
\[
\max\{\Hdim E_{f}(h),\Hdim E'_{f}(h)\}\leq d-1+\frac{h}{\gamma_{a}}.
\]
\end{prp}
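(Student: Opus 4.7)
The plan is to apply Lemma~\ref{lem:EhLalpha} to reduce the problem to estimating $\Hdim\bigcap_{\alpha>h}L_{a}(\alpha)$, and then to obtain the pointwise dimension bound $\Hdim L_{a}(\alpha)\leq d-1+\alpha/\gamma_{a}$ by inserting the power gauge $\ph(r)=r^{s}$ into Lemma~\ref{lem:CShauzero} and reading off the admissible exponents $s$ from the critical-exponent characterization~(\ref{eq:defgammabis}) of $\gamma_{a}$.

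Concretely, I would fix $\alpha\in(h,\gamma_{a})$ and any $s\in(\alpha/\gamma_{a},1]$. The function $\ph(r)=r^{s}$ then belongs to $\Phi$, and since $\alpha/s<\gamma_{a}$, the characterization~(\ref{eq:defgammabis}) gives
\[
\sum_{n\in\Z^{d}}|n|\,|a_{n}|^{s/\alpha}<\infty.
\]
Lemma~\ref{lem:CShauzero} applied to $\ph(r)=r^{s}$ then yields $\hau^{d-1+s}(L_{a}(\alpha))=0$; letting $s$ decrease to $\alpha/\gamma_{a}$ produces $\Hdim L_{a}(\alpha)\leq d-1+\alpha/\gamma_{a}$. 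Since $\bigcap_{\alpha'>h}L_{a}(\alpha')\subseteq L_{a}(\alpha)$ for every such $\alpha$, letting $\alpha\downarrow h$ gives
\[
\Hdim\bigcap_{\alpha>h}L_{a}(\alpha)\leq d-1+\frac{h}{\gamma_{a}}.
\]

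To close the argument, I would feed this bound into the inclusion~(\ref{eq:lem:EhLalpha1}) supplied by Lemma~\ref{lem:EhLalpha}. The residual set $\disc{M(a)}\setminus\disc{J(a)}$ is a countable union of affine hyperplanes, hence has Hausdorff dimension at most $d-1\leq d-1+h/\gamma_{a}$, so the required bound transfers to $E'_{f}(h)$. For $E_{f}(h)$ with $h>0$, the fact that $f$ has H\"older exponent zero on its discontinuity set $\disc{J(a)}$ (recalled in the paragraph preceding Lemma~\ref{lem:EhLalpha}) forces $E_{f}(h)\cap\disc{J(a)}=\emptyset$, so the inclusion $E_{f}(h)\setminus\disc{J(a)}\subseteq E'_{f}(h)$ of~(\ref{eq:lem:EhLalpha1}) yields the same bound; for $h=0$, any additional piece from $\disc{J(a)}$ is itself a countable union of hyperplanes and is absorbed into the same $(d-1)$-dimensional ceiling.

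No step in this argument is really delicate: the whole upper bound reduces to pairing the critical-exponent description~(\ref{eq:defgammabis}) with the convergence hypothesis of Lemma~\ref{lem:CShauzero} for a power gauge. The genuinely hard matching lower bound $\Hdim\geq d-1+h/\gamma_{a}$ needed to complete Theorem~\ref{thm:spec}, together with the large intersection membership $E'_{f}(h)\in\varlicfalc{d-1+h/\gamma_{a}}$ required for Theorem~\ref{thm:slisingsets}, will be the part that consumes the real work and will rely on the ubiquity statement packaged in Lemma~\ref{lem:lipLalpha}.
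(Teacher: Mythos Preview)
Your argument is correct and follows essentially the same route as the paper: reduce via the inclusions~(\ref{eq:lem:EhLalpha1}) of Lemma~\ref{lem:EhLalpha} to bounding $\Hdim L_{a}(\alpha)$, then combine Lemma~\ref{lem:CShauzero} with the critical-exponent characterization~(\ref{eq:defgammabis}) to obtain $\Hdim L_{a}(\alpha)\leq d-1+\alpha/\gamma_{a}$, absorbing the hyperplane pieces $\disc{J(a)}$ and $\disc{M(a)}\setminus\disc{J(a)}$ into the $d-1$ term. You have merely made explicit the choice of power gauge $\ph(r)=r^{s}$ and the case split on $h=0$ versus $h>0$ that the paper leaves implicit.
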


\begin{proof}
We begin by making use of Lemma~\ref{lem:EhLalpha}. The two inclusions~(\ref{eq:lem:EhLalpha1}), combined with the fact that the sets $\disc{J(a)}$ and $\disc{M(a)}$ are countable unions of hyperplanes, imply that
\[
\left\{\begin{array}{l}
\Hdim E_{f}(h)\leq\max\{d-1,\Hdim E'_{f}(h)\} \\[2mm]
\Hdim E'_{f}(h)\leq\max\left\{d-1,\inf\limits_{\alpha>h}\Hdim L_{a}(\alpha)\right\}
\end{array}\right.
\]
To conclude, it suffices to apply Lemma~\ref{lem:CShauzero} which, along with the alternate expression~(\ref{eq:defgammabis}) of $\gamma_{a}$, ensures that the dimension of $L_{a}(\alpha)$ is bounded above by $d-1+\alpha/\gamma_{a}$.
\end{proof}

In the next statement, $\varlic{d-1,\ph}{\R^{d}}$ denotes the extended class of sets with large intersection that is defined in terms of the initial class $\lic{d-1,\ph}{\R^{d}}$ by the following condition: For all $E\subseteq\R^{d}$,
\[
E\in\varlic{d-1,\ph}{\R^{d}} \qquad\Longleftrightarrow\qquad \exists E'\in\lic{d-1,\ph}{\R^{d}} \quad E'\subseteq E.
\]
The purpose of this extension is to avoid checking that the singularity sets are $G_{\delta}$-sets, which is inessential here. It is easy to see that the extended class $\varlic{d-1,\ph}{\R^{d}}$ contains the initial class $\lic{d-1,\ph}{\R^{d}}$ and coincides with the latter on the $G_{\delta}$-sets, in view of~\cite[Proposition~1(e)]{Durand:2007uq}. Moreover, the extended class enjoys the same remarkable properties as the initial class: $\varlic{d-1,\ph}{\R^{d}}$ is closed under countable intersections and bi-Lipschitz mappings, and its members satisfy~(\ref{eq:sliinfhau}). Moreover, when $d-1+s_{\ph}$ is positive, the class $\varlic{d-1,\ph}{\R^{d}}$ is included in the corresponding extended version $\varlicfalc{d-1+s_{\ph}}$ of Falconer's class.

\begin{prp}\label{prp:slisingsets}
Let us consider a real number $h\in(0,\gamma_{a})$. Then,
\[
E'_{f}(h)\in\bigcap_{\ph\in\Phi_{h}}\varlic{d-1,\ph}{\R^{d}}\subseteq\varlicfalc{d-1+h/\gamma_{a}}.
\]
\end{prp}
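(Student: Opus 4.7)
The starting point is the inclusion $E'_{f}(h) \supseteq L_{a}(h) \setminus \disc{J(a)}$ furnished by Lemma~\ref{lem:EhLalpha}, combined with the fact that Lemma~\ref{lem:lipLalpha} already places $L_{a}(h)$ inside $\bigcap_{\ph\in\Phi_{h}} \lic{d-1,\ph}{\R^{d}}$. The whole task therefore reduces to transferring this membership from $L_{a}(h)$ to the smaller set $L_{a}(h) \setminus \disc{J(a)}$. Since the latter is automatically $G_{\delta}$---as the intersection of the $G_{\delta}$-set $L_{a}(h)$ with the complement of the $F_{\sigma}$-set $\disc{J(a)}$---establishing that $L_{a}(h) \setminus \disc{J(a)} \in \lic{d-1,\ph}{\R^{d}}$ for every $\ph \in \Phi_{h}$ immediately yields $E'_{f}(h) \in \varlic{d-1,\ph}{\R^{d}}$ by the very definition of the extended class.

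\textbf{Negligibility of the discontinuity set.} The key structural remark is that $\disc{J(a)}$, being a countable union of hyperplanes, is negligible with respect to every gauge of the form $r \mapsto r^{d-1}\psi(r)$ with $\psi \in \Phi$. Indeed, for a single hyperplane $H$ and a bounded open set $W$, covering $H \cap W$ by $\asymp r^{-(d-1)}$ Euclidean balls of radius $r$ produces a total cost $\asymp \psi(r)$, which tends to zero with $r$; hence $\hau^{d-1,\psi}(H \cap W) = 0$ for every $\psi \in \Phi$, and countable subadditivity gives $\hau^{d-1,\psi}(\disc{J(a)} \cap W) = 0$. Combining this negligibility with the Hausdorff-content description of the generalized large intersection classes developed in~\cite{Durand:2007uq}---which, suitably recast, ensures that membership in $\lic{d-1,\ph}{\R^{d}}$ is preserved under subtraction of a set that is $\hau^{d-1,\psi}$-null for every $\psi \prec \ph$---one concludes that $L_{a}(h) \setminus \disc{J(a)}$ inherits membership from $L_{a}(h)$.

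\textbf{Passage to Falconer's extended class.} Having secured $E'_{f}(h) \in \bigcap_{\ph \in \Phi_{h}}\varlic{d-1,\ph}{\R^{d}}$, the final inclusion follows by specializing $\ph$ to a function in $\Phi_{h}$ with $s_{\ph} = h/\gamma_{a}$---precisely the function built at the end of the proof of Lemma~\ref{lem:lipLalpha}---and then invoking the inclusion $\lic{d-1,\ph}{\R^{d}} \subseteq \licfalc{d-1+s_{\ph}}$ recalled just before that lemma, along with the formal passage from the original classes to the overlined ones. The main obstacle of the proof lies in the second paragraph: one has to extract from~\cite{Durand:2007uq} the precise characterization of $\lic{d-1,\ph}{\R^{d}}$ that makes the transfer of membership from $L_{a}(h)$ to $L_{a}(h) \setminus \disc{J(a)}$ rigorous, because the bare infiniteness of $\hau^{d-1,\psi}$-measure on every open set recalled in the paper is a priori weaker than the full class property and would not by itself allow such a subtraction step.
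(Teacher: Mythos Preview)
Your overall structure matches the paper's: use Lemma~\ref{lem:EhLalpha} to reduce to showing $L_a(h)\setminus\disc{J(a)}\in\lic{d-1,\ph}{\R^d}$ for each $\ph\in\Phi_h$, invoke Lemma~\ref{lem:lipLalpha} for $L_a(h)$, and then handle the removal of $\disc{J(a)}$. The passage to $\varlicfalc{d-1+h/\gamma_a}$ is also the same.

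The difference lies in how you handle the removal step. You argue via negligibility---showing $\hau^{d-1,\psi}(\disc{J(a)})=0$ for all $\psi\in\Phi$---and then want to invoke a ``subtraction stability'' of the classes $\lic{d-1,\ph}{\R^d}$, which you yourself flag as the main obstacle, requiring an unspecified characterization to be extracted from~\cite{Durand:2007uq}. The paper bypasses this entirely: it observes that $\R^d\setminus\disc{J(a)}$, being a $G_\delta$-set with full Lebesgue measure, belongs to \emph{every} class $\lic{g}{\R^d}$ by~\cite[Proposition~11]{Durand:2007uq}, and then uses the stability of these classes under countable intersection to conclude directly that $L_a(h)\cap(\R^d\setminus\disc{J(a)})\in\lic{d-1,\ph}{\R^d}$. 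This is both shorter and avoids the step you left unresolved. Your negligibility route could in principle be completed by going back to the net-measure definition of the classes in~\cite{Durand:2007uq}, but there is no need: the full-measure $G_\delta$ trick plus intersection stability gives the result immediately.
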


\begin{proof}
Lemma~\ref{lem:lipLalpha} ensures that the set $L_{a}(h)$ belongs to the classes $\lic{d-1,\ph}{\R^{d}}$ associated with the functions $\ph\in\Phi_{h}$. The same property holds for the set $\R^{d}\setminus\disc{J(a)}$; indeed, being the complement of a countable union of hyperplanes, this set is a $G_{\delta}$-set with full Lebesgue measure in $\R^{d}$, and such a set belongs to all the classes $\lic{g}{\R^{d}}$, see~\cite[Proposition~11]{Durand:2007uq}. Using the stability under intersection of the classes of sets with large intersection, we deduce that
\[
L_{a}(h)\setminus\disc{J(a)}\in\bigcap_{\ph\in\Phi_{h}}\lic{d-1,\ph}{\R^{d}}\subseteq\licfalc{d-1+h/\gamma_{a}},
\]
where the last inclusion also follows from Lemma~\ref{lem:lipLalpha}. To conclude, it suffices to make use of Lemma~\ref{lem:EhLalpha}, which ensures that $L_{a}(h)\setminus\disc{J(a)}$ is a subset of $E'_{f}(h)$, see the first inclusion in~(\ref{eq:lem:EhLalpha2}).
\end{proof}

Our last statement gives a lower bound on the Hausdorff dimension of the sets $E_{f}(h)$ and $E'_{f}(h)$ in every nonempty open subset of $\R^{d}$. Recall that $E_{f}(0)$ contains the set $\disc{J(a)}$, by virtue of Lemma~\ref{lem:EhLalpha}. As $\gamma_{a}$ is finite, the latter set is a dense countable union of hyperplanes, thereby having Hausdorff dimension at least $d-1$ in every nonempty open set. We may therefore restrict our attention to the positive values of $h$.

\begin{prp}
Let us consider a real number $h\in(0,\gamma_{a})$ and a nonempty open subset $W$ of $\R^{d}$. Then,
\[
\min\{\Hdim (E_{f}(h)\cap W),\Hdim (E'_{f}(h)\cap W)\}\geq d-1+\frac{h}{\gamma_{a}}.
\]
\end{prp}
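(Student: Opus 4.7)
For $E'_{f}(h)$, the lower bound is essentially immediate: Proposition~\ref{prp:slisingsets} places $E'_{f}(h)$ in the extended class $\varlicfalc{d-1+h/\gamma_{a}}$, and every element of that class has Hausdorff dimension at least $d-1+h/\gamma_{a}$ in every nonempty open set by definition of the extended Falconer class. So the real work lies in treating $E_{f}(h)$. For this I use the inclusion $E_{f}(h)\supseteq E'_{f}(h)\setminus N$ from Lemma~\ref{lem:EhLalpha}, where $N=\bigcup_{\alpha<h}L_{a}(\alpha)$. A pure dimension subtraction will not suffice: each $L_{a}(\alpha)$ with $\alpha<h$ satisfies $\Hdim L_{a}(\alpha)\leq d-1+\alpha/\gamma_{a}<d-1+h/\gamma_{a}$, but the supremum over a sequence $\alpha_{k}\uparrow h$ may equal the target value, so the set $N$ can a priori have full dimension $d-1+h/\gamma_{a}$. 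I therefore work with a generalised Hausdorff measure associated with a logarithmic gauge chosen to sit between the jump-operator threshold and the large intersection threshold.

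Concretely, let $\ph_{\star}\in\Phi_{\star}$ be the function supplied by Lemma~\ref{lem:existphistar}, and define
\[
\ph(r)=T_{h}\ph_{\star}(r)\log\bigl(1/T_{h}\ph_{\star}(r)\bigr),\qquad \psi(r)=T_{h}\ph_{\star}(r)\bigl(\log(1/T_{h}\ph_{\star}(r))\bigr)^{2}.
\]
Both $\ph$ and $\psi$ lie in $\Phi$ with $s_{\ph}=s_{\psi}=h/\gamma_{a}$, and $\psi\prec\ph$ since $\psi/\ph=\log(1/T_{h}\ph_{\star})\to\infty$ as $r\to 0$. First I show $\hau^{d-1,\psi}(N)=0$ by applying Lemma~\ref{lem:CShauzero} to each $L_{a}(\alpha)$ with $\alpha<h$ and invoking countable subadditivity along a sequence $\alpha=h-1/k$. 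The convergence $\sum_{n}|n|\,\psi(|a_{n}|^{1/\alpha})<\infty$ is verified as follows: since $s_{\ph_{\star}}=1$, for any $\eta>0$ one has $T_{h}\ph_{\star}(r)\leq r^{(1-\eta)h/\gamma_{a}}$ near the origin, so $\psi(|a_{n}|^{1/\alpha})\ll|a_{n}|^{(1-\eta)h/(\alpha\gamma_{a})}(\log|a_{n}|)^{2}$; choosing $\eta$ small enough so that $(1-\eta)h/\alpha>1$ (possible because $\alpha<h$) and absorbing the polylogarithmic factor by passing to a slightly smaller exponent $1/\gamma''$ with $\gamma''<\gamma_{a}$, the convergence then follows from~(\ref{eq:defgammabis}).

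Second, I establish $\hau^{d-1,\psi}(E'_{f}(h)\cap W)=\infty$. The gauge $\ph$ is precisely the one exhibited at the end of the proof of Lemma~\ref{lem:lipLalpha} to witness membership in $\Phi_{h}$, so Proposition~\ref{prp:slisingsets} gives $E'_{f}(h)\in\varlic{d-1,\ph}{\R^{d}}$; therefore $E'_{f}(h)$ contains a $G_{\delta}$-subset $E^{\ast}$ of the original class $\lic{d-1,\ph}{\R^{d}}$, and the defining property~(\ref{eq:sliinfhau}) applied with $\psi\prec\ph$ yields $\hau^{d-1,\psi}(E^{\ast}\cap W)=\infty$. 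Combining both computations, $\hau^{d-1,\psi}(E_{f}(h)\cap W)=\infty$. Since $s_{\psi}=h/\gamma_{a}$, the comparison $\psi(r)\leq r^{h/\gamma_{a}-\eps}$ valid for $r$ small gives $\hau^{d-1+h/\gamma_{a}-\eps}(E_{f}(h)\cap W)=\infty$ for every $\eps>0$, and letting $\eps\to 0$ yields $\Hdim(E_{f}(h)\cap W)\geq d-1+h/\gamma_{a}$. The main obstacle throughout is this delicate calibration of the gauge: $\psi$ must be coarse enough (larger than $\ph\in\Phi_{h}$) for the large intersection property of $E'_{f}(h)$ to produce infinite $\hau^{d-1,\psi}$-mass, yet fine enough that the convergence condition of Lemma~\ref{lem:CShauzero} still holds simultaneously for every $\alpha<h$.
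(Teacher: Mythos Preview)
Your proof is correct and follows essentially the same approach as the paper: the identical gauges $\ph(r)=T_{h}\ph_{\star}(r)\log(1/T_{h}\ph_{\star}(r))$ and $\psi(r)=T_{h}\ph_{\star}(r)(\log(1/T_{h}\ph_{\star}(r)))^{2}$ are chosen, Lemma~\ref{lem:CShauzero} is used to kill the $\hau^{d-1,\psi}$-mass of each $L_{a}(\alpha)$ with $\alpha<h$, Proposition~\ref{prp:slisingsets} supplies infinite $\hau^{d-1,\psi}$-mass for $E'_{f}(h)$ in every open set, and the inclusion from Lemma~\ref{lem:EhLalpha} transfers this to $E_{f}(h)$. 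Your write-up is simply more explicit about verifying the convergence hypothesis of Lemma~\ref{lem:CShauzero}, which the paper leaves implicit via the observation that $s_{\psi}=h/\gamma_{a}$ together with~(\ref{eq:defgammabis}).
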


\begin{proof}
It follows from Proposition~\ref{prp:slisingsets} that the singularity set $E'_{f}(h)$ satisfies~(\ref{eq:sliinfhau}) for any function $\psi\in\Phi$ such that $\psi\prec\ph$ for some $\ph\in\Phi_{h}$. Moreover, choosing a function $\psi$ for which $s_{\psi}=h/\gamma_{a}$, we also deduce from Lemma~\ref{lem:CShauzero} that all the sets $L_{a}(\alpha)$, for $\alpha<h$, have a vanishing $\hau^{d-1,\psi}$-mass. Such a function $\psi$ exists: It suffices to take $\psi(r)=T_{h}\ph_{\star}(r)\left(\log(1/T_{h}\ph_{\star}(r))\right)^{2}$, and also $\ph(r)=T_{h}\ph_{\star}(r)\log(1/T_{h}\ph_{\star}(r))$, where $\ph_{\star}$ is given by Lemma~\ref{lem:existphistar}. We finally get
\[
\hau^{d-1,\psi}\left(E_{f}(h)\cap W\right)\geq\hau^{d-1,\psi}\left(E'_{f}(h)\cap W\right)=\infty,
\]
where the first inequality is due to the second inclusion in~(\ref{eq:lem:EhLalpha2}), which appears in the statement of Lemma~\ref{lem:EhLalpha}. The result follows.
\end{proof}


\noindent {\em Acknowledgements.} 
The authors are grateful to Julien Br\'emont for pointing out a mistake in a first version of this paper, and to the anonymous referee for the careful reading and many valuable remarks.


\end{document}